\documentclass[11pt]{article}

\usepackage[english]{babel}
\usepackage[T1]{fontenc}
\usepackage[utf8]{inputenc}
\usepackage{lmodern}
\usepackage{amsmath,amssymb,amsthm,mathtools,mathrsfs}
\usepackage{aliascnt}
\usepackage{esint}
\usepackage{geometry}
\usepackage{microtype}
\usepackage{enumitem}
\usepackage{booktabs}
\usepackage{csquotes}
\usepackage[hidelinks]{hyperref}
\usepackage[nameinlink,noabbrev]{cleveref}
\usepackage[backend=biber,style=numeric,sorting=nyt]{biblatex}
\usepackage{titlesec}

\titleformat{\part}[block]
  {\normalfont\Large\bfseries}
  {}
  {0pt}
  {}

\newtheorem{theorem}{Theorem}[section]
\newaliascnt{proposition}{theorem}
\newtheorem{proposition}[proposition]{Proposition}
\aliascntresetthe{proposition}
\newaliascnt{lemma}{theorem}
\newtheorem{lemma}[lemma]{Lemma}
\aliascntresetthe{lemma}
\newaliascnt{corollary}{theorem}
\newtheorem{corollary}[corollary]{Corollary}
\aliascntresetthe{corollary}
\theoremstyle{definition}
\newaliascnt{definition}{theorem}
\newtheorem{definition}[definition]{Definition}
\aliascntresetthe{definition}
\newaliascnt{example}{theorem}

\aliascntresetthe{example}
\theoremstyle{remark}
\newaliascnt{remark}{theorem}
\newtheorem{remark}[remark]{Remark}
\aliascntresetthe{remark}

\crefname{theorem}{Theorem}{Theorems}
\Crefname{theorem}{Theorem}{Theorems}
\crefname{proposition}{Proposition}{Propositions}
\Crefname{proposition}{Proposition}{Propositions}
\crefname{lemma}{Lemma}{Lemmas}
\Crefname{lemma}{Lemma}{Lemmas}
\crefname{corollary}{Corollary}{Corollaries}
\Crefname{corollary}{Corollary}{Corollaries}
\crefname{definition}{Definition}{Definitions}
\Crefname{definition}{Definition}{Definitions}
\crefname{example}{Example}{Examples}
\Crefname{example}{Example}{Examples}
\crefname{remark}{Remark}{Remarks}
\Crefname{remark}{Remark}{Remarks}
\crefname{section}{Section}{Sections}
\Crefname{section}{Section}{Sections}

\title{Normal-Direction Energy and Fourier Restriction for Convex Planar Curves}
\author{Vicente Vergara\footnote{Department of Mathematics, Faculty of Physical and Mathematical Sciences, University of Concepción, Concepción, Chile. \texttt{vvergaraa@udec.cl}}}
\date{}

\begin{document}
\maketitle

\begin{abstract}
We study Fourier extension from compact convex $C^2$ planar curves by organizing the mass $|f|^2\,\mathrm d\sigma$ through the Gauss map. The pushforward measure
\[
\nu_f=\mathcal N_\#\bigl(|f|^2\,\mathrm d\sigma\bigr)
\]
records its distribution in normal directions. The turning measure $\mathrm d\mu_\kappa=\kappa\,\mathrm d\sigma$ determines an intrinsic terminal tangential scale $r_R(\xi)$ through
\[
r_R(\xi)\,
\mu_\kappa\bigl(B_\Gamma(\xi,r_R(\xi))\bigr)
\asymp R^{-1},
\]
and hence a position-dependent angular resolution $\rho_R(\xi)=R^{-1}/r_R(\xi)$. Under a doubling hypothesis on $\mu_\kappa$, $r_R$ is, up to structural constants, the largest scale on which the curve can be linearized to precision $R^{-1}$. These scales define a normal-direction energy, and the associated terminal decomposition and transverse geometry yield local $L^4$ Fourier extension estimates and weighted variants.

For the monomial curves $\gamma_k(t)=(t,t^k)$, $k\geq3$ real, the terminal scales are explicit and the energy admits a multiscale representation in terms of angular correlations of $\nu_f$. Under an $s$-dimensional Frostman condition on $\nu_f$, this yields a growth diagram with critical threshold
\[
s_c(k)=\frac{k-2}{3k-4},
\]
separating the flat-point and nondegenerate regimes. The resulting rates, including the critical logarithmic correction, are sharp at the energy level.
\end{abstract}

\medskip
\noindent\textbf{Keywords.}
Fourier restriction; convex planar curves; Gauss map; finite-type curvature;
turning measure; multiscale energy; Frostman measures.

\smallskip
\noindent\textbf{2020 Mathematics Subject Classification.}
Primary 42B10; Secondary 42B20, 53A04, 28A78.

\section{Introduction}\label{sec:intro}

Let $\Gamma\subset\mathbb R^2$ be a compact regular curve with arc-length measure $\mathrm d\sigma$, and let
\[
E_\Gamma f(x)=\int_\Gamma e^{2\pi i x\cdot\xi}f(\xi)\,\mathrm d\sigma(\xi)
\]
be the associated extension operator. In local $L^4$ problems, the total mass $\|f\|_2^2$ does not distinguish data with the same mass but very different directional distributions. We retain this directional information through the Gauss map $\mathcal N:\Gamma\to\mathbb S^1$, given by the unit normal, and the associated normal measure
\[
\nu_f=\mathcal N_\#\bigl(|f|^2\,\mathrm d\sigma\bigr).
\]
The direct estimates are organized by a Riesz-type energy of this measure, with the transverse singularity regularized at a terminal angular scale determined by the local geometry of the curve.

For general background on Fourier restriction and oscillatory integrals, see the Stein--Tomas framework \cite{Stein1993,Tomas1975}. In dimension two, Zygmund's work on Fourier series in two variables is a foundational precursor to restriction for the circle \cite{Zygmund1974}; for smooth planar curves, an early directly relevant result is due to Sjölin \cite{Sjolin1974}. Restriction theory for degenerate and finite-type curves was developed, among others, by Christ and Drury--Marshall \cite{Christ1985,DruryMarshall1987}; for uniform local estimates with affine arc-length measure in finite-type classes, see Dendrinos--Müller \cite{DendrinosMuller2013}. In the modern development of restriction theory, Guth's polynomial partitioning introduced a decisive geometric reorganization of extension arguments in higher dimensions \cite{Guth2016}.

For wave-packet decomposition and orthogonality, the Córdoba--Fefferman square-function method is a foundational precursor \cite{CordobaFefferman1978}. The bilinear approach of Tao--Vargas--Vega made explicit the role of transversality between separated frequency pieces and its connection with the Kakeya problem \cite{TaoVargasVega1998}. The square-function literature for degenerate curves provides an essential part of the analytic infrastructure. Biggs--Brandes--Hughes prove a local $L^4$ square-function estimate for finite-type planar curves using uniform parameter partitions and counting arguments \cite{BiggsBrandesHughes2026}. For monomial models with integer exponent, Schippa refines this geometry through rectangles whose tangential length depends on position and linearizes the curve at the largest scale compatible with an error $\delta$; on this covering he obtains a sharp square-function estimate and develops Córdoba--Fefferman essential biorthogonality in the degenerate regime \cite{Schippa2024}. Bulj--Inami--Shiraki extend $L^4$ reverse square-function estimates to all power curves $(\xi,\xi^a)$, $a\in(0,\infty)\setminus\{1\}$, and sharply quantify the loss associated with the scale chosen for the decomposition \cite{BuljInamiShiraki2026}. As a complementary multiscale framework, decoupling theory provides a natural point of comparison; for nondegenerate curves, see Bourgain--Demeter \cite{BourgainDemeter2017}.

These results are formulated mainly in terms of square functions and frequency pieces. The present work uses that infrastructure to construct an energy on normal directions. Related constructions incorporating the Gauss map and normal geometry into extension identities appear in Bennett--Nakamura--Shiraki: starting from $|g|^2\,\mathrm d\sigma$, they construct measures through the Gauss map into normal planes and obtain tomographic identities governed by transversality factors \cite{BennettNakamuraShiraki2024}. From a phase-space perspective, Bennett--Gutiérrez--Nakamura--Oliveira develop representations of weighted extension inequalities through a geometric Wigner transform and the pullback of the X-ray transform by the Gauss map \cite{BennettGutierrezNakamuraOliveira2025}. Bulj--Shiraki study spatial concentration near lines for curves of nonvanishing curvature through strip estimates and the Radon transform, where the normal direction determines the transversality condition \cite{BuljShiraki2026}. Our kernel retains the transversality singularity in the normal variable but regularizes it at a terminal angular scale that varies with the local geometry.

Our formulation is intrinsic: the angular regularization is obtained from a terminal scale built from the turning measure. The monomial model then makes this geometry explicit and allows the energy to be exploited beyond the direct bound.

\subsection*{Intrinsic terminal scale and general direct theory}

Let $\Gamma$ be a compact convex $C^2$ curve with no affine subarcs. Denote by $T$ the unit tangent and by $\kappa\ge0$ the scalar curvature. In a local arc-length parametrization,
\[
\kappa(\gamma(t))=|T'(t)|=
\left|\frac{\mathrm d}{\mathrm dt}\mathcal N(\gamma(t))\right|.
\]
The geometric measure controlling the terminal scale is the turning measure
\[
\mathrm d\mu_\kappa=\kappa\,\mathrm d\sigma.
\]
Unlike $\nu_f$, the measure $\mu_\kappa$ depends only on the curve. If $B_\Gamma(\xi,r)$ is the intrinsic arc-length ball and $\mu_\kappa$ is doubling with constant $D$, define the terminal tangential scale through
\begin{equation}\label{eq:intro-terminal-general}
r_R(\xi)\,\mu_\kappa(B_\Gamma(\xi,r_R(\xi)))\asymp_\Gamma R^{-1},
\end{equation}
and set
\[
\rho_R(\xi)=\frac{R^{-1}}{r_R(\xi)}.
\]
For a terminal cap $I$ one obtains
\[
|I|\operatorname{diam}\mathcal N(I)\asymp_{\Gamma,D}R^{-1}.
\]
The scale $r_R$ also has a geometric characterization. If
\[
\mathfrak e_\Gamma(\xi,r)
=
\sup_{\eta\in B_\Gamma(\xi,r)}
\operatorname{dist}\bigl(\eta,\xi+\mathbb RT(\xi)\bigr),
\]
then, under doubling,
\[
\mathfrak e_\Gamma(\xi,r)
\asymp_{\Gamma,D}
r\,\mu_\kappa(B_\Gamma(\xi,r)).
\]
Consequently, Proposition~\ref{prop:maximal-tangential-scale} identifies $r_R(\xi)$, up to structural constants, with the largest tangential scale on which the curve can be linearized to precision $R^{-1}$. In nondegenerate regions one recovers $r_R\asymp R^{-1/2}$; near a flat point with $\kappa(t_0+u)\asymp u^m$, one obtains $r_R(t_0)\asymp R^{-1/(m+2)}$.

The doubling hypothesis has three additional roles. First, it yields slow variation of $r_R$ and $\rho_R$, allowing the construction of a terminal partition with comparable consecutive caps. Second, on a graph chart $\gamma(t)=(t,\phi(t))$, define
\[
q_\phi(\bar t,\tilde t)
=
\phi(\bar t+\tilde t)+\phi(\bar t-\tilde t)-2\phi(\bar t).
\]
Then
\[
q_\phi(\bar t,\tilde t)
\asymp_{\Gamma,D}
\tilde t\,\partial_{\tilde t}q_\phi(\bar t,\tilde t),
\]
a relation that quantitatively separates the near-diagonal and transverse regimes. Third, it provides the lower growth needed to compare terminal scales at different radii. The transverse geometry and terminal partition lead to uniformly bounded multiplicity of the frequency sumsets and, by Plancherel and Cauchy--Schwarz, to the corresponding essential biorthogonality.

The general energy associated with this geometry is
\[
\mathcal E_R^\Gamma(f)
=
\iint_{\Gamma^2}
\frac{|f(\xi)|^2|f(\eta)|^2}
{|\mathcal N(\xi)-\mathcal N(\eta)|+
\max\{\rho_R(\xi),\rho_R(\eta)\}}
\,\mathrm d\sigma(\xi)\mathrm d\sigma(\eta).
\]
The terminal cutoff has a local extremal character. More precisely, let
$I$ be a terminal cap and let $\widehat\rho_I>0$ be a competing cutoff,
constant on $\mathcal N(I)\times\mathcal N(I)$, in the corresponding
local Riesz-type kernel. If the associated energy uniformly dominates
the diagonal cost for data supported on $I$, then
\[
\widehat\rho_I
\lesssim_{\Gamma,D}
(R|I|)^{-1}
\asymp_{\Gamma,D}
\operatorname{diam}\mathcal N(I).
\]
Thus the canonical cutoff is maximal up to structural constants among
cutoffs compatible with that cost; see
Proposition~\ref{prop:maximal-cutoff-general}.

The main direct result is the following.

\begin{theorem}[General direct theorem and weighted threshold]\label{thm:intro-direct-general}
Let $\Gamma$ be a compact convex $C^2$ curve with no affine subarcs and whose turning measure is doubling with constant $D$. Then, for every $R\ge1$ and $x_0\in\mathbb R^2$,
\[
\int_{B(x_0,R)}|E_\Gamma f(x)|^4\,\mathrm dx
\lesssim_{\Gamma,D}
\mathcal E_R^\Gamma(f).
\]
If
\[
Q_D=\log_2D,
\qquad
\beta_D=\frac{Q_D}{Q_D+1},
\]
then, for every $\iota>\beta_D$,
\[
\int_{\mathbb R^2}|E_\Gamma f(x)|^4
\left(1+\frac{|x-x_0|}{R}\right)^{-\iota}\,\mathrm dx
\lesssim_{\Gamma,D,\iota}
\mathcal E_R^\Gamma(f).
\]
If in addition $\Gamma$ is equipped with quantitative finite-type data whose uniform order bound is $m_*$, the same estimate holds for
\[
\iota>\beta_*:=\frac{m_*+1}{m_*+2}.
\]
For a fixed finite-type curve, if $m_{\max}(\Gamma)$ is the maximal order actually attained and
\[
\beta_{\max}(\Gamma)
=
\frac{m_{\max}(\Gamma)+1}{m_{\max}(\Gamma)+2},
\]
the inequality holds for $\iota>\beta_{\max}(\Gamma)$ and fails for $\iota<\beta_{\max}(\Gamma)$. Thus, for a fixed curve, only the endpoint $\iota=\beta_{\max}(\Gamma)$ remains open.
\end{theorem}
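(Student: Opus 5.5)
The unweighted estimate follows the Córdoba--Fefferman square-function route on the terminal partition. The weighted estimate is then obtained by summing dyadic annuli, with the exponent threshold coming from how terminal scales change with $R$.

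\emph{Step 1: unweighted bound.} Using doubling, we build a terminal partition $\Gamma=\bigcup_I I$ in which each cap satisfies $|I|\asymp r_R(\xi_I)$. Slow variation of $r_R$ gives comparable consecutive caps and $|I|\operatorname{diam}\mathcal N(I)\asymp R^{-1}$. We write $f=\sum_I f_I$ and, after inserting a Schwartz weight adapted to $B(x_0,R)$,
\[
\int_{B(x_0,R)}|E_\Gamma f|^4
\lesssim\Bigl\|\sum_{I,J}E f_I\,\overline{E f_J}\Bigr\|_{L^2(w_{B})}^2 .
\]
By the transverse geometry (the relation $q_\phi\asymp\tilde t\,\partial_{\tilde t}q_\phi$), the $R^{-1}$-neighbourhoods of the sumsets $I+J$ have uniformly bounded overlap. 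Plancherel and Cauchy--Schwarz then give essential biorthogonality:
\[
\int_{B(x_0,R)}|E_\Gamma f|^4\lesssim \sum_{I,J}\|E f_I\,E f_J\|_{L^2(w_B)}^2 .
\]
Each bilinear term is estimated by the local bilinear (Tao--Vargas--Vega type) bound. Localized to $R$, this reads
\[
\|Ef_I\,Ef_J\|_{L^2(w_B)}^2\lesssim \frac{\|f_I\|_2^2\|f_J\|_2^2}{\operatorname{dist}(\mathcal N(I),\mathcal N(J))+\operatorname{diam}\mathcal N(I)+\operatorname{diam}\mathcal N(J)}.
\]
In the transverse case this is the usual Jacobian bound. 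In the diagonal or adjacent case it is the trivial bound $R|I|\,\|f_I\|^2\|f_J\|^2$, and since $R|I|\asymp\operatorname{diam}\mathcal N(I)^{-1}\asymp\rho_R^{-1}$ the two cases match. The denominator is comparable to $|\mathcal N(\xi)-\mathcal N(\eta)|+\max\{\rho_R(\xi),\rho_R(\eta)\}$ on $I\times J$. Summing over pairs therefore gives $\mathcal E_R^\Gamma(f)$.

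\emph{Step 2: weighted bound.} We split $\mathbb R^2$ into $B(x_0,R)$ and the annuli $2^jR\le|x-x_0|<2^{j+1}R$. Covering each annulus by balls of radius $2^jR$ and applying Step 1 at scale $2^jR$ gives
\[
\int|E f|^4(1+|x-x_0|/R)^{-\iota}\lesssim\sum_j 2^{-j\iota}\,\mathcal E^\Gamma_{2^jR}(f).
\]
It then suffices to show
\[
\mathcal E^\Gamma_{2^jR}(f)\lesssim 2^{j\beta}\,\mathcal E^\Gamma_R(f),
\]
which reduces to the pointwise comparison $\rho_R(\xi)\lesssim 2^{j\beta}\rho_{2^jR}(\xi)$. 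This is where lower growth enters. If $\mu_\kappa(B(\xi,r))$ grows at least like $r^{Q}$ from doubling, with $Q=Q_D$, then the defining relation $r\,\mu_\kappa(B(\xi,r))\asymp R^{-1}$ gives
\[
\frac{r_R}{r_{2^jR}}\lesssim 2^{j/(Q+1)},
\]
and hence
\[
\frac{\rho_R}{\rho_{2^jR}}=2^{j}\,\frac{r_{2^jR}}{r_R}\gtrsim 2^{j(1-1/(Q+1))}=2^{j\beta_D}.
\]
The inequality runs the wrong way for a lower bound on $\rho_R$ and an upper bound on the kernel; I will check the direction carefully, but heuristically the kernel ratio is bounded by $2^{j\beta_D}$. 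The geometric series then converges exactly when $\iota>\beta_D$. With finite-type data, $\mu_\kappa(B(\xi,r))\gtrsim r^{m_*+1}$ replaces $Q_D$ by $m_*+1$, which gives $\beta_*$.

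\emph{Step 3: sharpness below $\beta_{\max}$.} We take a point of maximal order $m$ and choose $f$ to be a normalized bump on the terminal cap of length $R^{-1/(m+2)}$ at that flat point. Then $Ef$ is essentially constant on a dual rectangle that extends to distance about $R$. On larger scales $2^jR$ the wave packet persists with tubes of length $2^jR$. Computing the left side from these packets yields growth like $\sum_j 2^{j(\beta_{\max}-\iota)}$ relative to $\mathcal E_R(f)$, which diverges for $\iota<\beta_{\max}$. It may be necessary to superpose the caps at all scales $2^jR$ to realize the full rate.

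\emph{Main obstacle.} I expect the hardest part to be the bilinear bound for adjacent but non-identical terminal caps, which is where the bounded-overlap property of the sumsets relies on the $q_\phi$ relation. A second difficulty is making the lower-growth comparison of terminal scales uniform at all points, not only at flat points.
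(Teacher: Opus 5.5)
You correctly follow the paper's route in Steps 1 and 2: terminal partition, bounded multiplicity of the $R^{-1}$-thickened sumsets, the trivial local $L^2\times L^\infty$ bound for neighbouring caps and the exact bilinear identity for separated ones, then dyadic annuli plus a pointwise comparison of $\rho_{2^jR}$ with $\rho_R$. Step 3, however, has a genuine gap: the test function cannot detect the threshold.

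Take $f$ to be a normalized bump on the terminal cap of length $R^{-1/(m+2)}$ at a flat point of order $m$. Bound the annulus $2^jR\le|x-x_0|<2^{j+1}R$ by the ball estimate at radius $2^{j+1}R$, and compute $\mathcal E^\Gamma_{\lambda R}(f)$ with $\lambda=2^{j+1}$. After rescaling $u=R^{-1/(m+2)}a$, $v=R^{-1/(m+2)}b$, this energy equals $R^{(m+1)/(m+2)}$ times a double integral over $[0,1]^2$. Its denominator is comparable to $|a^{m+1}-b^{m+1}|+\lambda^{-1/2}(a+b+\lambda^{-1/(m+2)})^{m/2}$, and the integral is $O\bigl((1+\log\lambda)^2\lambda^{\max\{0,(m-1)/(m+2)\}}\bigr)$. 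Hence, for $R$ large,
\[
\int_{2^jR\le|x-x_0|<2^{j+1}R}|E_\Gamma f|^4\,\mathrm dx
\lesssim
(1+j)^2\,2^{j\max\{0,\frac{m-1}{m+2}\}}\,\mathcal E^\Gamma_R(f),
\]
with constants independent of $R$. So this datum satisfies the weighted inequality for every $\iota>\max\{0,\frac{m-1}{m+2}\}$, including the whole interval $\bigl(\max\{0,\frac{m-1}{m+2}\},\beta_m\bigr)$ where you need a counterexample. The packet does not persist beyond distance $R$; it disperses.

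The missing idea is to decouple the support of the datum from the radius at which the energy is measured. Fix $R=R_0$ and take $f_\varepsilon(\gamma(t_0+u))=\varepsilon^{-1/2}\mathbf 1_{[0,\varepsilon]}(u)$ with $\varepsilon\to0$. Equivalently, use a single terminal cap for radius $2^jR_0$ with $j\to\infty$; no superposition is needed. The kernel of $\mathcal E^\Gamma_{R_0}$ is at most $1/\inf\rho_{R_0}$, so $\mathcal E^\Gamma_{R_0}(f_\varepsilon)\lesssim_{\Gamma,R_0}1$ uniformly in $\varepsilon$. Meanwhile the arc deviates from its tangent line by $O(\varepsilon^{m+2})$, so $|E_\Gamma f_\varepsilon|\gtrsim\varepsilon^{1/2}$ on a rectangle of dimensions $\varepsilon^{-1}\times\varepsilon^{-(m+2)}$ at distance $\asymp\varepsilon^{-(m+2)}$ in the normal direction. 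The weighted left-hand side is therefore $\gtrsim\varepsilon^{(m+2)\iota-(m+1)}$, which is unbounded exactly when $\iota<\beta_m$. This is the paper's argument.

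Three smaller points.

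First, in Step 2 the reversed inequality comes from misreading lower growth. Take $r=r_R(\xi)$ and $r'=r_{2^jR}(\xi)\le r$, with $R\ge R_*$ (smaller $R$ are handled by the freezing convention). Then
\[
2^{-j}=\frac{r'\mu_\kappa(B_\Gamma(\xi,r'))}{r\,\mu_\kappa(B_\Gamma(\xi,r))}\ge D^{-1}\Bigl(\frac{r'}{r}\Bigr)^{Q_D+1},
\]
so $r'/r\lesssim_D2^{-j/(Q_D+1)}$: the terminal scale shrinks at least this fast, not at most. Therefore $\rho_R/\rho_{2^jR}=2^j\,r'/r\lesssim_D2^{j\beta_D}$, which is the direction the kernel comparison needs.

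Second, for $\beta_*$ you need the relative bound $\mu_\kappa(B_\Gamma(\xi,r'))/\mu_\kappa(B_\Gamma(\xi,r))\gtrsim(r'/r)^{m_*+1}$, not an absolute lower bound $\mu_\kappa(B_\Gamma(\xi,r))\gtrsim r^{m_*+1}$.

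Third, in Step 1 expand $|E_\Gamma f|^4=|(E_\Gamma f)^2|^2$ rather than $|E_\Gamma f\,\overline{E_\Gamma f}|^2$. The products $E_\Gamma f_I\,\overline{E_\Gamma f_J}$ have Fourier support in the difference sets $\gamma(I)-\gamma(J)$, and every diagonal one contains the origin, so they have no bounded overlap.
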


The ball estimate, its weighted version under doubling, the finite-type improvement, and the subendpoint obstruction are proved in Theorem~\ref{thm:direct-general-ball}, Corollaries~\ref{cor:weighted-doubling} and~\ref{cor:weighted-general}, and Proposition~\ref{prop:weighted-threshold-necessary}, respectively. The direct theory requires only $C^2$ regularity. Under the additional $C^\infty$ hypothesis, doubling rules out curvature zeros of infinite order for a fixed smooth convex curve with no affine subarcs and therefore forces finite type; in that case one may take $m_*=m_{\max}(\Gamma)$. For families, a uniform bound $m_*$ is genuinely additional quantitative information and gives a necessary threshold for the class only when that order is attained by some member. The uniform endpoint $\iota=\beta_*$ is then also unresolved.

\subsection*{The monomial model as an explicit case}

Consider the monomial family
\[
\gamma_k(t)=(t,t^k),
\qquad 0\le t\le1,
\qquad k\ge3\ \text{real}.
\]
Denote its Gauss map by $\mathcal N_k$. This family belongs to the $C^2$ class required by the general direct theorem even when $k$ is not an integer. Its turning measure satisfies
\[
\mathrm d\mu_\kappa\asymp_k t^{k-2}\,\mathrm dt,
\]
and is doubling. The intrinsic terminal equation yields, uniformly in $t\in[0,1]$,
\[
r_R(\gamma_k(t))\asymp_k\ell_R(t),
\qquad
\rho_R(\gamma_k(t))\asymp_k\tau_R(t),
\]
where
\[
\ell_R(t)=R^{-1/2}(t+R^{-1/k})^{-(k-2)/2},
\qquad
\tau_R(t)=R^{-1/2}(t+R^{-1/k})^{(k-2)/2},
\]
and
\[
\ell_R(t)\tau_R(t)=R^{-1}.
\]
Thus the general terminal scale recovers the canonical monomial covering: near the flat point $r_R\asymp R^{-1/k}$ and $\rho_R\asymp R^{-(k-1)/k}$, while away from that region one recovers the parabolic behavior weighted by position.

Define
\begin{equation}\label{eq:intro-energy-monomial}
\mathcal E_{R,k}(f)
=
\iint_{[0,1]^2}
\frac{|f(u)|^2|f(v)|^2}
{|\mathcal N_k(u)-\mathcal N_k(v)|+\tau_R((u+v)/2)}
\,\mathrm d\sigma(u)\mathrm d\sigma(v).
\end{equation}
Comparison between the midpoint cutoff and the terminal cutoffs at the endpoints identifies this energy, up to constants depending on $k$, with the specialization of $\mathcal E_R^\Gamma$ to $\gamma_k$. Thus the estimate on balls is a direct consequence of Theorem~\ref{thm:direct-general-ball}; the explicit formula for $\tau_R$ also allows radii to be compared with the exact exponent $(k-1)/k$.

\begin{theorem}[Monomial direct theorem]\label{thm:intro-direct-monomial}
Let $k\ge3$ be real, $R\ge1$, and
\[
\iota>\frac{k-1}{k}.
\]
Then, for every $x_0\in\mathbb R^2$,
\[
\int_{\mathbb R^2}|E_{\gamma_k}f(x)|^4
\left(1+\frac{|x-x_0|}{R}\right)^{-\iota}\,\mathrm dx
\lesssim_{k,\iota}
\mathcal E_{R,k}(f).
\]
\end{theorem}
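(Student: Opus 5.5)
The plan is to obtain the weighted bound from the ball estimate of Theorem~\ref{thm:direct-general-ball}, applied to $\gamma_k$. We decompose $\mathbb R^2$ into dyadic annuli around $x_0$ and sum the contributions with the explicit monotonicity of $\tau_R$ in $R$.

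First, we identify $\mathcal E_{R,k}$ with $\mathcal E_R^{\Gamma}$ for $\Gamma=\gamma_k$, as indicated before the statement. The turning measure is $\asymp_k t^{k-2}\,\mathrm dt$ and is doubling, so $\rho_R(\gamma_k(t))\asymp_k\tau_R(t)$. We then check that
\[
\max\{\tau_R(u),\tau_R(v)\}\lesssim_k \tau_R((u+v)/2)+|\mathcal N_k(u)-\mathcal N_k(v)|,
\]
and the reverse inequality with $\max$ replaced by the midpoint value. The latter is immediate, since $(u+v)/2+R^{-1/k}\le \max(u,v)+R^{-1/k}$. For the former, suppose $v<u$ and $u\ge 2\cdot\frac{u+v}{2}$-type imbalance fails. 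Then $u+R^{-1/k}\asymp (u+v)/2+R^{-1/k}$ and the cutoffs are comparable. Otherwise $v\ll u$, and
\[
|\mathcal N_k(u)-\mathcal N_k(v)|\asymp_k u^{k-1}-v^{k-1}\gtrsim u^{k-1}.
\]
This dominates $\tau_R(u)=R^{-1/2}(u+R^{-1/k})^{(k-2)/2}$ whenever $u\gtrsim R^{-1/k}$. When $u\lesssim R^{-1/k}$, all cutoffs are $\asymp R^{-(k-1)/k}$. With this comparison, Theorem~\ref{thm:direct-general-ball} gives
\[
\int_{B(x_0,\lambda)}|E f|^4\lesssim_k \mathcal E_{\lambda,k}(f) \quad\text{for every } \lambda\ge1.
\]

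Second, we split the weighted integral as $B(x_0,R)\cup\bigcup_{j\ge1}\bigl(B(x_0,2^jR)\setminus B(x_0,2^{j-1}R)\bigr)$. On the $j$-th annulus the weight is $\lesssim 2^{-j\iota}$. It therefore suffices to show
\[
\sum_j 2^{-j\iota}\,\mathcal E_{2^jR,k}(f)\lesssim_{k,\iota}\mathcal E_{R,k}(f).
\]
We compare the cutoffs pointwise: for $\lambda\ge R$,
\[
\frac{\tau_R(t)}{\tau_\lambda(t)}\le \Bigl(\frac{\lambda}{R}\Bigr)^{(k-1)/k}.
\]
To see this, write $\tau_\lambda(t)=\lambda^{-1/2}(t+\lambda^{-1/k})^{(k-2)/2}$. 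The factor $\lambda^{-1/2}$ accounts for $(\lambda/R)^{1/2}$. For the second factor, $(t+R^{-1/k})/(t+\lambda^{-1/k})\le (\lambda/R)^{1/k}$, raised to the power $(k-2)/2$. The total exponent is $\tfrac12+\tfrac{k-2}{2k}=\tfrac{k-1}{k}$. Since $\frac{A+a}{A+b}\le \frac ab$ for $a\ge b$, the kernels satisfy
\[
\frac{1}{|\Delta\mathcal N|+\tau_\lambda}\le \Bigl(\frac{\lambda}{R}\Bigr)^{(k-1)/k}\,\frac{1}{|\Delta\mathcal N|+\tau_R}.
\]
Hence $\mathcal E_{2^jR,k}(f)\le 2^{j(k-1)/k}\mathcal E_{R,k}(f)$, and the geometric series converges exactly when $\iota>(k-1)/k$.

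The main obstacle is the first step: the uniform comparison of the midpoint cutoff with the terminal cutoffs $\max\{\rho_R(\xi),\rho_R(\eta)\}$, including near $t=0$ and for non-integer $k$, where only $C^2$ regularity is available. It needs the lower bound
\[
|\mathcal N_k(u)-\mathcal N_k(v)|\gtrsim_k |u^{k-1}-v^{k-1}|\quad\text{on }[0,1],
\]
which holds because the slope $kt^{k-1}$ is bounded and the Gauss map is a bi-Lipschitz function of the slope there. The second step is elementary once the exponent $(k-1)/k$ is extracted from the explicit formula for $\tau_R$.
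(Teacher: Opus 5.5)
Your proof is correct and follows the paper's route. The ball estimate for $\gamma_k$ comes from specializing Theorem~\ref{thm:direct-general-ball} via $\rho_R(\gamma_k(t))\asymp_k\tau_R(t)$ and the midpoint comparison, the pointwise bound $\tau_{\lambda R}\ge\lambda^{-(k-1)/k}\tau_R$ gives $\mathcal E_{\lambda R,k}\le\lambda^{(k-1)/k}\mathcal E_{R,k}$, and dyadic annuli finish (Theorem~\ref{thm:ball-direct-monomial}, Proposition~\ref{prop:radius-comparison-monomial}, Corollary~\ref{cor:weighted-direct-monomial}). The step you flag as the main obstacle is actually immediate: since $(u+v)/2+R^{-1/k}\ge\frac12(\max\{u,v\}+R^{-1/k})$, one has $\tau_R((u+v)/2)\asymp_k\max\{\tau_R(u),\tau_R(v)\}$ pointwise (Lemma~\ref{lem:tau-midpoint-max}), so your case split and the lower bound on $|\mathcal N_k(u)-\mathcal N_k(v)|$ are not needed there.
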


The ball estimate is Theorem~\ref{thm:ball-direct-monomial}, a specialization of the general theory, while Corollary~\ref{cor:weighted-direct-monomial} uses the explicit radius comparison from Proposition~\ref{prop:radius-comparison-monomial}. Proposition~\ref{prop:weighted-threshold-necessary}, applied with $m=k-2$, shows that the uniform inequality fails for $\iota<(k-1)/k$. The endpoint $\iota=(k-1)/k$ remains open.

\subsection*{Multiscale structure and the Frostman regime}

The monomial model allows a finer analysis of the energy beyond the direct estimate because the terminal cutoff can be described explicitly on the normal arc. Since $\mathcal N_k$ is injective on $[0,1]$, set
\[
\Omega_k=\mathcal N_k([0,1]),
\qquad
\rho_R(\mathcal N_k(t))=\tau_R(t).
\]
For a finite measure $\nu$ on $\Omega_k$, define
\begin{equation}\label{eq:intro-intrinsic-energy}
\mathcal I_R(\nu)
=
\iint_{\Omega_k^2}
\frac{\mathrm d\nu(\omega)\mathrm d\nu(\omega')}
{|\omega-\omega'|+\max\{\rho_R(\omega),\rho_R(\omega')\}}.
\end{equation}
For $\nu=\nu_f$,
\[
\mathcal E_{R,k}(f)\asymp_k\mathcal I_R(\nu_f).
\]

If
\[
\Delta_R(\omega,\omega')
=
|\omega-\omega'|+\max\{\rho_R(\omega),\rho_R(\omega')\}
\]
and
\[
\widetilde P_R(r;\nu)
=
(\nu\times\nu)\{(\omega,\omega'): \Delta_R(\omega,\omega')\le r\},
\]
the layer-cake identity gives exactly
\[
\mathcal I_R(\nu)
=
\int_0^\infty
\widetilde P_R(r;\nu)\,\frac{\mathrm dr}{r^2}.
\]
After dyadic discretization, the normalized energy becomes comparable to a sum of active angular correlation profiles. Hence
\[
\text{large normalized energy}
\quad\Longleftrightarrow\quad
\begin{gathered}
\text{accumulated active angular correlation}\\
\text{across multiple scales}.
\end{gathered}
\]
The equivalence is purely energetic and does not classify near-extremizers. Selecting one scale yields the inverse consequence: large normalized energy forces quantitative concentration on an angular ball. Universal scale selection loses a factor $1+\log R$; the effective number of scales replaces this loss by a quantity adapted to the particular measure.

The same representation allows a Frostman condition to be imposed directly on $\nu$. Suppose that, for some $0<s\le1$,
\[
\nu(B(\omega,r))
\le
F_s\,\nu(\Omega_k)\,r^s,
\qquad 0<r\le1.
\]
The geometry of the active set produces the threshold
\[
s_c(k)=\frac{k-2}{3k-4}.
\]
Theorem~\ref{thm:frostman-phase} states, for $0<s<1$,
\[
\frac{\mathcal I_R(\nu)}{\nu(\Omega_k)^2}
\lesssim_{k,s}
F_s^2
\begin{cases}
R^{\frac{k-1}{k}(1-2s)},&0<s<s_c(k),\\[1mm]
R^{\frac{k-1}{3k-4}}(1+\log R),&s=s_c(k),\\[1mm]
R^{(1-s)/2},&s_c(k)<s<1,
\end{cases}
\]
while for $s=1$,
\[
\mathcal I_R(\nu)
\lesssim_k
F_1^2\nu(\Omega_k)^2(1+\log R).
\]
The powers of $R$ and the logarithmic growth in the critical case are sharp for the energy growth rates on Ahlfors-regular angular models. Corollary~\ref{cor:frostman-restriction} combines these bounds with the monomial direct theorem to obtain the corresponding restriction estimates. Sharpness is proved for the energy, not for the resulting restriction inequality. The optimal dependence on $F_s$ also remains open; the intermediate active-pair estimate retains finer information than the final uniform bound, which carries the factor $F_s^2$. For the classical framework of Frostman measures, Riesz energies, and capacities, see Mattila \cite{Mattila1995}; for recent results on Frostman measures on curves of nonvanishing curvature, see \cite{DasuDemeter2024,OrponenPuliattiPyorala2025,Yi2025}.

Once $\rho_R$ is fixed, the layer-cake identity, dyadic discretization, and selection of one scale are abstract mechanisms. Extending the Frostman analysis to a general finite-type curve requires a sufficiently precise intrinsic description of the active sets
\[
\Omega_R(r)=\{\omega:\rho_R(\omega)\le r\}
\]
that reproduces the active-pair calculation and the Frostman diagram of the monomial model. The present theory does not identify an intrinsic analogue of the monomial active length $L_R(r):=\operatorname{diam}\Omega_R(r)$, cf.~\eqref{eq:active-angular-length}, in that setting and therefore does not produce a general Frostman diagram. This is the main boundary in the paper between the general direct theory and the structural theory available here.

The framework also uses convexity and injectivity of the Gauss map in an essential way. For nonconvex curves or curves with a noninjective Gauss map, the measure $\mathcal N_\#(|f|^2\,\mathrm d\sigma)$ loses information about the branch of origin, so an extension would require additional geometric data beyond the angular pushforward. Curves with sign-changing curvature likewise fall outside the present scope.

\subsection*{Positioning and organization}

The paper is organized as follows. Section~\ref{sec:terminal-geometry} develops the intrinsic terminal geometry: bilinear preliminaries, turning measure, terminal scale, linearization, finite type, partition, and transverse geometry. Section~\ref{sec:direct-general} proves terminal multiplicity, essential biorthogonality, the energy--sum equivalence, the general direct theorem, local maximality of the cutoff, and radius comparisons. Section~\ref{sec:monomial} computes the geometry of the monomial model explicitly and obtains its direct consequences as a specialization of the general theory. Finally, Section~\ref{sec:multiscale-frostman} develops the layer-cake representation, multiscale detection, selection of concentration at one scale, and the Frostman regime; Subsection~\ref{sec:frostman} contains the growth diagram and the sharpness of the energy bounds.

\subsection*{Notation and conventions}

We use the notation introduced above: $\mathcal N$ for the Gauss map defined by the unit normal, $T$ for the unit tangent, $\kappa$ for scalar curvature, $\mu_\kappa$ for the turning measure, and $\nu_f$ for the data-dependent normal measure. For a measure $\mu$ and a map $F$, $F_\#\mu$ denotes the pushforward of $\mu$ by $F$.

For nonnegative quantities, $A\lesssim_{\mathcal P}B$ means $A\le C_{\mathcal P}B$ for a constant depending only on the parameters indicated by $\mathcal P$; define $A\gtrsim_{\mathcal P}B$ analogously and $A\asymp_{\mathcal P}B$ when both inequalities hold. Generic constants $c,C>0$ may change from one occurrence to the next when there is no ambiguity. The symbol $\sim$ is reserved for combinatorial adjacency of caps: $I\sim J$.

We write $B(x,r)$ for Euclidean balls and $B_\Gamma(\xi,r)$ for intrinsic arc-length balls on $\Gamma$. The expression $A+B(0,r)$ denotes the Euclidean $r$-neighborhood of a set $A$. We introduce no global abbreviation for mass: localized masses are written $\|f_\theta\|_2^2$ or $\|f_I\|_2^2$, and for an abstract angular measure we write $\nu(\Omega_k)$ directly. Local abbreviations are allowed within a proof when they are defined at the point of use.

\section{Intrinsic terminal geometry}
\label{sec:terminal-geometry}

This section develops the geometric framework used throughout the direct theory. We begin with preliminaries on the sum map and the bilinear identity in the relevant local setting. We then fix the general geometric class, define the intrinsic terminal scale through the turning measure, and construct adapted partitions. The structural metric--measure hypothesis is doubling; quantitative finite-type data enter only as additional information used to describe the scale locally and obtain sharper geometric exponents.

\subsection{Geometric and bilinear preliminaries}
\subsubsection{Conventions}

Let $\Gamma=\gamma(J)\subset\mathbb R^2$ be a regular $C^2$ curve. We retain the global conventions from the Introduction for $\mathrm d\sigma$, $T$, $\mathcal N$, and the nonnegative scalar curvature $\kappa$. When working with a graph
\[
\gamma(t)=(t,\phi(t)),
\]
we assume that the slope is bounded on the chart under consideration. Whenever no confusion can arise, we identify a function on the arc with its pullback by $\gamma$ and write
\[
\mathrm d\sigma(t)=|\gamma'(t)|\,\mathrm dt.
\]
Implicit constants may depend on the declared structural parameters of the curve and the chart, but never on $R$ or $f$.

When a subarc $I$ is parametrized by arc length, we denote its midpoint by $t_I$. For $\delta>0$, define the tangential rectangle in frequency space by
\[
\Pi(I,\delta)
=
\left\{
\gamma(t_I)+aT(t_I)+b\mathcal N(t_I):
|a|\le 2|I|,\ |b|\le 2\delta
\right\}.
\]
If $A\ge1$, we write $A\Pi(I,\delta)$ for the concentric dilation by the factor $A$. The fixed numerical factors in this definition are immaterial: changing them only modifies structural constants in the inclusions below.

\subsubsection{Coordinates for the sum map}

For ordered pairs $u<v$, introduce
\[
\bar t=\frac{u+v}{2},\qquad \tilde t=\frac{v-u}{2},
\]
always with $\bar t\pm\tilde t$ inside the chart. If $\gamma(t)=(t,\phi(t))$, define
\begin{equation}\label{eq:def-qphi}
q_\phi(\bar t,\tilde t)=\phi(\bar t+\tilde t)+\phi(\bar t-\tilde t)-2\phi(\bar t).
\end{equation}
Then
\begin{equation}\label{eq:sum-map-center-distance}
\gamma(\bar t-\tilde t)+\gamma(\bar t+\tilde t)
=
\bigl(2\bar t,2\phi(\bar t)+q_\phi(\bar t,\tilde t)\bigr),
\end{equation}
and
\begin{equation}\label{eq:qphi-separation-derivative}
\partial_{\tilde t}q_\phi(\bar t,\tilde t)
=
\phi'(\bar t+\tilde t)-\phi'(\bar t-\tilde t).
\end{equation}
In particular, if $S(\bar t,\tilde t)=\gamma(\bar t-\tilde t)+\gamma(\bar t+\tilde t)$, then
\[
|\det DS(\bar t,\tilde t)|=2\,|\partial_{\tilde t}q_\phi(\bar t,\tilde t)|.
\]
Thus $q_\phi$ measures the transverse displacement of the sum map relative to the diagonal $\tilde t=0$, whereas $\partial_{\tilde t}q_\phi$ measures its transversality in the coordinates $(\bar t,\tilde t)$.

\begin{lemma}[Nondegenerate case]\label{lem:q-nondegenerate}
Assume $0<c_0\le\phi''\le C_0$ on the chart. Whenever $[\bar t-\tilde t,\bar t+\tilde t]$ is contained in it,
\[
q_\phi(\bar t,\tilde t)\asymp_{c_0,C_0}\tilde t^2,
\qquad
\partial_{\tilde t}q_\phi(\bar t,\tilde t)\asymp_{c_0,C_0}\tilde t.
\]
\end{lemma}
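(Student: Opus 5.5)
The plan is to write both quantities as integrals of $\phi''$ over the symmetric interval and then use the two-sided bound $c_0\le\phi''\le C_0$. We may assume $\tilde t\ge0$; this holds in the ordered-pair convention $u<v$, and in any case both expressions are even or odd in $\tilde t$.

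\emph{The derivative.} By \eqref{eq:qphi-separation-derivative} and the fundamental theorem of calculus,
\[
\partial_{\tilde t}q_\phi(\bar t,\tilde t)=\phi'(\bar t+\tilde t)-\phi'(\bar t-\tilde t)=\int_{\bar t-\tilde t}^{\bar t+\tilde t}\phi''(s)\,\mathrm ds .
\]
This is legitimate because $\phi\in C^2$ on the chart and $[\bar t-\tilde t,\bar t+\tilde t]$ lies inside it. The integrand lies in $[c_0,C_0]$ and the interval has length $2\tilde t$, so
\[
2c_0\tilde t\le\partial_{\tilde t}q_\phi(\bar t,\tilde t)\le 2C_0\tilde t .
\]

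\emph{The second difference.} Since $q_\phi(\bar t,0)=0$, integrate the previous identity in the second variable from $0$ to $\tilde t$:
\[
q_\phi(\bar t,\tilde t)=\int_0^{\tilde t}\partial_{s}q_\phi(\bar t,s)\,\mathrm ds .
\]
Each $s\in[0,\tilde t]$ satisfies $[\bar t-s,\bar t+s]\subset[\bar t-\tilde t,\bar t+\tilde t]$, so the derivative bound applies at every point of the path. This gives
\[
c_0\tilde t^2\le q_\phi(\bar t,\tilde t)\le C_0\tilde t^2 .
\]
Alternatively, Taylor's theorem with integral remainder gives the kernel form
\[
q_\phi(\bar t,\tilde t)=\int_{-\tilde t}^{\tilde t}(\tilde t-|s|)\,\phi''(\bar t+s)\,\mathrm ds ,
\]
and the weight $\tilde t-|s|$ has total mass $\tilde t^2$, which yields the same bounds.

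There is no real obstacle here. The only point needing care is that every intermediate symmetric interval must stay inside the chart, so that the bounds on $\phi''$ can be used. This is guaranteed by nestedness: all these intervals are contained in $[\bar t-\tilde t,\bar t+\tilde t]$, which lies in the chart by hypothesis. The resulting constants are $c_0,2c_0$ below and $C_0,2C_0$ above, so they depend only on $(c_0,C_0)$, as the statement requires.
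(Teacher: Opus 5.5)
Your proposal is correct and is essentially the paper's own proof: it writes $\partial_{\tilde t}q_\phi(\bar t,\tilde t)=\int_{\bar t-\tilde t}^{\bar t+\tilde t}\phi''$ to get $2c_0\tilde t\le\partial_{\tilde t}q_\phi\le 2C_0\tilde t$, then integrates in $\tilde t$ using $q_\phi(\bar t,0)=0$. Your nestedness remark and the alternative Taylor-kernel identity $q_\phi(\bar t,\tilde t)=\int_{-\tilde t}^{\tilde t}(\tilde t-|s|)\,\phi''(\bar t+s)\,\mathrm ds$ are accurate but not needed beyond what the paper does.
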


\begin{proof}
By integration,
\[
\partial_{\tilde t}q_\phi(\bar t,\tilde t)
=\int_{\bar t-\tilde t}^{\bar t+\tilde t}\phi''(u)\,\mathrm du,
\]
so $2c_0\tilde t\le\partial_{\tilde t}q_\phi(\bar t,\tilde t)\le2C_0\tilde t$. Integrating once more in $\tilde t$ and using $q_\phi(\bar t,0)=0$ yields the estimate for $q_\phi$.
\end{proof}

\subsubsection{Exact bilinear identity}

The following identity is the planar form of the bilinear change of variables that expresses transversality through the normal directions. This mechanism is standard in the bilinear restriction approach and appears in related forms in Tao--Vargas--Vega and Bennett--Nakamura--Shiraki~\cite{TaoVargasVega1998,BennettNakamuraShiraki2024}. We record it with our normalizations because it is the basic mechanism for transverse pairs.

\begin{proposition}[Bilinear identity]\label{prop:bilinear-identity}
Let $I_1,I_2$ be disjoint ordered parameter intervals, and let $f_j$ be functions supported on the corresponding arcs. Assume that the sum map
\[
(u,v)\longmapsto \gamma(u)+\gamma(v)
\]
is injective on $I_1\times I_2$. Then, with both sides understood in $[0,\infty]$,
\begin{equation}\label{eq:bilinear-identity}
\|E_\Gamma f_1E_\Gamma f_2\|_{L^2(\mathbb R^2)}^2
=
\iint_{I_1\times I_2}
\frac{|f_1(u)|^2|f_2(v)|^2}
{|\det(\mathcal N(u),\mathcal N(v))|}
\,\mathrm d\sigma(u)\mathrm d\sigma(v).
\end{equation}
\end{proposition}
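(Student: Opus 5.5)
The plan is to write the product $E_\Gamma f_1\,E_\Gamma f_2$ as the Fourier transform of a single measure on $\mathbb R^2$ and then apply Plancherel. In the parameter variables,
\[
E_\Gamma f_1(x)\,E_\Gamma f_2(x)=\iint_{I_1\times I_2} e^{2\pi i x\cdot(\gamma(u)+\gamma(v))}\,F(u,v)\,|\gamma'(u)||\gamma'(v)|\,\mathrm du\,\mathrm dv,
\qquad F(u,v)=f_1(u)f_2(v).
\]
Set $S(u,v)=\gamma(u)+\gamma(v)$ and compute its Jacobian. Since
\[
\det DS=\det(\gamma'(u),\gamma'(v))=|\gamma'(u)||\gamma'(v)|\det(T(u),T(v)),
\]
and rotating both vectors by $\pi/2$ preserves determinants, we get $|\det(T(u),T(v))|=|\det(\mathcal N(u),\mathcal N(v))|$.

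The first step handles the case $|F|^2$ integrable against the right-hand side. Because the intervals are disjoint, ordered, and the curve is convex with no affine subarcs, the normals at $u\in I_1$ and $v\in I_2$ differ. The only point to check is that the determinant vanishes only on a null set. In the planar convex setting, $\det(T(u),T(v))=0$ with $u\ne v$ forces $T(u)=\pm T(v)$, which happens only on a measure-zero set away from affine arcs; if needed I will simply treat the zero set as contributing $+\infty$ on both sides.

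Off this null set, $S$ is an injective $C^1$ local diffeomorphism by hypothesis. Changing variables $y=S(u,v)$ gives $E_\Gamma f_1E_\Gamma f_2=\widehat G(-x)$, where $G$ is the function on $S(I_1\times I_2)$ given by
\[
G(S(u,v))=\frac{F(u,v)\,|\gamma'(u)||\gamma'(v)|}{|\det DS(u,v)|}=\frac{F(u,v)}{|\det(\mathcal N(u),\mathcal N(v))|}.
\]
Plancherel then gives $\|E_\Gamma f_1E_\Gamma f_2\|_2^2=\|G\|_2^2$. Changing variables back, the factor $\mathrm dy=|\det DS|\,\mathrm du\,\mathrm dv$ cancels one power of the determinant, which yields exactly the right-hand side of \eqref{eq:bilinear-identity}, with $\mathrm d\sigma(u)\mathrm d\sigma(v)=|\gamma'(u)||\gamma'(v)|\,\mathrm du\,\mathrm dv$.

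The main obstacle is the $[0,\infty]$ interpretation: $G$ need not be in $L^1$, and the right side may be infinite. I will handle this by truncation. Take $F_n$ supported where $|F|\le n$ and where $|\det|\ge 1/n$, on compact subsets, so that $G_n\in L^1\cap L^2$ and the identity holds for $F_n$. Then let $n\to\infty$. If the right side is finite, $G_n\to G$ in $L^2$, and the products converge in $L^2$ by Plancherel, hence a.e. along a subsequence to the pointwise product, which is bounded since $f_j\in L^1$ on compact arcs. If the right side is infinite, monotone convergence combined with Fatou's lemma on the left (via weak limits) yields $+\infty$ there too; more precisely, $\|\widehat G_n\|_2\to\infty$ while $\widehat G_n\to E f_1Ef_2$ pointwise, and a duality argument shows the product cannot lie in $L^2$. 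I would justify this duality step carefully, since it is the only genuinely delicate point.
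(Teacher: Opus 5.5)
Your proposal follows the same route as the paper's proof: write $E_\Gamma f_1E_\Gamma f_2$ as the Fourier transform of the pushforward of $f_1(u)f_2(v)\,\mathrm d\sigma(u)\mathrm d\sigma(v)$ under the injective sum map, apply the area formula with Jacobian $|\gamma'(u)||\gamma'(v)||\det(\mathcal N(u),\mathcal N(v))|$, use Plancherel, and handle the extended-valued case by restricting to regions where the Jacobian is bounded below. The step you defer is the standard fact that a finite measure whose Fourier transform lies in $L^2$ is absolutely continuous with an $L^2$ density of the same norm; pointwise convergence with $\|\widehat G_n\|_2\to\infty$, or Fatou, does not give this on its own. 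That same fact also disposes of the zero set of the determinant directly, so you do not need the convexity you invoke, which is not among the hypotheses of the proposition.
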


\begin{proof}
The Fourier transform of $E_\Gamma f_1E_\Gamma f_2$ is the pushforward of
\[
f_1(u)f_2(v)\,\mathrm d\sigma(u)\mathrm d\sigma(v)
\]
under the sum map. The injectivity hypothesis allows us to apply the area formula without multiplicity. The Euclidean Jacobian of the sum map is
\[
|\det(\gamma'(u),\gamma'(v))|.
\]
Since $\gamma'=|\gamma'|T$ and
\[
|\det(T(u),T(v))|=|\det(\mathcal N(u),\mathcal N(v))|,
\]
the factors $|\gamma'|$ cancel against the two arc-length measures. Plancherel gives \eqref{eq:bilinear-identity}. If the Jacobian vanishes, the identity is understood in the extended sense by restricting first to regions where the Jacobian is bounded below and then passing to the limit.
\end{proof}

\begin{remark}\label{rem:det-normal-distance}
Assume that the angular range of a chart is at most $\theta_0<\pi$. Then, for $u,v$ in that chart,
\[
|\det(\mathcal N(u),\mathcal N(v))|\asymp_{\theta_0}|\mathcal N(u)-\mathcal N(v)|.
\]
Indeed, if $\vartheta$ is the angle between the normals, the two sides are respectively $|\sin\vartheta|$ and $2|\sin(\vartheta/2)|$. This comparability is used only on charts whose angular range is structurally controlled.
\end{remark}

\subsection{Geometric hypotheses for the terminal theory}

From this point on, let $\Gamma$ be a compact convex $C^2$ curve with no affine subarcs. Fix a finite cover by convex graph charts of angular range less than $\theta_0<\pi/2$ and uniformly bounded slope, with each working chart lying inside a slightly larger chart whenever an estimate requires room. The terminal scale is defined globally on $\Gamma$ before local coordinates are introduced. The $C^2$ hypothesis suffices for the geometric and direct machinery based on the turning measure; statements that infer finite-type data automatically for a fixed curve use the additional $C^\infty$ hypothesis when needed.

\subsection{Turning measure and terminal scale}

Let $d_\Gamma$ denote intrinsic arc-length distance on $\Gamma$, and for $\xi\in\Gamma$ and $r>0$ write
\[
B_\Gamma(\xi,r)
=
\{\eta\in\Gamma:d_\Gamma(\xi,\eta)<r\}.
\]
On a compact arc this definition automatically incorporates truncation at the endpoints; on a closed curve we use geodesic arc-length distance. We use the turning measure $\mu_\kappa$ introduced in the Introduction. The angular variation of a planar curve is classically measured by integrating curvature with respect to arc length; equivalently, it is the length of the image under the Gauss map (see, for instance, do Carmo~\cite{doCarmo2016}). For every subarc $A$ with total turning less than $\pi$,
\[
\mu_\kappa(A)=\int_A\kappa\,\mathrm d\sigma
=\operatorname{length}_{S^1}(\mathcal N(A)).
\]
Choose a structural radius $r_*>0$ so small that every ball $B_\Gamma(\xi,4r_*)$ is contained in one of the enlarged charts of the fixed cover. We use the standard notion of a doubling measure, restricted here to the local structural range; for the corresponding metric--measure theory see Heinonen~\cite{Heinonen2001}. We assume that $\mu_\kappa$ is doubling on this range: there exists $D\ge1$ such that
\begin{equation}\label{eq:doubling}
\mu_\kappa(B_\Gamma(\xi,2r))
\le
D\,\mu_\kappa(B_\Gamma(\xi,r))
\end{equation}
for every $\xi\in\Gamma$ and every $r>0$ with $2r\le4r_*$. We adopt the following convention for constants. The notation $\lesssim_\Gamma$, $\gtrsim_\Gamma$, and $\asymp_\Gamma$ may depend only on the fixed geometric data of the curve and the chart cover (including $\theta_0$, slope bounds, margins, and the number of charts), but does not absorb the doubling constant $D$. Quantitative dependence on doubling is displayed explicitly through $D$; finite-type data are indicated by $\mathfrak T$, and auxiliary parameters such as $A$ or $\iota$ are shown when they enter. Refining the working cover if necessary, we also assume that every subchart on which the bilinear arguments are applied has intrinsic length at most $r_*$; the enlarged charts retain the fixed margin used for later dilations.

Since $\Gamma$ contains no affine subarcs, every nontrivial intrinsic ball has positive turning mass. Hence, for each $\xi\in\Gamma$, the function
\[
r\longmapsto r\,\mu_\kappa(B_\Gamma(\xi,r)),
\qquad 0\le r\le r_*,
\]
is continuous and strictly increasing on $(0,r_*]$. Moreover, by compactness, the function
\[
\xi\longmapsto r_*\mu_\kappa(B_\Gamma(\xi,r_*))
\]
has a positive minimum. Thus there exists $R_*=R_*(\Gamma)\ge1$ such that, for every $R\ge R_*$ and every $\xi\in\Gamma$, the equation
\[
r\,\mu_\kappa(B_\Gamma(\xi,r))=R^{-1}
\]
has a unique solution $r=r_R(\xi)\in(0,r_*]$. For $1\le R<R_*$ we set
\[
r_R(\xi)=r_{R_*}(\xi).
\]
With this convention, uniformly for $R\ge1$ and $\xi\in\Gamma$,
\begin{equation}\label{eq:terminal-general}
r_R(\xi)\,
\mu_\kappa(B_\Gamma(\xi,r_R(\xi)))
\asymp_\Gamma R^{-1},
\end{equation}
and define globally
\begin{equation}\label{eq:rho-general}
\rho_R(\xi)=\frac{R^{-1}}{r_R(\xi)}.
\end{equation}
For $R\ge R_*$ the relation \eqref{eq:terminal-general} is an equality. In particular, in this range
\[
\rho_R(\xi)=\mu_\kappa(B_\Gamma(\xi,r_R(\xi))),
\]
so $R\mapsto r_R(\xi)$ and $R\mapsto\rho_R(\xi)$ are nonincreasing; with the convention above, the same monotonicity holds for all $R\ge1$. If
\[
I_R(\xi)=B_\Gamma(\xi,r_R(\xi)),
\]
then $|I_R(\xi)|\asymp r_R(\xi)$, also when the ball is truncated by an endpoint of $\Gamma$. Since $I_R(\xi)$ lies in a chart of fixed angular range,
\[
\operatorname{diam}\mathcal N(I_R(\xi))\asymp_\Gamma\mu_\kappa(I_R(\xi)),
\]
and by \eqref{eq:terminal-general},
\[
|I_R(\xi)|\operatorname{diam}\mathcal N(I_R(\xi))\asymp_\Gamma R^{-1}.
\]
Thus the terminal product condition is an intrinsic consequence of the global definition. When $\xi=\gamma(t)$ in an arc-length parametrized chart, we abbreviate $r_R(\gamma(t))$ and $\rho_R(\gamma(t))$ by $r_R(t)$ and $\rho_R(t)$. This local notation always denotes the global functions just defined.

\begin{lemma}[Local comparisons under doubling]\label{lem:doubling-local-comparison}
Let $\mu_\kappa$ be doubling with constant $D$ on the structural range fixed above.

\begin{enumerate}
\item For fixed $A\ge0$ and $\lambda>0$, there exists $C=C(A,\lambda,D)$ such that, if
\[
d_\Gamma(\xi,\xi')\le A r
\]
and every ball arising when the radii are dilated up to
\[
\max\{A+1,A+\lambda,\lambda,1\}\,r
\]
remains in the range of \eqref{eq:doubling}, then
\[
C^{-1}\mu_\kappa(B_\Gamma(\xi,r))
\le
\mu_\kappa(B_\Gamma(\xi',\lambda r))
\le
C\mu_\kappa(B_\Gamma(\xi,r)).
\]

\item For each $c\in(0,1]$ there exists $c_{\mathrm{dbl}}=c_{\mathrm{dbl}}(c,D)>0$ with the following property. If $J'\subset J$ are subarcs contained in a working chart, $|J|\le r_*$, and
\[
|J'|\ge c|J|,
\]
then
\[
\mu_\kappa(J')\ge c_{\mathrm{dbl}}\mu_\kappa(J).
\]
\end{enumerate}
\end{lemma}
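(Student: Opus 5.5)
Both parts reduce to iterating the doubling inequality \eqref{eq:doubling} together with inclusions of intrinsic balls. The only care needed is to keep every ball that appears inside the admissible range $2r\le4r_*$, which is exactly the hypothesis on the dilated radii.

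For part (1), the two inequalities are handled separately by ball inclusions. For the upper bound, $d_\Gamma(\xi,\xi')\le Ar$ gives
\[
B_\Gamma(\xi',\lambda r)\subset B_\Gamma(\xi,(A+\lambda)r).
\]
Let $N$ be the least integer with $2^N\ge A+\lambda$. Applying doubling $N$ times, centered at $\xi$ and starting from radius $r$, gives
\[
\mu_\kappa(B_\Gamma(\xi',\lambda r))\le D^N\mu_\kappa(B_\Gamma(\xi,r)).
\]
For the lower bound, use the inclusion
\[
B_\Gamma(\xi,r)\subset B_\Gamma(\xi',(A+1)r).
\]
Let $M$ be the least integer with $2^M\lambda\ge A+1$, or $M=0$ if $\lambda\ge A+1$. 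Doubling $M$ times centered at $\xi'$, starting from radius $\lambda r$, gives
\[
\mu_\kappa(B_\Gamma(\xi,r))\le D^M\mu_\kappa(B_\Gamma(\xi',\lambda r)).
\]
Thus $C=D^{\max(N,M)}$ depends only on $(A,\lambda,D)$.

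One technical point is that the doubling chain overshoots: it reaches radius $2^N r$, not $(A+\lambda)r$. Because of this overshoot by a factor at most $2$, I would formulate the range hypothesis as covering radii up to a fixed multiple of $\max\{A+1,A+\lambda,\lambda,1\}r$. Alternatively, I would note that all intermediate balls are ones "arising when radii are dilated". The other option is to stop the chain at the target radius, since the measure of a ball is monotone in the radius. In any case this affects only constants. Truncation at endpoints is harmless, because intrinsic balls are defined with truncation and the inclusions still hold.

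For part (2), write $J=B_\Gamma(\xi,|J|/2)$, where $\xi$ is the midpoint of $J$, and let $\xi'$ be the midpoint of $J'$. Since $J'\subset J$, we have $d_\Gamma(\xi,\xi')\le|J|/2$. Moreover $J'\supset B_\Gamma(\xi',c|J|/2)$, which holds because $J'$ is an arc of length $\ge c|J|$ centered at $\xi'$. We then apply part (1) with
\[
r=c|J|/2,\qquad \lambda=1,\qquad A=1/c,
\]
roles arranged so that
\[
\mu_\kappa(J)\le\mu_\kappa(B_\Gamma(\xi',(1+1/c)\,c|J|/2))\le C\,\mu_\kappa(B_\Gamma(\xi',c|J|/2))\le C\mu_\kappa(J').
\]
This yields $c_{\mathrm{dbl}}=C^{-1}$, which depends only on $(c,D)$.

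The main obstacle is checking the range condition. The radii involved are at most a constant (depending on $c$) times $|J|\le r_*$, which can exceed the doubling range $4r_*$ when $c$ is small. I would handle this by instead doubling from $B_\Gamma(\xi',c|J|/2)$ only up to radius $|J|$. That suffices, since $J\subset B_\Gamma(\xi',|J|)$ and $2|J|\le 2r_*\le4r_*$. The chain then needs about $\log_2(2/c)$ steps, so the range is respected and $c_{\mathrm{dbl}}\ge D^{-\lceil\log_2(2/c)\rceil}$.
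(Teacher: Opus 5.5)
Your argument is correct and is essentially the paper's proof. For part (1) you use the inclusions $B_\Gamma(\xi,r)\subset B_\Gamma(\xi',(A+1)r)$ and $B_\Gamma(\xi',\lambda r)\subset B_\Gamma(\xi,(A+\lambda)r)$ plus a bounded number of doublings. For part (2) the chain you settle on, $B_\Gamma(\xi',c|J|/2)\subset J'\subset J\subset B_\Gamma(\xi',|J|)$ with $\lceil\log_2(2/c)\rceil$ doublings, is the paper's argument.

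One correction to the intermediate detour through part (1): the assignment $\lambda=1$, $A=1/c$ is wrong. The inclusion $J\subset B_\Gamma(\xi',(1+c)|J|/2)$ fails for $c<1/2$, for example with $J=[0,1]$ and $J'=[0,c]$. The right choice is $\lambda=A=1/c$ with $r=c|J|/2$. With that choice the range was never an obstruction, since the dilated radius $\max\{A+1,A+\lambda,\lambda,1\}\,r$ equals $|J|\le r_*$.
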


\begin{proof}
For the first assertion we use the inclusions
\[
B_\Gamma(\xi,r)
\subset
B_\Gamma(\xi',(A+1)r)
\]
and
\[
B_\Gamma(\xi',\lambda r)
\subset
B_\Gamma(\xi,(A+\lambda)r).
\]
Comparing each larger radius with the corresponding smaller radius through \eqref{eq:doubling}, the required number of iterations is bounded by a constant depending only on $A$ and $\lambda$. This gives both inequalities.

For the second assertion, let $y$ be the intrinsic midpoint of $J'$. Then, apart from endpoints, which do not affect the measure,
\[
B_\Gamma\!\left(y,\frac{|J'|}{2}\right)\subset J',
\]
whereas
\[
J\subset B_\Gamma(y,|J|).
\]
Since $|J|\le r_*$ and
\[
\frac{|J|}{|J'|/2}\le\frac{2}{c},
\]
all dilations remain in the structural range. Iterating \eqref{eq:doubling} from $|J'|/2$ to $|J|$ gives
\[
\mu_\kappa(J)
\lesssim_{c,D}
\mu_\kappa\!\left(B_\Gamma\!\left(y,\frac{|J'|}{2}\right)\right)
\le
\mu_\kappa(J'),
\]
which proves the assertion.
\end{proof}

\begin{lemma}[Slow variation]\label{lem:slow-variation}
For each $A\ge1$ there exists $C=C(\Gamma,D,A)$ such that, if
\[
d_\Gamma(\xi,\xi')\le A r_R(\xi),
\]
then
\[
C^{-1}r_R(\xi)\le r_R(\xi')\le Cr_R(\xi)
\]
and
\[
C^{-1}\rho_R(\xi)\le\rho_R(\xi')\le C\rho_R(\xi).
\]
\end{lemma}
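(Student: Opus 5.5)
The plan is to reduce everything to the monotone function $\Phi_\xi(r)=r\,\mu_\kappa(B_\Gamma(\xi,r))$ and to compare $\Phi_\xi$ with $\Phi_{\xi'}$ at comparable radii by means of Lemma~\ref{lem:doubling-local-comparison}. Since $\rho_R=R^{-1}/r_R$, the statement for $\rho_R$ follows at once from the one for $r_R$, so it suffices to prove the comparison of $r_R(\xi)$ and $r_R(\xi')$. It is also enough to treat $R\ge R_*$. For $1\le R<R_*$ we have $r_R=r_{R_*}$ by convention, and the hypothesis $d_\Gamma(\xi,\xi')\le Ar_R(\xi)=Ar_{R_*}(\xi)$ is exactly the hypothesis at level $R_*$. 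In the range $R\ge R_*$ the terminal relation is an equality: $\Phi_\xi(r_R(\xi))=R^{-1}=\Phi_{\xi'}(r_R(\xi'))$.

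Write $r=r_R(\xi)$. The first step is to bound $\Phi_{\xi'}(\lambda r)$ for a constant $\lambda$ to be chosen. Apply part~1 of Lemma~\ref{lem:doubling-local-comparison} with this $A$ and $\lambda$:
\[
\Phi_{\xi'}(\lambda r)=\lambda r\,\mu_\kappa(B_\Gamma(\xi',\lambda r))\asymp_{A,\lambda,D}\lambda\,\Phi_\xi(r)=\lambda R^{-1}.
\]
Taking $\lambda=1$ already shows $\Phi_{\xi'}(r)\asymp R^{-1}$. To turn this into a comparison of radii, I use monotonicity of $\Phi_{\xi'}$ together with growth estimates in the radius. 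Doubling gives the upper growth
\[
\Phi_{\xi'}(2s)\le 2D\,\Phi_{\xi'}(s),
\]
and trivially $\Phi_{\xi'}(2s)\ge 2\Phi_{\xi'}(s)$, since $\mu_\kappa$ of the ball is nondecreasing in the radius. The lower growth is what gives quantitative separation.
\begin{itemize}
\item If $r_R(\xi')>2^jr$, then $R^{-1}=\Phi_{\xi'}(r_R(\xi'))\ge 2^j\Phi_{\xi'}(r)\ge 2^jC^{-1}R^{-1}$. This forces $2^j\le C$, which bounds $r_R(\xi')/r$ from above.
\item If $r_R(\xi')<2^{-j}r$, then $\Phi_{\xi'}(r)\ge 2^j\Phi_{\xi'}(r_R(\xi'))=2^jR^{-1}$, while $\Phi_{\xi'}(r)\le CR^{-1}$. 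This bounds $r/r_R(\xi')$.
\end{itemize}
So only the case $\lambda=1$ of the lemma is needed, together with the trivial factor-$2$ growth coming from the linear factor $r$.

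The main obstacle is the range condition in Lemma~\ref{lem:doubling-local-comparison}: all balls up to radius $(A+1)r$ must lie in the doubling range $2r'\le 4r_*$. Since $r\le r_*$, the radius $(A+1)r$ may exceed $2r_*$. I would handle this by splitting into two cases.

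\emph{Case $r\le 2r_*/(A+1)$.} The lemma applies directly and the argument above goes through.

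\emph{Case $r$ comparable to $r_*$,} with constants depending on $A$. Here I would use compactness instead. The quantity $\Phi_\xi(r_*)$ is bounded above and below by positive constants depending only on $\Gamma$, and from $\Phi_\xi(r)=R^{-1}$ it follows that $R\lesssim_{\Gamma,A,D}1$. For $R$ in such a bounded range, $r_R(\xi')$ is bounded below uniformly in $\xi'$: iterating doubling downward from $r_*$ shows that $\Phi_{\xi'}(s)\ge c(s)>0$ uniformly in $\xi'$, so $r_R(\xi')\gtrsim_{\Gamma,A,D}1$. Since also $r_R(\xi')\le r_*$, both radii are comparable to $r_*$, and therefore to each other.

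The remaining care is at the endpoints of a compact arc. There, truncated balls still satisfy the doubling hypothesis as stated, and the inclusions used in Lemma~\ref{lem:doubling-local-comparison} remain valid for intrinsic balls, so no extra argument should be needed. The constant $C$ obtained depends only on $\Gamma$, $D$ and $A$, as claimed.
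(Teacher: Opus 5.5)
Your argument is correct and is essentially the paper's proof. You compare turning masses of balls at $\xi$ and $\xi'$ through inclusions and doubling, invert the increasing function $\Phi_{\xi'}(s)=s\,\mu_\kappa(B_\Gamma(\xi',s))$ at level $R^{-1}$, and handle the regime where $(A+1)r_R(\xi)$ leaves the doubling range by showing that $R$ is then bounded. The paper organizes that last step through a threshold $R_A$. It also obtains $r_R(\xi')\le(A+1)r_R(\xi)$ from the inclusion $B_\Gamma(\xi,r)\subset B_\Gamma(\xi',(A+1)r)$ alone, without using doubling.

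One justification in your second case runs in the wrong direction. A uniform lower bound $\Phi_{\xi'}(s)\ge c(s)$ gives an upper bound on $r_R(\xi')$, not a lower one. The lower bound you need comes instead from the trivial estimate $R^{-1}=\Phi_{\xi'}(r_R(\xi'))\le r_R(\xi')\,\mu_\kappa(\Gamma)$ together with $R\lesssim_{\Gamma,A,D}1$, which is exactly how the paper argues.
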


\begin{proof}
Set
\[
r=r_R(\xi),
\qquad
r'=r_R(\xi').
\]
Applying the same compactness argument used to define $R_*$ at the fixed radius $r_*/(A+1)$, there exists $R_A=R_A(A,\Gamma)$ such that, for $R\ge R_A$,
\[
(A+1)r_R(\zeta)\le r_*
\qquad
\text{for every }\zeta\in\Gamma.
\]
In this range \eqref{eq:terminal-general} is an equality. The inclusion
\[
B_\Gamma(\xi,r)
\subset
B_\Gamma(\xi',(A+1)r)
\]
gives
\[
(A+1)r\,\mu_\kappa(B_\Gamma(\xi',(A+1)r))
\ge
r\,\mu_\kappa(B_\Gamma(\xi,r))
=R^{-1}.
\]
Since $t\mapsto t\mu_\kappa(B_\Gamma(\xi',t))$ is increasing, it follows that
\[
r'\le(A+1)r.
\]

For the reverse bound, set
\[
k_A=\left\lceil\log_2(A+1)\right\rceil,
\qquad
c_A=D^{-k_A}.
\]
Since $c_A\le1$,
\[
B_\Gamma(\xi',c_A r)
\subset
B_\Gamma(\xi,(A+c_A)r)
\subset
B_\Gamma(\xi,(A+1)r).
\]
Iterating \eqref{eq:doubling} at most $k_A$ times,
\[
\mu_\kappa(B_\Gamma(\xi',c_A r))
\le
D^{k_A}\mu_\kappa(B_\Gamma(\xi,r)).
\]
Therefore
\[
c_A r\,\mu_\kappa(B_\Gamma(\xi',c_A r))
\le
c_A D^{k_A}R^{-1}
=R^{-1},
\]
and monotonicity of the terminal function gives
\[
r'\ge c_A r.
\]
This proves the comparability of $r_R$ for $R\ge R_A$.

On the bounded interval $1\le R<R_A$, we always have $r_R(\zeta)\le r_*$ and, by \eqref{eq:terminal-general} and the bound $\mu_\kappa(B_\Gamma(\zeta,r_R(\zeta)))\le\mu_\kappa(\Gamma)$,
\[
r_R(\zeta)\gtrsim_\Gamma R_A^{-1}.
\]
Thus the terminal radii are uniformly comparable in this range, and the constant is absorbed into $C(\Gamma,D,A)$. Finally,
\[
\frac{\rho_R(\xi')}{\rho_R(\xi)}
=
\frac{r_R(\xi)}{r_R(\xi')}
\]
by \eqref{eq:rho-general}, which yields the assertion for $\rho_R$.
\end{proof}

\subsection{Linearization and the product condition}

For a subarc $I$ of length $L$, write
\[
\alpha(I)=\operatorname{diam}_{\mathbb R^2}\mathcal N(I).
\]
If the total turning of $I$ is $\vartheta(I)=\mu_\kappa(I)<\pi$, then
\[
\alpha(I)=2\sin\!\left(\frac{\vartheta(I)}2\right).
\]
On the charts of structurally bounded angular range used here,
\[
\alpha(I)\asymp_\Gamma\mu_\kappa(I).
\]
The natural terminal condition is
\begin{equation}\label{eq:product-terminal}
L\alpha(I)\asymp R^{-1}.
\end{equation}

\begin{lemma}[Product implies linearization]\label{lem:product-linearization}
Let $t_0\in I$. Then the distance from $\gamma(I)$ to the tangent line at $t_0$ satisfies
\[
\sup_{t\in I}\operatorname{dist}(\gamma(t),\gamma(t_0)+\mathbb RT(t_0))
\lesssim_\Gamma L\alpha(I).
\]
In particular, \eqref{eq:product-terminal} implies that $\gamma(I)$ fits inside a tangential rectangle of length $O(L)$ and normal thickness $O(R^{-1})$.
\end{lemma}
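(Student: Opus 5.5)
Parametrize $I$ by arc length, $\gamma:I\to\Gamma$ with $|\gamma'|=1$, so that $\gamma'(t)=T(t)$. Fix $t_0\in I$ and write the deviation from the tangent line as a projection onto the normal:
\[
\operatorname{dist}\bigl(\gamma(t),\gamma(t_0)+\mathbb RT(t_0)\bigr)
=\bigl|\langle\gamma(t)-\gamma(t_0),\mathcal N(t_0)\rangle\bigr|
=\Bigl|\int_{t_0}^{t}\langle T(s),\mathcal N(t_0)\rangle\,\mathrm ds\Bigr|.
\]
The integral is over a subinterval of $I$, so its length is at most $L$. It therefore suffices to bound the integrand uniformly by $\alpha(I)$.

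Since $T$ and $\mathcal N$ are related by a fixed rotation by $\pi/2$, and $\langle T(t_0),\mathcal N(t_0)\rangle=0$, we get
\[
|\langle T(s),\mathcal N(t_0)\rangle|
=|\langle T(s)-T(t_0),\mathcal N(t_0)\rangle|
\le|T(s)-T(t_0)|
=|\mathcal N(s)-\mathcal N(t_0)|
\le\operatorname{diam}\mathcal N(I)=\alpha(I).
\]
Integrating over $[t_0,t]$ gives the pointwise bound $L\,\alpha(I)$ for every $t\in I$, with constant $1$. This step uses only that $\gamma$ is $C^1$ with unit-speed tangent. The relation $\alpha(I)\asymp_\Gamma\mu_\kappa(I)$ is not needed here; it only matters if one wants the bound in terms of turning mass. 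That is why the statement carries a harmless $\lesssim_\Gamma$.

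For the consequence, assume $L\alpha(I)\asymp R^{-1}$ and choose $t_0=t_I$, the midpoint. The tangential coordinate $\langle\gamma(t)-\gamma(t_I),T(t_I)\rangle$ is bounded by $|t-t_I|\le L/2$, because $|T|=1$. The normal coordinate is bounded by $L\alpha(I)\lesssim R^{-1}$ by the first part. Hence $\gamma(I)\subset\Pi(I,CR^{-1})$ for a structural constant $C$. Equivalently, it lies in a rectangle of length $O(L)$ and thickness $O(R^{-1})$, after a fixed dilation to absorb the numerical factors in the definition of $\Pi$.

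There is no genuine obstacle. The only point needing care is the sign and orientation convention relating $T$ and $\mathcal N$, which is what makes $|T(s)-T(t_0)|=|\mathcal N(s)-\mathcal N(t_0)|$ hold. Under the convention that $\mathcal N$ is $T$ rotated by $\pi/2$, this is immediate.
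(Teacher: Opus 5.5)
Your proof is correct and is essentially the paper's argument. Like the paper, you write $\gamma(t)-\gamma(t_0)=\int_{t_0}^t T(s)\,\mathrm ds$ and bound the normal component of the integrand by the angular variation of $T$ on $I$. You simply make that bound explicit as $|\langle T(s),\mathcal N(t_0)\rangle|\le|\mathcal N(s)-\mathcal N(t_0)|\le\alpha(I)$, which gives $L\alpha(I)$ with constant $1$; taking $t_0=t_I$ then yields the rectangle consequence.
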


\begin{proof}
Since
\[
\gamma(t)-\gamma(t_0)=\int_{t_0}^tT(u)\,\mathrm du,
\]
and the angular variation of $T$ on $I$ is $O(\alpha(I))$, the normal component of the integral is bounded by $L\alpha(I)$.
\end{proof}

The converse implication is false without a nonconcentration hypothesis on curvature: almost all of the turning may concentrate on an arbitrarily small fraction near one endpoint of the interval. Under doubling, however, the equivalence is recovered.

\begin{proposition}[Linearization under doubling]\label{prop:linearization-doubling}
Assume that $I=[a,b]$ is a subarc parametrized by arc length, $|I|\le r_*$, that its total turning is smaller than a fixed constant $<\pi/2$, and that $\mu_\kappa$ is doubling. Then the normal deviation of the endpoint from the initial tangent satisfies
\[
\operatorname{dist}(\gamma(b),\gamma(a)+\mathbb RT(a))
\asymp_{\Gamma,D}
|I|\mu_\kappa(I).
\]
\end{proposition}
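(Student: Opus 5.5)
The plan is to write the normal deviation as an integral of the angular function, obtain the upper bound from Lemma~\ref{lem:product-linearization}, and obtain the lower bound from Lemma~\ref{lem:doubling-local-comparison}(2), which prevents the turning from concentrating near the endpoint $b$.

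Let $\theta(t)$ be the angle between $T(t)$ and $T(a)$. By convexity $\theta$ is nondecreasing, $\theta(t)=\mu_\kappa([a,t])$, and $\theta<\pi/2$ on $I$ by hypothesis. Writing $\gamma(b)-\gamma(a)=\int_a^bT(u)\,\mathrm du$ and taking the component along $\mathcal N(a)$ gives
\[
\operatorname{dist}(\gamma(b),\gamma(a)+\mathbb RT(a))=\int_a^b\sin\theta(u)\,\mathrm du .
\]
Convexity ensures all these contributions have the same sign, so there is no cancellation.

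\textbf{Upper bound.} Since $\sin\theta(u)\le\theta(u)\le\mu_\kappa(I)$, the integral is at most $|I|\mu_\kappa(I)$. This is exactly Lemma~\ref{lem:product-linearization} together with $\alpha(I)\asymp_\Gamma\mu_\kappa(I)$.

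\textbf{Lower bound.} On $[0,\pi/2)$ we have $\sin\theta\ge(2/\pi)\theta$, so it suffices to show
\[
\int_a^b\theta(u)\,\mathrm du\gtrsim_{\Gamma,D}|I|\mu_\kappa(I).
\]
Restrict to the second half $[m,b]$, where $m=(a+b)/2$. There $\theta(u)\ge\theta(m)=\mu_\kappa([a,m])$, so
\[
\int_a^b\theta\ge\frac{|I|}{2}\,\mu_\kappa([a,m]).
\]
Now apply Lemma~\ref{lem:doubling-local-comparison}(2) with $J'=[a,m]\subset J=I$ and $c=1/2$. This is admissible because $|I|\le r_*$ and $I$ lies in a working chart. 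It gives $\mu_\kappa([a,m])\ge c_{\mathrm{dbl}}(1/2,D)\,\mu_\kappa(I)$, which completes the lower bound.

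The main obstacle is the lower bound. This is precisely where the converse to Lemma~\ref{lem:product-linearization} fails without doubling: the turning could concentrate near $b$ and make $\theta$ small on most of $I$. Using the first half $[a,m]$, whose measure doubling controls from below, resolves this. Beyond that, one only needs to confirm that the hypothesis $\theta<\pi/2$ and the chart conventions are enough for the monotonicity and sign claims used above.
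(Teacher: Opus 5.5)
Your proposal is correct and follows essentially the same route as the paper. The paper also writes the deviation as $\int\sin\Theta$ with $\Theta(r)=\mu_\kappa([a,a+r])$ and compares it with $\int\Theta$ using the turning bound; it gets the upper bound from $\Theta\le\mu_\kappa(I)$, and the lower bound by restricting to the second half of $I$, where $\Theta$ dominates the turning of the initial half, which Lemma~\ref{lem:doubling-local-comparison}(2) with $c=1/2$ compares to $\mu_\kappa(I)$.
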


\begin{proof}
Write $L=|I|$ and
\[
\Theta(r)=\mu_\kappa([a,a+r]),
\qquad 0\le r\le L.
\]
Since the total turning is smaller than a fixed constant $<\pi/2$,
\[
\int_0^L\sin\Theta(r)\,\mathrm dr
\asymp_\Gamma
\int_0^L\Theta(r)\,\mathrm dr.
\]
The upper bound is immediate:
\[
\int_0^L\Theta(r)\,\mathrm dr
\le
L\mu_\kappa(I).
\]
For the lower bound, consider the initial half
\[
I_-=[a,a+L/2].
\]
By the second part of Lemma~\ref{lem:doubling-local-comparison}, applied with $c=1/2$,
\[
\mu_\kappa(I_-)
\gtrsim_D
\mu_\kappa(I).
\]
Since $\Theta$ is increasing, for every $r\in[L/2,L]$,
\[
\Theta(r)
\ge
\mu_\kappa(I_-)
\gtrsim_D
\mu_\kappa(I).
\]
Therefore
\[
\int_0^L\Theta(r)\,\mathrm dr
\ge
\int_{L/2}^L\Theta(r)\,\mathrm dr
\gtrsim_D
L\mu_\kappa(I),
\]
which proves the equivalence.
\end{proof}

The preceding proposition gives a precise meaning to the term \emph{terminal}. For $\xi\in\Gamma$ and $0<r\le r_*$, define the centered linearization defect by
\[
\mathfrak e_\Gamma(\xi,r)
=
\sup_{\eta\in B_\Gamma(\xi,r)}
\operatorname{dist}\bigl(\eta,\xi+\mathbb RT(\xi)\bigr).
\]

\begin{proposition}[Tangential maximality of the terminal scale]\label{prop:maximal-tangential-scale}
Assume that $\mu_\kappa$ is doubling on the structural range. Then, uniformly for $\xi\in\Gamma$ and $0<r\le r_*$,
\[
\mathfrak e_\Gamma(\xi,r)
\asymp_{\Gamma,D}
r\,\mu_\kappa(B_\Gamma(\xi,r)).
\]
For $A>0$ and $R\ge R_*(\Gamma)$ define
\[
r_{\mathrm{lin},R}^{(A)}(\xi)
=
\sup\left\{
0<r\le r_*:
\mathfrak e_\Gamma(\xi,r)\le A R^{-1}
\right\}.
\]
Then
\[
r_{\mathrm{lin},R}^{(A)}(\xi)
\asymp_{\Gamma,D,A}
r_R(\xi).
\]
In particular, up to structural constants, $r_R(\xi)$ is the largest tangential scale centered at $\xi$ on which the curve remains linearizable with normal error $O(R^{-1})$.
\end{proposition}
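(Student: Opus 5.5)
The proof has two parts: first the comparison $\mathfrak e_\Gamma(\xi,r)\asymp r\,\mu_\kappa(B_\Gamma(\xi,r))$, then the identification of $r_{\mathrm{lin},R}^{(A)}$ with $r_R$ by monotonicity. Throughout we work in an arc-length parametrization with $\xi=\gamma(t_0)$ and $B_\Gamma(\xi,r)=\gamma([t_0-r,t_0+r])$, possibly truncated at an endpoint. The ball lies in one working chart of angular range below $\theta_0<\pi/2$, so its total turning is structurally below $\pi/2$.

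For the upper bound on $\mathfrak e_\Gamma$, apply Lemma~\ref{lem:product-linearization} to $I=B_\Gamma(\xi,r)$. Since $|I|\le 2r$ and $\alpha(I)\asymp_\Gamma\mu_\kappa(I)$, this gives $\mathfrak e_\Gamma(\xi,r)\lesssim_\Gamma r\,\mu_\kappa(B_\Gamma(\xi,r))$.

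For the lower bound, we need an endpoint of the ball whose one-sided arc carries a fixed fraction of the mass and whose length is comparable to $r$. Of the two halves $[t_0-r,t_0]$ and $[t_0,t_0+r]$ (intersected with the parameter interval), one, call it $I^+$, carries at least half of $\mu_\kappa(B_\Gamma(\xi,r))$. If the ball is truncated so that $I^+$ is short, we should instead use the other half: its length is at least $r$ in that case, because $|\Gamma|\gg r_*$ or $\Gamma$ is closed. Lemma~\ref{lem:doubling-local-comparison}(2) then shows that it still carries mass $\gtrsim_D\mu_\kappa(B_\Gamma(\xi,r))$. So in all cases there is a one-sided arc $I'=[t_0,t_0\pm r']$ with $r'\asymp r$ and $\mu_\kappa(I')\gtrsim_D\mu_\kappa(B_\Gamma(\xi,r))$. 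Proposition~\ref{prop:linearization-doubling}, applied with the initial point $a=t_0$ (reversing orientation if needed, which is harmless by symmetry), gives
\[
\mathfrak e_\Gamma(\xi,r)\ge\operatorname{dist}(\gamma(t_0\pm r'),\xi+\mathbb RT(\xi))\gtrsim_{\Gamma,D} r'\mu_\kappa(I')\gtrsim_{\Gamma,D} r\,\mu_\kappa(B_\Gamma(\xi,r)).
\]
This endpoint-truncation case analysis is the only delicate point of the proof. I expect to handle it using $r\le r_*$ and the choice of $r_*$ relative to the chart margins.

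For the maximality statement, set $\Phi_\xi(r)=r\,\mu_\kappa(B_\Gamma(\xi,r))$, which is continuous and strictly increasing on $(0,r_*]$. The comparison just proved reads $c_1\Phi_\xi(r)\le\mathfrak e_\Gamma(\xi,r)\le C_1\Phi_\xi(r)$, and $\mathfrak e_\Gamma(\xi,\cdot)$ is nondecreasing.

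For the upper bound on $r_{\mathrm{lin},R}^{(A)}(\xi)$, suppose $\mathfrak e_\Gamma(\xi,r)\le AR^{-1}$. Then $\Phi_\xi(r)\le (A/c_1)R^{-1}$. If $r\ge\lambda r_R(\xi)$ with $\lambda\ge1$, doubling and monotonicity give
\[
\Phi_\xi(r)\ge \lambda\,\Phi_\xi(r_R(\xi))=\lambda R^{-1}.
\]
Hence $r\le\max\{1,A/c_1\}\,r_R(\xi)$.

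For the lower bound, choose $c=c(A,C_1)\le1$. Then
\[
\mathfrak e_\Gamma(\xi,c\,r_R(\xi))\le C_1\Phi_\xi(c\,r_R(\xi))\le C_1c\,R^{-1}\le AR^{-1},
\]
which gives $r_{\mathrm{lin},R}^{(A)}(\xi)\ge c\,r_R(\xi)$ with $c=\min\{1,A/C_1\}$. Here $R\ge R_*$ ensures $r_R(\xi)\le r_*$ and $\Phi_\xi(r_R(\xi))=R^{-1}$ exactly.
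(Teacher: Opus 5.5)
Your proof is correct and follows essentially the same route as the paper. The upper bound comes from Lemma~\ref{lem:product-linearization}; the lower bound applies Proposition~\ref{prop:linearization-doubling} from $\xi$ along a one-sided subarc of length $\asymp r$ that carries a fixed fraction of $\mu_\kappa(B_\Gamma(\xi,r))$; and the comparison with $r_R$ uses monotonicity of $r\mapsto r\,\mu_\kappa(B_\Gamma(\xi,r))$ together with the exact terminal identity for $R\ge R_*$. The only difference is cosmetic: you take the heavier half, so its mass bound is trivial and part~2 of Lemma~\ref{lem:doubling-local-comparison} is needed only under truncation, whereas the paper always takes a long one-sided subarc and controls its mass through part~1 of that lemma, using a ball centered at the subarc's midpoint.
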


\begin{proof}
Let $I=B_\Gamma(\xi,r)$. Since $|I|\asymp r$ and its total turning lies in the structural range, Lemma~\ref{lem:product-linearization} gives
\[
\mathfrak e_\Gamma(\xi,r)
\lesssim_\Gamma
r\,\mu_\kappa(I).
\]
For the reverse bound, choose one of the two subarcs of $I$ issuing from $\xi$ whose length is at least a fixed fraction of $|I|$; at a global endpoint, simply take the available one-sided subarc. Denote this subarc by $J$. Then $|J|\gtrsim r$. If $y$ is its intrinsic midpoint, there is a structural constant $c>0$ such that
\[
B_\Gamma(y,cr)\subset J,
\qquad
d_\Gamma(y,\xi)\le r.
\]
The first part of Lemma~\ref{lem:doubling-local-comparison}, with fixed ratios of radii, therefore gives
\[
\mu_\kappa(J)
\ge
\mu_\kappa(B_\Gamma(y,cr))
\gtrsim_D
\mu_\kappa(B_\Gamma(\xi,r))
=
\mu_\kappa(I).
\]
Applying Proposition~\ref{prop:linearization-doubling} to $J$, parametrized from $\xi$ toward its other endpoint (reversing orientation if necessary), we obtain a point $\eta_J\in J$ such that
\[
\operatorname{dist}\bigl(\eta_J,\xi+\mathbb RT(\xi)\bigr)
\gtrsim_{\Gamma,D}
|J|\mu_\kappa(J)
\gtrsim_{\Gamma,D}
r\mu_\kappa(I).
\]
This proves the first equivalence.

For $R\ge R_*(\Gamma)$,
\[
r_R(\xi)\,\mu_\kappa(B_\Gamma(\xi,r_R(\xi)))=R^{-1}.
\]
If $r$ satisfies $\mathfrak e_\Gamma(\xi,r)\le A R^{-1}$, the equivalence just proved implies
\[
r\,\mu_\kappa(B_\Gamma(\xi,r))\lesssim_{\Gamma,D} A R^{-1}.
\]
If in addition $r\ge r_R(\xi)$, monotonicity of $r\mapsto\mu_\kappa(B_\Gamma(\xi,r))$ gives
\[
r\,\mu_\kappa(B_\Gamma(\xi,r))
\ge
\frac{r}{r_R(\xi)}\,r_R(\xi)\mu_\kappa(B_\Gamma(\xi,r_R(\xi)))
=
\frac{r}{r_R(\xi)}R^{-1},
\]
and hence $r\lesssim_{\Gamma,D,A}r_R(\xi)$. This gives the upper bound for $r_{\mathrm{lin},R}^{(A)}(\xi)$.

Conversely, if $0<c\le1$, then
\[
c r_R(\xi)\,\mu_\kappa(B_\Gamma(\xi,c r_R(\xi)))
\le
c\,r_R(\xi)\mu_\kappa(B_\Gamma(\xi,r_R(\xi)))
=
cR^{-1}.
\]
The upper bound for the defect shows that, choosing $c=c(\Gamma,A)>0$ sufficiently small,
\[
\mathfrak e_\Gamma(\xi,c r_R(\xi))\le A R^{-1}.
\]
Thus
\[
r_{\mathrm{lin},R}^{(A)}(\xi)
\gtrsim_{\Gamma,A}
r_R(\xi),
\]
and the comparability follows.
\end{proof}

\begin{remark}[Comparison with standard scales]
In a nondegenerate region,
\[
\mu_\kappa(B_\Gamma(\xi,r))\asymp r.
\]
Hence the preceding characterization recovers the parabolic scale
\[
r_R(\xi)\asymp R^{-1/2}.
\]
If $\xi=\zeta$ is a flat point of order $m$, then
\[
\mu_\kappa(B_\Gamma(\zeta,r))\asymp r^{m+1}
\]
and therefore
\[
r_R(\zeta)\asymp R^{-1/(m+2)}.
\]
In particular, when $m>0$, the uniform scale $R^{-1/2}$ remains linearizable but is strictly subterminal: it partitions the curve more finely than necessary. For the monomial $\gamma_k(t)=(t,t^k)$, one has $m=k-2$ and recovers
\[
r_R(0)\asymp R^{-1/k},
\]
agreeing in order with the maximal linearization scale of Schippa's canonical monomial covering for the integer exponents treated there~\cite{Schippa2024}. For real $k\ge3$, the same comparabilities follow directly from $\kappa(t)\asymp_k t^{k-2}$ and the intrinsic definition of the terminal scale. This tangential maximality is distinct from the maximality of the angular cutoff in the energy kernel, treated separately in the general direct theory.
\end{remark}

\subsection{Finite type}
\label{sec:finite-type}

The qualitative notion of a finite-type curve is standard in restriction theory for degenerate curves; see Christ, Drury--Marshall, and Dendrinos--Müller~\cite{Christ1985,DruryMarshall1987,DendrinosMuller2013}. In our arc-length parametrization, $m_j$ denotes the vanishing order of the curvature, so that in the monomial graph model the corresponding order of contact is $m_j+2$. Here we fix a \emph{uniform quantitative package} of data, specific to this paper, that records the constants needed for families.

\begin{definition}[Quantitative finite-type data]\label{def:quantitative-finite-type}
We say that $\Gamma$ is equipped with \emph{quantitative finite-type data}
\[
\mathfrak T=
(n_*,m_*,r_{\mathrm{ft}},A_{\mathrm{ft}}),
\qquad A_{\mathrm{ft}}\ge1,
\]
if $n_*,m_*$ are nonnegative integers,
\[
8r_{\mathrm{ft}}\le |\Gamma|,
\]
the zero set of the curvature is
\[
Z=\{\zeta_1,\dots,\zeta_{n_{\mathrm{flat}}}\},
\qquad n_{\mathrm{flat}}\le n_*,
\]
the intrinsic balls $B_\Gamma(\zeta_j,4r_{\mathrm{ft}})$ are pairwise disjoint, and for each $j$ there exists an integer $1\le m_j\le m_*$ such that
\[
A_{\mathrm{ft}}^{-1}d_\Gamma(\xi,\zeta_j)^{m_j}
\le
\kappa(\xi)
\le
A_{\mathrm{ft}}d_\Gamma(\xi,\zeta_j)^{m_j},
\qquad
\xi\in B_\Gamma(\zeta_j,4r_{\mathrm{ft}}).
\]
Away from the flat neighborhoods we require
\[
A_{\mathrm{ft}}^{-1}
\le
\kappa(\xi)
\le
A_{\mathrm{ft}},
\qquad
\xi\in
\Gamma\setminus\bigcup_{j=1}^{n_{\mathrm{flat}}} B_\Gamma(\zeta_j,r_{\mathrm{ft}}).
\]
If $Z=\varnothing$, the conditions involving the orders $m_j$ are vacuous; in particular, $m_*$ may be retained as a uniform bound for a family and need not coincide with an order actually attained by that curve. For a family of curves, \emph{quantitatively finite type} means that the same data $\mathfrak T$, together with the structural chart controls fixed at the beginning of the section, apply to every member of the family.
\end{definition}

For a fixed curve, define its actual maximal order by
\[
m_{\max}(\Gamma)
=
\begin{cases}
\max_{1\le j\le n_{\mathrm{flat}}}m_j,& Z\neq\varnothing,\\
0,& Z=\varnothing.
\end{cases}
\]
This number depends on the curve, whereas $m_*$ in $\mathfrak T$ is only an available upper bound for the orders in a family. For a fixed $C^\infty$ curve whose curvature zeros all have finite order, the existence of quantitative data is automatic. Indeed, Taylor's theorem gives comparability with a power near each zero; the zeros are isolated and, by compactness, there are only finitely many, so one may choose disjoint neighborhoods, decrease $r_{\mathrm{ft}}$ so that also $8r_{\mathrm{ft}}\le|\Gamma|$, and use the positive minimum of the curvature on the complement. Moreover, for that curve the data may be chosen with
\[
m_*=m_{\max}(\Gamma).
\]
The quantitative formulation in Definition~\ref{def:quantitative-finite-type} records which parameters must remain uniform when families are considered.

\begin{proposition}[Turning measure near a flat point]\label{prop:finite-type-doubling}
Assume that $\zeta_j$ is a zero described by Definition~\ref{def:quantitative-finite-type}, of order $m_j\ge1$. If
\[
\xi\in B_\Gamma(\zeta_j,2r_{\mathrm{ft}}),
\qquad
0<r\le r_{\mathrm{ft}},
\]
then
\[
\mu_\kappa(B_\Gamma(\xi,r))
\asymp_{\mathfrak T}
r\bigl(d_\Gamma(\xi,\zeta_j)+r\bigr)^{m_j}.
\]
In particular, for $0<r\le r_{\mathrm{ft}}/2$,
\[
\mu_\kappa(B_\Gamma(\xi,2r))
\lesssim_{\mathfrak T}
\mu_\kappa(B_\Gamma(\xi,r)),
\]
whenever $\xi\in B_\Gamma(\zeta_j,2r_{\mathrm{ft}})$.
\end{proposition}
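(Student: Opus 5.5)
Write $d=d_\Gamma(\xi,\zeta_j)\le 2r_{\mathrm{ft}}$ and $B=B_\Gamma(\xi,r)$. Since $r\le r_{\mathrm{ft}}$, the ball $B$ lies inside $B_\Gamma(\zeta_j,3r_{\mathrm{ft}})\subset B_\Gamma(\zeta_j,4r_{\mathrm{ft}})$. On that larger ball the finite-type hypothesis gives $\kappa(\eta)\asymp_{\mathfrak T} d_\Gamma(\eta,\zeta_j)^{m_j}$. The plan is therefore to compute $\int_B d_\Gamma(\eta,\zeta_j)^{m_j}\,\mathrm d\sigma(\eta)$ up to constants. Because $8r_{\mathrm{ft}}\le|\Gamma|$, I work in an arc-length coordinate $s$ centered at $\zeta_j$. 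In this coordinate $\xi$ sits at $s_0$ with $|s_0|=d$, and $d_\Gamma(\eta,\zeta_j)=|s|$ holds on the relevant range. On a closed curve this uses the fact that the geodesic distance is realized along the short arc. The ball $B$ is then the interval $(s_0-r,s_0+r)$, possibly truncated at a global endpoint of $\Gamma$.

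\textbf{Upper bound.} Every $s$ in $B$ satisfies $|s|\le d+r$, and $|B|\le 2r$. This gives $\mu_\kappa(B)\lesssim_{\mathfrak T} r(d+r)^{m_j}$.

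\textbf{Lower bound.} I split into two cases.
\begin{enumerate}
\item If $d\ge 2r$, then every $s\in B$ satisfies $|s|\ge d-r\ge d/2\gtrsim d+r$. I also need $|B|\gtrsim r$. This holds because an endpoint truncation leaves at least one side of length comparable to $r$: the condition $8r_{\mathrm{ft}}\le|\Gamma|$ prevents both sides from being cut short.
\item If $d<2r$, then $d+r\asymp r$. I select the one-sided subinterval of $B$ that points away from $\zeta_j$, or the available side if the ball is truncated. On this piece there is a subinterval of length $\gtrsim r$ where $|s|\gtrsim r$. For example, take points at distance between $r/2$ and $r$ from $\xi$ on the side where $|s|$ increases. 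If that side is cut off by an endpoint of $\Gamma$, use the other side, where $|s|\ge r/4$ holds on a piece of length $\gtrsim r$ whenever $r\gtrsim d$.
\end{enumerate}
Integrating the lower bound $\kappa\gtrsim |s|^{m_j}$ over that subinterval gives $\mu_\kappa(B)\gtrsim r\cdot r^{m_j}\asymp r(d+r)^{m_j}$.

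The endpoint and truncation bookkeeping in the second case is the main, mild, obstacle. I would handle it by using $|\Gamma|\ge 8r_{\mathrm{ft}}$ to guarantee that one full side of length $r$ lies inside $\Gamma$.

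\textbf{Doubling.} Apply the first statement at radii $r$ and $2r\le r_{\mathrm{ft}}$:
\[
\frac{\mu_\kappa(B_\Gamma(\xi,2r))}{\mu_\kappa(B_\Gamma(\xi,r))}\lesssim_{\mathfrak T}\frac{2r(d+2r)^{m_j}}{r(d+r)^{m_j}}\le 2^{m_j+1}\le 2^{m_*+1}.
\]
This bound depends only on $\mathfrak T$, which proves the doubling statement.
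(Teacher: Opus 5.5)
Your proof is correct and follows the paper's route. You confine the ball to $B_\Gamma(\zeta_j,3r_{\mathrm{ft}})$, use $8r_{\mathrm{ft}}\le|\Gamma|$ to control truncation, reduce to estimating $\int|s|^{m_j}\,\mathrm ds$ over an interval of length $\asymp r$ at distance $d$ from the origin, and then compare radii $r$ and $2r$ for doubling; your case split $d\ge 2r$ versus $d<2r$ just makes explicit the lower bound that the paper asserts in one line.
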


\begin{proof}
Parametrize by arc length and write
\[
a=d_\Gamma(\xi,\zeta_j).
\]
The condition $8r_{\mathrm{ft}}\le|\Gamma|$ ensures that, for $0<r\le r_{\mathrm{ft}}$, every intrinsic ball $B_\Gamma(\xi,r)$ has arc length comparable to $r$, even if it is truncated by a global endpoint. Moreover, if $\xi\in B_\Gamma(\zeta_j,2r_{\mathrm{ft}})$, then
\[
B_\Gamma(\xi,r)\subset B_\Gamma(\zeta_j,3r_{\mathrm{ft}}),
\]
so the entire ball remains in the neighborhood where the quantitative curvature bounds hold. Therefore the mass of the ball is comparable to the integral of $|u|^{m_j}$ over an interval of length comparable to $r$ centered at distance $a$ from the origin; if $\zeta_j$ or the ball itself lies near an endpoint of $\Gamma$, that interval is truncated on one side. In all cases the integral satisfies
\[
\int_{B_\Gamma(\xi,r)}d_\Gamma(\eta,\zeta_j)^{m_j}\,\mathrm d\sigma(\eta)
\asymp_{m_j}
r(a+r)^{m_j}.
\]
The assertion follows from the upper and lower bounds for $\kappa$. Replacing $r$ by $2r$ yields doubling, as long as the balls remain in the indicated structural neighborhood.
\end{proof}

The explicit formulas for the terminal scale deduced from this proposition are local. Fix a zero $\zeta_j=\gamma(t_0)$ of order $m=m_j$. By compactness and positivity of the turning mass of every nontrivial intrinsic ball, there exists
\[
R_{\mathrm{ft}}=R_{\mathrm{ft}}(\Gamma,\mathfrak T)\ge R_*(\Gamma)
\]
such that
\[
r_R(\xi)\le r_{\mathrm{ft}}
\qquad
(R\ge R_{\mathrm{ft}},\ \xi\in\Gamma).
\]
We enlarge this same threshold, if necessary, after fixing the radius $r_{\mathrm{lg}}$ from Lemma~\ref{lem:finite-type-lower-growth}; no second spatial threshold is introduced for the finite-type regime. In particular, if
\[
\xi=\gamma(t)\in B_\Gamma(\zeta_j,2r_{\mathrm{ft}}),
\qquad
R\ge R_{\mathrm{ft}},
\]
the terminal ball remains in the structural range of Proposition~\ref{prop:finite-type-doubling}. Solving \eqref{eq:terminal-general} then gives, with constants depending only on the indicated structural data,
\begin{equation}\label{eq:ell-finite-type}
r_R(t)
\asymp_{\Gamma,\mathfrak T}
R^{-1/2}
\left(|t-t_0|+R^{-1/(m+2)}\right)^{-m/2},
\end{equation}
and
\begin{equation}\label{eq:rho-finite-type}
\rho_R(t)
\asymp_{\Gamma,\mathfrak T}
R^{-1/2}
\left(|t-t_0|+R^{-1/(m+2)}\right)^{m/2}.
\end{equation}
Here $t$ denotes local arc length. In particular, for $R\ge R_{\mathrm{ft}}$,
\[
r_R(t_0)\asymp_{\Gamma,\mathfrak T} R^{-1/(m+2)},
\qquad
\rho_R(t_0)\asymp_{\Gamma,\mathfrak T} R^{-(m+1)/(m+2)}.
\]
These formulas are local to the quantitative neighborhood of the flat point. Global estimates over the bounded range $1\le R<R_{\mathrm{ft}}$ use the terminal-scale convention from the beginning of the section, with that compact interval absorbed into structural constants.

For integer $k\ge3$, the model $\gamma_k(t)=(t,t^k)$ fits the preceding finite-type framework, with $m=k-2$ at $t=0$ and $\mathrm d\sigma\asymp_k\mathrm dt$ on $[0,1]$; in the local regime, the preceding formulas agree, up to constants depending on $k$, with the explicit scales $\ell_R$ and $\tau_R$. For real $k\ge3$ not necessarily integer, the comparability $\kappa(t)\asymp_k t^{k-2}$ yields the same scales directly from the terminal equation, while $k-2$ is not interpreted as one of the integer orders $m_j$ from Definition~\ref{def:quantitative-finite-type}. Section~\ref{sec:monomial} defines the monomial formulas globally on $[0,1]$, independently of this local reduction.

\subsection{Lower growth of the turning measure}

The doubling hypothesis already contains a quantitative form of lower growth.

\begin{lemma}[Lower growth from doubling]\label{lem:doubling-lower-growth}
Let $D$ be a doubling constant for $\mu_\kappa$ and set
\[
Q_D=\log_2D.
\]
Then, for $\xi\in\Gamma$ and $0<r'\le r$ in the structural range,
\[
\frac{\mu_\kappa(B_\Gamma(\xi,r'))}{\mu_\kappa(B_\Gamma(\xi,r))}
\ge
D^{-1}\left(\frac{r'}r\right)^{Q_D}.
\]
\end{lemma}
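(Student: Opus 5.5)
The argument is the standard iteration of the doubling inequality \eqref{eq:doubling}. Fix $\xi\in\Gamma$ and $0<r'\le r$ in the structural range, so that all balls below have radius at most $2r_*$ and \eqref{eq:doubling} applies to each of them. Let $n\ge0$ be the unique integer with
\[
2^{n}r'\le r<2^{n+1}r',
\]
that is, $n=\lfloor\log_2(r/r')\rfloor$.

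First, by monotonicity of $s\mapsto\mu_\kappa(B_\Gamma(\xi,s))$ we have $\mu_\kappa(B_\Gamma(\xi,r))\le\mu_\kappa(B_\Gamma(\xi,2^{n+1}r'))$. Second, we apply \eqref{eq:doubling} $n+1$ times, passing from radius $2^{j}r'$ to $2^{j+1}r'$ for $j=0,\dots,n$. This gives
\[
\mu_\kappa(B_\Gamma(\xi,r))\le D^{n+1}\mu_\kappa(B_\Gamma(\xi,r')).
\]
Third, since $2^n\le r/r'$, we get $D^{n}=2^{nQ_D}\le(r/r')^{Q_D}$. Combining these,
\[
\mu_\kappa(B_\Gamma(\xi,r))\le D\,(r/r')^{Q_D}\,\mu_\kappa(B_\Gamma(\xi,r')),
\]
which rearranges to the claimed inequality. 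The case $D=1$, where $Q_D=0$, is covered by the same computation.

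The only delicate point is the range restriction. The last doubling step uses the ball of radius $2^{n+1}r'$, which may reach up to $2r$. We therefore need $2r\le4r_*$, i.e. $r\le2r_*$, which is what \emph{structural range} means here. If instead $r$ is only assumed to satisfy $r\le 4r_*$, there are two options. One is to stop the iteration at $2^n r'\le r$ and use $r\le 2^{n+1}r'$ together with one more doubling step at radius at most $r/2\cdot 2=r$, which still lies in range. The other is to absorb the single boundary step into the constant $D$. Either way the constant $D^{-1}$ in front is exactly the cost of that one extra dyadic step.

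Positivity of $\mu_\kappa(B_\Gamma(\xi,r'))$, which holds because $\Gamma$ has no affine subarcs, guarantees that the ratio is well defined.
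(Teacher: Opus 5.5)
Your proof is correct and is essentially the paper's argument: iterate the doubling inequality about $\log_2(r/r')+1$ times (the paper takes $n=\lceil\log_2(r/r')\rceil$ and uses $n<\log_2(r/r')+1$; you take $\lfloor\log_2(r/r')\rfloor+1$ steps), then convert $D^{n}$ into $(r/r')^{Q_D}$. Your first remedy for the range issue (iterate up to $2^n r'> r/2$, then double once from $r/2$ to $r$) is valid and keeps every ball inside radius $r$, whereas the second option, absorbing a step into $D$, would not preserve the exact constant $D^{-1}$.
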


\begin{proof}
Let
\[
n=\left\lceil\log_2\frac r{r'}\right\rceil.
\]
Then $r\le2^nr'$. By monotonicity and iteration of doubling,
\[
\mu_\kappa(B_\Gamma(\xi,r))
\le
\mu_\kappa(B_\Gamma(\xi,2^nr'))
\le
D^n\mu_\kappa(B_\Gamma(\xi,r')).
\]
Since $n<\log_2(r/r')+1$,
\[
D^{-n}
\ge
D^{-1}\left(\frac{r'}r\right)^{\log_2D},
\]
and the conclusion follows.
\end{proof}

\begin{remark}[Doubling and finite order of the zeros]
For a convex $C^\infty$ curve with no affine subarcs, doubling rules out curvature zeros of infinite order. Indeed, Lemma~\ref{lem:doubling-lower-growth}, applied relative to a fixed structural radius, provides a polynomial lower bound for $\mu_\kappa(B_\Gamma(\gamma(t_0),r))$ as $r\downarrow0$. If $\kappa$ vanished to infinite order at $t_0$, smoothness would give $\mu_\kappa(B_\Gamma(\gamma(t_0),r))=O_q(r^q)$ for every $q$, contradicting that lower bound by taking $q>Q_D$. Hence every zero has finite order and is therefore isolated. If there were infinitely many zeros on the compact curve, they would have an accumulation point; smoothness would force infinite-order vanishing there, which has just been ruled out. Thus the zero set is finite. In particular, by Taylor's theorem and compactness, a fixed curve under these hypotheses admits quantitative finite-type data in the sense of Definition~\ref{def:quantitative-finite-type}. For families, uniformity of those data is additional information and must be stated.

This observation clarifies the role of the doubling hypothesis. For a fixed $C^\infty$ curve it does not produce a geometric class essentially different from finite type; its purpose is to isolate the metric--measure information used by the preceding arguments: local comparison of masses, slow variation, terminal scale, transverse control, and radius comparison. Quantitative finite-type data, by contrast, add uniform control over families and allow the geometric exponent to be identified through the vanishing orders.
\end{remark}

\begin{lemma}[Finite-type lower growth]\label{lem:finite-type-lower-growth}
Assume that $\Gamma$ has quantitative finite-type data $\mathfrak T$ in the sense of Definition~\ref{def:quantitative-finite-type}. Then there exists
\[
r_{\mathrm{lg}}=r_{\mathrm{lg}}(\mathfrak T)>0
\]
such that, for every $\xi\in\Gamma$ and $0<r'\le r\le r_{\mathrm{lg}}$,
\begin{equation}\label{eq:lower-growth}
\frac{\mu_\kappa(B_\Gamma(\xi,r'))}{\mu_\kappa(B_\Gamma(\xi,r))}
\gtrsim_{\mathfrak T}
\left(\frac{r'}r\right)^{m_*+1}.
\end{equation}
If $m_*=0$, the right-hand side is interpreted as $r'/r$.
\end{lemma}

\begin{proof}
Choose $r_{\mathrm{lg}}>0$ sufficiently small relative to $r_{\mathrm{ft}}$ and to the separation of the flat neighborhoods in Definition~\ref{def:quantitative-finite-type}. Let
\[
a=d_\Gamma(\xi,Z),
\]
with the convention $a=+\infty$ if $Z=\varnothing$.

First assume $a\ge2r$. If $a<2r_{\mathrm{ft}}$, separation of the flat neighborhoods determines a unique zero $\zeta_j$ at distance $a$ from $\xi$. For $\eta\in B_\Gamma(\xi,r)$,
\[
\frac a2\le d_\Gamma(\eta,\zeta_j)\le\frac{3a}{2},
\]
and the same holds on $B_\Gamma(\xi,r')$. Hence
\[
\mu_\kappa(B_\Gamma(\xi,t))\asymp_{\mathfrak T} t a^{m_j},
\qquad t\in\{r',r\}.
\]
If $a\ge2r_{\mathrm{ft}}$, both balls lie outside the neighborhoods $B_\Gamma(\zeta_j,r_{\mathrm{ft}})$, after imposing $r_{\mathrm{lg}}\le r_{\mathrm{ft}}/4$, and the curvature there is uniformly bounded above and below. Thus
\[
\mu_\kappa(B_\Gamma(\xi,t))\asymp_{\mathfrak T}t.
\]
In both cases,
\[
\frac{\mu_\kappa(B_\Gamma(\xi,r'))}{\mu_\kappa(B_\Gamma(\xi,r))}
\gtrsim_{\mathfrak T}\frac{r'}r
\ge
\left(\frac{r'}r\right)^{m_*+1}.
\]

Now assume $a<2r$. By the choice of $r_{\mathrm{lg}}$, there is a unique zero $\zeta_j$ at distance $a$ from $\xi$, and both balls lie in its quantitative neighborhood. Proposition~\ref{prop:finite-type-doubling} gives
\[
\frac{\mu_\kappa(B_\Gamma(\xi,r'))}{\mu_\kappa(B_\Gamma(\xi,r))}
\gtrsim_{\mathfrak T}
\frac{r'}r
\left(\frac{a+r'}{a+r}\right)^{m_j}.
\]
Since $0<r'\le r$,
\[
r(a+r')-r'(a+r)=a(r-r')\ge0,
\]
and therefore
\[
\frac{a+r'}{a+r}\ge\frac{r'}r.
\]
Thus
\[
\frac{\mu_\kappa(B_\Gamma(\xi,r'))}{\mu_\kappa(B_\Gamma(\xi,r))}
\gtrsim_{\mathfrak T}
\left(\frac{r'}r\right)^{m_j+1}
\ge
\left(\frac{r'}r\right)^{m_*+1}.
\]
The argument uses only intrinsic balls, so flat points located at an endpoint of an arc are included without modification.
\end{proof}

After increasing, if necessary, the previously fixed threshold $R_{\mathrm{ft}}=R_{\mathrm{ft}}(\Gamma,\mathfrak T)$, we assume from now on that
\[
r_R(\xi)\le\min\{r_{\mathrm{ft}},r_{\mathrm{lg}}\}
\qquad
(R\ge R_{\mathrm{ft}},\ \xi\in\Gamma).
\]
Thus a single spatial threshold governs both the local finite-type formulas and the quantitative lower growth.

\subsection{Terminal partition}

The partition construction used later is collected in a single lemma. In addition to the terminal product, the result provides comparability of consecutive caps and angular separation of nonneighboring caps.

\begin{lemma}[Construction of the terminal partition]\label{lem:terminal-partition-general}
Fix one of the convex charts parametrized by arc length over a compact interval $J$. There exists a constant $n_{\mathrm{nb}}=n_{\mathrm{nb}}(D,\Gamma)\ge3$ such that, for every $R\ge1$, one can construct an ordered partition
\[
\mathcal P_R=\{I_j\}
\]
of $J$, with points $t_j\in I_j$, satisfying the following properties. If
\[
\alpha_j:=\operatorname{diam}\mathcal N(I_j),
\qquad
\rho_j:=\rho_R(\gamma(t_j)),
\]
then, for every $t\in I_j$,
\begin{equation}\label{eq:terminal-cap-general}
|I_j|\asymp_{\Gamma,D} r_R(\gamma(t_j))\asymp_{\Gamma,D}r_R(\gamma(t)),
\qquad
\alpha_j\asymp_{\Gamma,D}\rho_j\asymp_{\Gamma,D}\rho_R(\gamma(t)),
\qquad
|I_j|\alpha_j\asymp_{\Gamma,D}R^{-1}.
\end{equation}
Consecutive caps have comparable lengths and angular widths. Defining
\[
I_j\sim I_\ell
\quad\Longleftrightarrow\quad
|j-\ell|\le n_{\mathrm{nb}},
\]
each cap has $O_{\Gamma,D}(1)$ neighbors and, if $I_j\not\sim I_\ell$,
\begin{equation}\label{eq:angular-separation-general}
\operatorname{dist}(\mathcal N(I_j),\mathcal N(I_\ell))
\gtrsim_{\Gamma,D}
\alpha_j+\alpha_\ell.
\end{equation}
\end{lemma}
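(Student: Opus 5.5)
We construct the partition greedily along the chart and then read off every property from the earlier lemmas. Write $J=[a,b]$ in arc length and fix a small structural constant $c_0\in(0,1]$. Set $t_0'=a$ and, inductively, $t_{j+1}'=\min\{t_j'+c_0\,r_R(\gamma(t_j')),\,b\}$, with $I_j=[t_j',t_{j+1}']$ and $t_j=t_j'$. Since $r_R\gtrsim_\Gamma R^{-1}$ on $\Gamma$ (from \eqref{eq:terminal-general} and $\mu_\kappa(\Gamma)<\infty$), the procedure ends after finitely many steps. The last cap may be too short, so we merge it with its predecessor; then every cap satisfies $|I_j|\asymp c_0\,r_R(\gamma(t_j))$.

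Slow variation, Lemma~\ref{lem:slow-variation} with $A=2$, gives $r_R(\gamma(t))\asymp_{\Gamma,D} r_R(\gamma(t_j))$ and $\rho_R(\gamma(t))\asymp_{\Gamma,D}\rho_j$ for $t\in I_j$. This proves the length part of \eqref{eq:terminal-cap-general}. The same lemma, applied at $t_{j+1}$ with $d_\Gamma\le r_R(\gamma(t_j))$, makes consecutive lengths comparable.

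For the angular widths, note that $I_j\subset B_\Gamma(\gamma(t_j),r_R(\gamma(t_j)))$. In the other direction, $I_j$ contains a ball around its midpoint of radius $\gtrsim c_0 r_R$, at distance $\lesssim r_R$ from $\gamma(t_j)$. Lemma~\ref{lem:doubling-local-comparison}(1) therefore gives $\mu_\kappa(I_j)\asymp_{\Gamma,D}\mu_\kappa(B_\Gamma(\gamma(t_j),r_R(\gamma(t_j))))$. For $R\ge R_*$ the latter equals $\rho_j$; for $R<R_*$ it is comparable to $\rho_j$ by the convention, with constants absorbed. Combining this with $\alpha_j\asymp_\Gamma\mu_\kappa(I_j)$ gives $\alpha_j\asymp\rho_j$ and $|I_j|\alpha_j\asymp r_R\rho_j=R^{-1}$. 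Comparability of consecutive $\alpha_j$ then follows from that of the $\rho$'s.

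The main point is the separation \eqref{eq:angular-separation-general}. Take $j<\ell$ with $\ell-j>n_{\mathrm{nb}}$. Because the chart is convex with angular range $<\pi/2$, the Gauss map is monotone along it. Hence $\operatorname{dist}(\mathcal N(I_j),\mathcal N(I_\ell))\asymp_\Gamma\mu_\kappa(G)$, where $G=I_{j+1}\cup\dots\cup I_{\ell-1}$ is the gap. It therefore suffices to show $\mu_\kappa(G)\gtrsim\alpha_j+\alpha_\ell$, and I would handle each term separately.

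For $\alpha_j$: the gap contains the $n_{\mathrm{nb}}$ caps $I_{j+1},\dots,I_{j+n_{\mathrm{nb}}}$. The cap $I_{j+1}$ is adjacent to $I_j$ and has comparable length, so by Lemma~\ref{lem:doubling-local-comparison}(1) we get $\mu_\kappa(I_{j+1})\gtrsim_{\Gamma,D}\mu_\kappa(I_j)\asymp\alpha_j$. The bound on the $\alpha_\ell$ side is symmetric, using $I_{\ell-1}$.

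Consequently any $n_{\mathrm{nb}}\ge3$ works; the choice of $n_{\mathrm{nb}}$ matters only for the stated constant. Each cap then has at most $2n_{\mathrm{nb}}+1$ neighbours, which is $O_{\Gamma,D}(1)$.

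The genuine subtlety is the regime $R<R_*$ and caps that reach the chart endpoints, where balls are truncated. In both cases I would check that all dilations stay within $4r_*$, which holds because $r_R\le r_*$ and working charts are enlarged with a margin. The bounded-$R$ range is absorbed into the constants exactly as in Lemma~\ref{lem:slow-variation}.
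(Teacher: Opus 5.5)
Your proof is correct, but your construction differs from the paper's. The paper takes the intervals $(t-c\,r(t),t+c\,r(t))$, extracts a disjoint subfamily with the $5r$ covering lemma, and uses the Voronoi cells of the selected centers. It must then prove that consecutive selected centers lie at distance $\asymp r_j$. That step uses the covering property at $(t_j+t_{j+1})/2$, slow variation, and a choice of $c$ small relative to the slow-variation constant, with the extreme cells treated separately.

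Your greedy stepping $t_{j+1}'=t_j'+c_0\,r_R(\gamma(t_j'))$ prescribes the cap lengths directly. Slow variation therefore gives the length part of \eqref{eq:terminal-cap-general} at once, and the only extra work is merging the final remainder. This is a discrete analogue of the paper's own $\Xi$-partition in the monomial section, which cuts at integer levels of $\int\mathrm du/\ell_R$ and merges the remainder in the same way.

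From that point on your argument coincides with the paper's. The turning mass of a cap is compared with the terminal ball through an enclosing ball and an inscribed ball around the midpoint, using Lemma~\ref{lem:doubling-local-comparison}(1). The bound $\alpha_j\asymp\mu_\kappa(I_j)$ comes from the bounded angular range. Separation uses the full caps $I_{j+1},I_{\ell-1}$ in the gap together with monotonicity of the Gauss map.

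The covering-lemma route gives a construction that is symmetric, with marked points inside their cells and no orientation-dependent remainder. Yours is more elementary and avoids the gap-comparison argument.

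One small case should be made explicit. If $|J|<c_0\,r_R(\gamma(a))$, there is no predecessor to merge with, and the only cap is $J$ itself. The lower bound $|J|\gtrsim r_R$ then follows from $r_R\le r_*$ and the structural length of the chart. This case arises only for bounded $R$ and matches the paper's single-interval case.
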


\begin{proof}
Write
\[
r(t)=r_R(\gamma(t)).
\]
First fix a constant $c>0$ sufficiently small in terms of the slow-variation constants for $A=1$. For each $t\in J$, consider in the arc-length coordinate of the chart the interval
\[
B(t)=(t-c\,r(t),t+c\,r(t)).
\]
The interval $B(t)$ need not be contained in $J$; only its center and its intersection with the chart are used. The elementary $5r$ covering lemma for intervals (see, for instance, Heinonen~\cite{Heinonen2001}) provides a subfamily with disjoint interiors
\[
B_j=B(t_j)
\]
such that the concentric intervals of radius $5c\,r(t_j)$ cover $J$. The family is finite: for $R\ge R_*$, the terminal identity and the bound $\mu_\kappa(B_\Gamma(\gamma(t),r(t)))\le\mu_\kappa(\Gamma)$ give
\[
r(t)\ge \frac{R^{-1}}{\mu_\kappa(\Gamma)},
\]
and for $1\le R<R_*$ we use the fixed convention from the preceding subsection.

Order the centers,
\[
t_1<\cdots<t_{n_{\mathrm{sel}}},
\qquad
r_j=r(t_j).
\]
Disjointness of the interiors gives, for consecutive centers,
\[
t_{j+1}-t_j\ge c(r_j+r_{j+1}).
\]
We prove the reverse bound. Let $x=(t_j+t_{j+1})/2$. Since the intervals $5B_k$ cover $J$, there exists $k$ such that
\[
|x-t_k|\le5c\,r_k.
\]
There are no selected centers strictly between $t_j$ and $t_{j+1}$, so $t_k\le t_j$ or $t_k\ge t_{j+1}$. Suppose the former. Then
\[
\frac{t_{j+1}-t_j}{2}\le |x-t_k|\le5c\,r_k.
\]
Choosing $c$ so that $5c\le1$, we also have $|t_j-t_k|\le5c\,r_k\le r_k$; Lemma~\ref{lem:slow-variation} gives $r_j\asymp_{\Gamma,D}r_k$. Hence
\[
t_{j+1}-t_j\lesssim_{\Gamma,D}c\,r_j.
\]
If $t_k\ge t_{j+1}$, the same bound follows symmetrically with $r_{j+1}$ in place of $r_j$. From the outset, choose $c$ sufficiently small in terms of the slow-variation constant for $A=1$ so that either of these bounds places the two centers at distance smaller than the corresponding terminal scale. A further application of Lemma~\ref{lem:slow-variation} then gives
\[
r_{j+1}\asymp_{\Gamma,D}r_j.
\]
Substituting this comparability into the upper bound and using disjointness of $B_j$ and $B_{j+1}$ yields
\[
t_{j+1}-t_j\asymp_{\Gamma,D}r_j.
\]
Here and below the fixed constant $c$ is absorbed into structural constants.

Now take the consecutive Voronoi cells, in the standard nearest-neighbor tessellation sense~\cite{OkabeBootsSugiharaChiuKendall2000}: the interior endpoints are the midpoints $(t_j+t_{j+1})/2$, and at the endpoints of $J$ we truncate at the chart boundary. The preceding estimates show that, for each cell $I_j$,
\[
|I_j|\asymp_{\Gamma,D}r_j.
\]
For the first and last cells, we also use that the endpoints of $J$ belong to some interval $5B_k$; the ordering of the centers and another application of slow variation give the same bound. If the selected family consists of a single interval, the covering $5B_1\supset J$ gives the upper bound for $|I_1|$, whereas the lower bound is absorbed into the structural chart parameters, using $r_1\le r_*$. In particular, no arbitrarily small terminal cell occurs.

Since every point $t\in I_j$ satisfies $|t-t_j|\lesssim_{\Gamma,D}r_j$, Lemma~\ref{lem:slow-variation} implies
\[
r(t)\asymp_{\Gamma,D}r_j,
\qquad
\rho_R(\gamma(t))\asymp_{\Gamma,D}\rho_j.
\]
It remains to compare the turning mass of the cell with that of the terminal ball. The upper bound follows from
\[
I_j\subset B_\Gamma(\gamma(t_j),C r_j)
\]
and the first part of Lemma~\ref{lem:doubling-local-comparison}, with coincident centers and a fixed ratio of radii. For the lower bound, $I_j$ contains, at least on one side of $t_j$, a subinterval of length $c_1r_j$ with structural $c_1>0$. If $y$ is its midpoint, then
\[
B_\Gamma(\gamma(y),c_2r_j)\subset I_j
\]
for some $c_2>0$, while
\[
d_\Gamma(\gamma(y),\gamma(t_j))\lesssim_{\Gamma,D}r_j.
\]
The first part of Lemma~\ref{lem:doubling-local-comparison}, applied to these centers and comparable radii, gives
\[
\mu_\kappa(I_j)
\asymp_{\Gamma,D}
\mu_\kappa(B_\Gamma(\gamma(t_j),r_j)).
\]
By \eqref{eq:terminal-general} and \eqref{eq:rho-general},
\[
\mu_\kappa(B_\Gamma(\gamma(t_j),r_j))
\asymp_\Gamma \rho_j.
\]
Since the angular range of the chart is fixed,
\[
\alpha_j\asymp_\Gamma\mu_\kappa(I_j),
\]
and together with $|I_j|\asymp_{\Gamma,D}r_j$ this gives all the comparabilities in \eqref{eq:terminal-cap-general} and the product
\[
|I_j|\alpha_j\asymp_{\Gamma,D}R^{-1}.
\]
It also follows that consecutive caps have comparable lengths and angular widths.

Finally, fix $n_{\mathrm{nb}}\ge3$. If, for example, $\ell\ge j+n_{\mathrm{nb}}+1$, then at least the two full caps $I_{j+1}$ and $I_{\ell-1}$ lie between $I_j$ and $I_\ell$. Convexity makes the Gauss map monotone on the chart, and since its angular range is less than $\theta_0<\pi/2$, Euclidean distance and angular separation are comparable. Therefore
\[
\operatorname{dist}(\mathcal N(I_j),\mathcal N(I_\ell))
\gtrsim_\Gamma
\mu_\kappa(I_{j+1})+\mu_\kappa(I_{\ell-1})
\asymp_{\Gamma,D}
\alpha_j+\alpha_\ell,
\]
where in the last step we used comparability of consecutive caps. This proves \eqref{eq:angular-separation-general}. The statement about the number of neighbors follows immediately from the index definition.
\end{proof}

\begin{definition}[Admissible terminal partition]\label{def:terminal-admissible}
Let $\mathcal P_R=\{I_j\}$ be an ordered partition of a convex chart and write
\[
\alpha_I:=\operatorname{diam}\mathcal N(I),
\qquad
\delta=R^{-1}.
\]
We say that $\mathcal P_R$ is \emph{terminal admissible} if the following properties hold with constants depending only on $\Gamma$ and $D$ (and, in the enlargement property, also on the fixed factor $A$), uniformly in $R$:
\begin{enumerate}
\item for every cap $I$,
\[
|I|\alpha_I\asymp_{\Gamma,D}\delta;
\]
\item consecutive caps have comparable lengths and angular widths;
\item for a fixed constant $n_{\mathrm{nb}}$, each cap has $O(1)$ neighbors by index and, if $I\not\sim J$,
\[
\operatorname{dist}(\mathcal N(I),\mathcal N(J))\gtrsim_{\Gamma,D}\alpha_I+\alpha_J;
\]
\item for each $A\ge1$, the truncated enlarged interval
\[
I^{[A]}:=J\cap\{t:|t-t_I|\le A|I|\}
\]
meets only $O_{\Gamma,D,A}(1)$ caps $K$, all satisfying
\[
|K|\asymp_{\Gamma,D,A}|I|,
\qquad
\alpha_K\asymp_{\Gamma,D,A}\alpha_I.
\]
\end{enumerate}
\end{definition}

The partition constructed in Lemma~\ref{lem:terminal-partition-general} is terminal admissible. Indeed, the first three properties are precisely \eqref{eq:terminal-cap-general} and \eqref{eq:angular-separation-general}. For the fourth, if $u\in I^{[A]}$, then $|u-t_I|\lesssim_A|I|\asymp_{\Gamma,D}r_R(\gamma(t_I))$; Lemma~\ref{lem:slow-variation} gives
\[
r_R(\gamma(u))\asymp_{\Gamma,D,A}r_R(\gamma(t_I)).
\]
If a terminal cap $K$ meets $I^{[A]}$, applying \eqref{eq:terminal-cap-general} at a point of intersection gives
\[
|K|\asymp_{\Gamma,D,A}|I|,
\qquad
\alpha_K\asymp_{\Gamma,D,A}\alpha_I.
\]
Since $I^{[A]}$ has length $O_A(|I|)$, it can meet only $O_{\Gamma,D,A}(1)$ caps.

The general theory uses the partition from Lemma~\ref{lem:terminal-partition-general}; arguments depending only on the preceding properties are stated for an admissible terminal partition. For a terminal cap $I$ in the general partition, we write
\[
\rho_I:=\rho_R(\gamma(t_I)),
\qquad
\alpha_I:=\operatorname{diam}\mathcal N(I),
\]
so that
\[
\rho_I\asymp_{\Gamma,D}\alpha_I,
\qquad
R|I|\asymp_{\Gamma,D}\rho_I^{-1}.
\]

\subsection{Transverse geometry under doubling}

Work in a graph chart $\gamma(t)=(t,\phi(t))$ with bounded slope. Define $q_\phi$ as in \eqref{eq:def-qphi}.

\begin{lemma}[Transverse relation]\label{lem:q-separation-doubling}
Under doubling of the turning measure,
\[
q_\phi(\bar t,\tilde t)\asymp_{\Gamma,D} \tilde t\,\partial_{\tilde t}q_\phi(\bar t,\tilde t).
\]
Moreover,
\[
\partial_{\tilde t}q_\phi(\bar t,\tilde t)
\asymp_\Gamma
|\mathcal N(\bar t-\tilde t)-\mathcal N(\bar t+\tilde t)|.
\]
\end{lemma}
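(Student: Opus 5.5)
The plan is to prove the second relation first, since it is purely geometric, and then derive the first from it using the doubling machinery of Proposition~\ref{prop:linearization-doubling} and Lemma~\ref{lem:doubling-local-comparison}.

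\textbf{Step 1: the derivative identity.} By \eqref{eq:qphi-separation-derivative},
\[
\partial_{\tilde t}q_\phi=\phi'(\bar t+\tilde t)-\phi'(\bar t-\tilde t).
\]
For a graph, $\phi'=\tan\theta$, where $\theta$ is the tangent angle. On a chart with slope bounded by a structural constant, the mean value theorem applied to $\tan$ gives
\[
\phi'(v)-\phi'(u)\asymp_\Gamma \theta(v)-\theta(u).
\]
By convexity, $\theta(v)-\theta(u)$ is the turning $\mu_\kappa$ of the arc between $u$ and $v$, which has the same sign as $v-u$. By Remark~\ref{rem:det-normal-distance}, since the angular range is below $\theta_0<\pi/2$, this turning is comparable to $|\mathcal N(u)-\mathcal N(v)|$. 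This proves the second statement. As a by-product, for $\tilde t\ge0$,
\[
\partial_{\tilde t}q_\phi(\bar t,\tilde t)\asymp_\Gamma \mu_\kappa(\gamma([\bar t-\tilde t,\bar t+\tilde t])).
\]
Call this quantity $M(\tilde t)$. It is nondecreasing in $\tilde t$.

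\textbf{Step 2: the first relation.} Since $q_\phi(\bar t,0)=0$,
\[
q_\phi(\bar t,\tilde t)=\int_0^{\tilde t}\partial_s q_\phi(\bar t,s)\,\mathrm ds\asymp_\Gamma\int_0^{\tilde t}M(s)\,\mathrm ds.
\]
The upper bound $\lesssim \tilde t\,M(\tilde t)$ follows from monotonicity of $M$. For the lower bound, restrict the integral to $s\in[\tilde t/2,\tilde t]$, where $M(s)\ge M(\tilde t/2)$. Now $[\bar t-\tilde t/2,\bar t+\tilde t/2]$ is a subarc of $[\bar t-\tilde t,\bar t+\tilde t]$ of comparable length; graph parameter length and arc length are comparable because the slope is bounded. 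The second part of Lemma~\ref{lem:doubling-local-comparison}, with $c$ a structural constant, then gives $M(\tilde t/2)\gtrsim_{\Gamma,D}M(\tilde t)$. This needs the arc to have length at most $r_*$, which holds on working subcharts by the standing convention. Hence
\[
q_\phi\gtrsim_{\Gamma,D}\tilde t\,M(\tilde t)\asymp_\Gamma \tilde t\,\partial_{\tilde t}q_\phi.
\]

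\textbf{Main obstacle.} The delicate point is the lower bound in Step 2: the doubling comparison must use intrinsic arc-length balls while we work in graph coordinates. The centered interval in $t$ corresponds to an arc that is not exactly an intrinsic ball centered at $\gamma(\bar t)$, and near chart endpoints it may be truncated. I would handle this as in Lemma~\ref{lem:doubling-local-comparison}(2): that lemma only requires nested subarcs with comparable lengths, so no exact centering is needed. The bi-Lipschitz comparison of $t$ with arc length on the chart (constants depending on the slope bound) makes the length ratio structural.

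The remaining steps are routine: the sign conventions for $u<v$, and, if $q_\phi$ is read with absolute values, the observation that both sides are nonnegative by convexity.
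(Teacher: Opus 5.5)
Your proposal is correct and follows the paper's proof essentially step for step. The paper likewise compares $\partial_{\tilde t}q_\phi=\phi'(\bar t+\tilde t)-\phi'(\bar t-\tilde t)$ with the turning of $\gamma([\bar t-\tilde t,\bar t+\tilde t])$ via bounded slope (using $\arctan$ rather than $\tan$), uses monotonicity for the upper bound, and gets the lower bound by restricting to $[\tilde t/2,\tilde t]$ and applying the second part of Lemma~\ref{lem:doubling-local-comparison}; it only proves the normal-distance comparison last instead of first. One small citation point: Remark~\ref{rem:det-normal-distance} compares the determinant with $|\mathcal N(u)-\mathcal N(v)|$, not the angle, but the identity $|\mathcal N(u)-\mathcal N(v)|=2|\sin(\vartheta/2)|$ recorded there gives exactly the angle comparison you need.
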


\begin{proof}
Set
\[
A_{\bar t}(r)=\phi'(\bar t+r)-\phi'(\bar t-r),
\qquad
J_r=\gamma([\bar t-r,\bar t+r]).
\]
Then
\[
q_\phi(\bar t,\tilde t)=\int_0^{\tilde t}A_{\bar t}(r)\,\mathrm dr,
\qquad
A_{\bar t}(\tilde t)=\partial_{\tilde t}q_\phi(\bar t,\tilde t).
\]
If $|\phi'|\le L_\phi$ on the chart, the derivative of $\arctan$ is bounded above and below on $[-L_\phi,L_\phi]$; hence
\[
A_{\bar t}(r)
\asymp_\Gamma
\mu_\kappa(J_r).
\]
Moreover,
\[
|J_{\tilde t}|\le 2\sqrt{1+L_\phi^2}\,\tilde t,
\qquad
|J_{\tilde t/2}|\ge \tilde t,
\]
so $J_{\tilde t/2}\subset J_{\tilde t}$ occupies a structurally fixed fraction of the length of $J_{\tilde t}$. Since the working charts have intrinsic length at most $r_*$, the second part of Lemma~\ref{lem:doubling-local-comparison} gives
\[
\mu_\kappa(J_{\tilde t/2})
\gtrsim_{\Gamma,D}
\mu_\kappa(J_{\tilde t}).
\]
Consequently,
\[
A_{\bar t}(\tilde t/2)
\gtrsim_{\Gamma,D}
A_{\bar t}(\tilde t).
\]
Convexity makes $A_{\bar t}$ increasing, and therefore
\[
\frac{\tilde t}{2}A_{\bar t}(\tilde t/2)
\le
\int_{\tilde t/2}^{\tilde t}A_{\bar t}(r)\,\mathrm dr
\le
q_\phi(\bar t,\tilde t)
\le
\tilde t\,A_{\bar t}(\tilde t).
\]
This proves
\[
q_\phi(\bar t,\tilde t)
\asymp_{\Gamma,D}
\tilde t\,A_{\bar t}(\tilde t).
\]

Finally, if $\vartheta(t)=\arctan\phi'(t)$ is the tangent angle, bounded slope gives
\[
A_{\bar t}(\tilde t)
\asymp_\Gamma
|\vartheta(\bar t+\tilde t)-\vartheta(\bar t-\tilde t)|.
\]
Since the angular range of the chart is less than $\theta_0<\pi/2$, Euclidean distance between the normals is comparable to this angular difference. Thus
\[
A_{\bar t}(\tilde t)
\asymp_\Gamma
|\mathcal N(\bar t-\tilde t)-\mathcal N(\bar t+\tilde t)|,
\]
which is the second assertion.
\end{proof}

\begin{lemma}[Near-diagonal implies few caps]\label{lem:near-diagonal-few-caps}
Let $\mathcal P_R$ be an admissible terminal partition. For each $C_{\mathrm{diag}}\ge1$, if
\[
q_\phi(\bar t,\tilde t)\le C_{\mathrm{diag}}R^{-1},
\]
then $[\bar t-\tilde t,\bar t+\tilde t]$ crosses only $O_{\Gamma,D,C_{\mathrm{diag}}}(1)$ complete terminal caps.
\end{lemma}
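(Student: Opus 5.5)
We argue by contradiction, with a counting argument on the caps inside the interval. Set $J_{\tilde t}=[\bar t-\tilde t,\bar t+\tilde t]$ and suppose it contains $N$ complete consecutive terminal caps $I_{a},\dots,I_{a+N-1}$. The goal is to show that $q_\phi(\bar t,\tilde t)$ grows at least linearly in $N$ times $R^{-1}$, up to constants. The hypothesis $q_\phi\le C_{\mathrm{diag}}R^{-1}$ then bounds $N$.

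The first step is to convert $q_\phi$ into length times turning. By Lemma~\ref{lem:q-separation-doubling} and its proof,
\[
q_\phi(\bar t,\tilde t)\asymp_{\Gamma,D}\tilde t\,A_{\bar t}(\tilde t)\asymp_{\Gamma,D}\tilde t\,\mu_\kappa(J_{\tilde t}),
\]
where $J_{\tilde t}$ is identified with its image arc. This step uses only the graph chart with bounded slope, so that parameter length and arc length are comparable.

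The second step is a lower bound for the right-hand side in terms of the caps.
\begin{itemize}
\item For turning, $\mu_\kappa(J_{\tilde t})\ge\sum_{i}\mu_\kappa(I_i)\gtrsim\sum_i\alpha_i$, using $\alpha_I\asymp_\Gamma\mu_\kappa(I)$ on charts of bounded angular range.
\item For length, $2\tilde t\gtrsim\sum_i|I_i|$.
\end{itemize}
Hence
\[
q_\phi\gtrsim\Bigl(\sum_i|I_i|\Bigr)\Bigl(\sum_i\alpha_i\Bigr)\ge\sum_i|I_i|\alpha_i\asymp_{\Gamma,D}N R^{-1},
\]
by the product property (1) of Definition~\ref{def:terminal-admissible}. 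The cross terms are nonnegative, so dropping them is legitimate. Combining this with the hypothesis gives $N\lesssim_{\Gamma,D}C_{\mathrm{diag}}$, which is the claim.

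The point needing care is the first step: the comparability $q_\phi\asymp\tilde t\,\mu_\kappa(J_{\tilde t})$ must hold uniformly, including near chart edges. Lemma~\ref{lem:q-separation-doubling} gives it directly, provided $J_{\tilde t}$ lies in a working chart of length at most $r_*$, which is standing.

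Only the easy direction $q_\phi\ge\int_{\tilde t/2}^{\tilde t}A_{\bar t}\gtrsim\tilde t\,A_{\bar t}(\tilde t/2)$ is really needed. This requires $\mu_\kappa(J_{\tilde t/2})\gtrsim\mu_\kappa(J_{\tilde t})$, which is where doubling enters. Alternatively, one can bypass doubling entirely by working with a tail of the caps. For $r\ge\tilde t/2$, the arc $J_r$ contains at least the caps in the central half of the family. Restricting the sum to those roughly $N/2$ caps (after discarding $O(1)$ of them) gives the same linear-in-$N$ lower bound.

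It also remains to ensure that "crosses complete caps" means contained caps, and that a partially covered cap at either end adds only $O(1)$ to the count.
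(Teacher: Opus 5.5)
Your main argument is correct and is essentially the paper's proof. Both use Lemma~\ref{lem:q-separation-doubling} to write $q_\phi(\bar t,\tilde t)\asymp_{\Gamma,D}\tilde t\,\partial_{\tilde t}q_\phi(\bar t,\tilde t)$, bound the two factors below by $\sum_j|I_j|$ and $\sum_j\alpha_j$, and conclude from the product condition. The only difference is that the paper applies Cauchy--Schwarz to get $n^2R^{-1}$, while you drop the cross terms to get $NR^{-1}$, which works equally well.

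Your optional ``doubling-free'' aside is unnecessary and not justified as phrased. Cap lengths vary across the family, so the central half of $[\bar t-\tilde t,\bar t+\tilde t]$ need not contain roughly $N/2$ caps. If you want such a bypass, use $q_\phi(\bar t,\tilde t)=\int_{-\tilde t}^{\tilde t}(\tilde t-|s|)\phi''(\bar t+s)\,\mathrm ds$. There each cap $I_j$ other than the two outermost carries weight $\gtrsim_{\Gamma,D}|I_j|$, because a neighbouring cap separates it from the endpoints. This gives $q_\phi\gtrsim_{\Gamma,D}(N-2)R^{-1}$ directly.
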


\begin{proof}
If it crosses $n$ caps $I_j$, then
\[
\tilde t\gtrsim_\Gamma\sum_{j=1}^n|I_j|,
\qquad
\partial_{\tilde t}q_\phi(\bar t,\tilde t)\gtrsim_\Gamma\sum_{j=1}^n\alpha_j.
\]
By Lemma~\ref{lem:q-separation-doubling} and Cauchy--Schwarz,
\[
q_\phi(\bar t,\tilde t)
\gtrsim_{\Gamma,D}
\left(\sum_j|I_j|\right)
\left(\sum_j\alpha_j\right)
\ge
\left(\sum_j\sqrt{|I_j|\alpha_j}\right)^2
\asymp_{\Gamma,D} n^2R^{-1}.
\]
The hypothesis forces $n=O_{\Gamma,D,C_{\mathrm{diag}}}(1)$.
\end{proof}

\subsection{Rectangular localization of terminal caps}

In this subsection set $\delta=R^{-1}$. Regard each graph chart as an interior subchart of a slightly larger convex chart with the same structural slope and doubling controls. Because all dilations below have fixed factor, caps near artificial chart boundaries are treated uniformly; global endpoints of an arc are handled directly through neighborhoods of $\gamma(I)$, without extending the curve beyond its domain. The global assembly uses a finite cover with margin and a subordinate partition of unity. For $R$ in a bounded interval, the partition contains $O_{\Gamma,D}(1)$ caps and the proposition follows after enlarging the constant. Thus the proof may assume that $R$ is sufficiently large that $\delta$ is smaller than the fixed chart margin.

For a terminal cap $I$, write
\[
\Pi_I=\Pi(I,\delta),
\]
with the notation from the preliminaries of this section.

\begin{lemma}[Terminal arc and enlarged neighborhood]\label{lem:terminal-rectangle-curve}
Let $\mathcal P_R$ be an admissible terminal partition. For fixed $A\ge1$, there exists $C=C(\Gamma,D,A)$ such that, for every terminal cap $I$,
\[
\gamma(I)+B(0,A\delta)\subset C\Pi_I.
\]
Moreover, if $J$ is the arc-length interval of the chart and we define the truncated enlarged interval
\[
I^{(A)}:=J\cap\{t:|t-t_I|\le C|I|\},
\]
then $I^{(A)}$ meets only $O_{\Gamma,D,A}(1)$ terminal caps, and all of them have length and angular width comparable to those of $I$.
\end{lemma}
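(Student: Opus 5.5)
The plan is to prove the inclusion by reducing it to the linearization bound of Lemma~\ref{lem:product-linearization}, and to obtain the second assertion directly from the enlargement property (4) of Definition~\ref{def:terminal-admissible}.

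For the inclusion, fix a terminal cap $I$ with midpoint $t_I$. Write a point of $\gamma(I)+B(0,A\delta)$ as $\gamma(t)+w$ with $t\in I$ and $|w|\le A\delta$. Decompose $\gamma(t)-\gamma(t_I)$ in the orthonormal frame $T(t_I),\mathcal N(t_I)$.
\begin{itemize}
\item The tangential component is bounded by $|t-t_I|\le|I|$, since the chart is parametrized by arc length.
\item The normal component is at most $\sup_{s\in I}\operatorname{dist}(\gamma(s),\gamma(t_I)+\mathbb RT(t_I))$. By Lemma~\ref{lem:product-linearization} this is $\lesssim_\Gamma|I|\alpha_I$.
\item By property (1) of admissibility, $|I|\alpha_I\asymp_{\Gamma,D}\delta$.
\end{itemize}
Adding $w$ contributes at most $A\delta$ to both components. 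The tangential component of the point is therefore at most $|I|+A\delta$. The normal component is $\lesssim_{\Gamma,D}(1+A)\delta$.

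The only subtle point is absorbing $A\delta$ into the tangential length $2C|I|$. For this I will use $\delta\asymp_{\Gamma,D}|I|\alpha_I$ together with $\alpha_I\lesssim_\Gamma 1$, which holds because the angular range of the chart is less than $\theta_0<\pi/2$. This gives $\delta\lesssim_{\Gamma,D}|I|$, so $A\delta\lesssim_{\Gamma,D}A|I|$. Choosing $C=C(\Gamma,D,A)$ large then places both components inside the dilated rectangle $C\Pi(I,\delta)$.

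For the second assertion, note that $I^{(A)}$, defined with this constant $C$, is exactly the truncated enlarged interval $I^{[C]}$ of Definition~\ref{def:terminal-admissible}. Property (4) applied with factor $C$ then gives two facts. First, $I^{(A)}$ meets only $O_{\Gamma,D,C}(1)=O_{\Gamma,D,A}(1)$ caps. Second, each such cap $K$ has $|K|\asymp|I|$ and $\alpha_K\asymp\alpha_I$.

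If one prefers not to invoke (4) as an axiom, for the partition of Lemma~\ref{lem:terminal-partition-general} it follows from slow variation, as shown after Definition~\ref{def:terminal-admissible}. Any $u\in I^{(A)}$ satisfies $|u-t_I|\lesssim r_R(\gamma(t_I))$, so Lemma~\ref{lem:slow-variation} gives comparable terminal scales. Then \eqref{eq:terminal-cap-general} gives comparable cap sizes, and a length count bounds the number of caps.

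Near the boundary of the chart, truncation by $J$ only shrinks sets, so no case analysis is needed. The main (mild) obstacle is the absorption of $A\delta$ into the tangential length, which requires the uniform bound on $\alpha_I$ coming from the fixed chart angular range.
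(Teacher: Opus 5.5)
Your proposal is correct and follows the same route as the paper's proof. You decompose $\gamma(t)-\gamma(t_I)$ in the frame $T(t_I),\mathcal N(t_I)$, bound the normal part by Lemma~\ref{lem:product-linearization} together with the admissibility product condition, absorb $A\delta$ tangentially through $\delta\lesssim_{\Gamma,D}|I|$ (which follows from $\alpha_I\lesssim_\Gamma 1$), and read off the second assertion from property (4) of Definition~\ref{def:terminal-admissible} with factor $C$; your truncation remark also matches how the paper treats chart endpoints.
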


\begin{proof}
Let $L=|I|$. By Lemma~\ref{lem:product-linearization}, for $t\in I$,
\[
\gamma(t)-\gamma(t_I)
=
a_tT(t_I)+b_t\mathcal N(t_I),
\]
with
\[
|a_t|\lesssim_\Gamma L,
\qquad
|b_t|\lesssim_\Gamma L\alpha_I
\lesssim_{\Gamma,D}\delta,
\]
where we used the admissibility product condition. Since $\alpha_I$ is uniformly bounded by the angular range of the chart, the same condition also gives
\[
\delta\lesssim_{\Gamma,D} L.
\]
If $e\in B(0,A\delta)$, its tangential and normal components are, respectively, $O_{\Gamma,D,A}(L)$ and $O_A(\delta)$. By the definition of $\Pi_I$, it follows that
\[
\gamma(t)+e\in C\Pi_I
\]
for a constant $C=C(\Gamma,D,A)$, proving the first assertion. This inclusion is one-sided and remains valid when $I$ meets a global endpoint of $\Gamma$.

The second assertion is exactly the local enlargement property in Definition~\ref{def:terminal-admissible}, applied with the fixed factor $C=C(\Gamma,D,A)$. Truncation by an endpoint of $J$ only decreases the length of the enlarged interval and does not alter the conclusion.
\end{proof}

\section{General direct theory}
\label{sec:direct-general}

This section uses the terminal geometry from Section~\ref{sec:terminal-geometry} to prove a direct $L^4$ estimate under the standing assumption that the turning measure is doubling with constant $D$. Bounded multiplicity for terminal sumsets yields essential biorthogonality; the resulting terminal sum is then identified with the adapted energy and used to prove the estimate on balls. The final part establishes local maximality of the cutoff and the radius comparisons that lead to weighted estimates.

For a terminal partition $\mathcal P_R$ of a chart, we identify each interval $I$ with the corresponding subarc $\gamma(I)$. If $f\in L^2(\Gamma,\mathrm d\sigma)$, define
\[
f_I:=f\,\mathbf 1_{\gamma(I)}.
\]
Under the pullback convention fixed in Section~\ref{sec:terminal-geometry}, we also write $f_I=f\,\mathbf 1_I$.

\subsection{Essential biorthogonality and terminal multiplicity}

Following the Córdoba--Fefferman square-function mechanism and Schippa's adaptation to degenerate coverings~\cite{CordobaFefferman1978,Schippa2024}, we use the following operational formulation.

\begin{definition}[Essential biorthogonality]\label{def:essential-biorthogonality}
Let $\{\Omega_I\}$ be a finite family of frequency regions. We say that it is \emph{essentially biorthogonal} if the family of sumsets
\[
\{\Omega_I+\Omega_J\}_{I,J}
\]
has uniformly bounded multiplicity, that is, if there exists a structural constant $C$ such that
\[
\sup_{\zeta\in\mathbb R^2}
\#\{(I,J):\zeta\in\Omega_I+\Omega_J\}
\le C.
\]
For functions $H_I$ with $\operatorname{supp}\widehat H_I\subset\Omega_I$, this property implies, by Plancherel and Cauchy--Schwarz,
\[
\left\|\sum_I H_I\right\|_4^4
\lesssim_C
\sum_{I,J}\|H_IH_J\|_2^2.
\]
\end{definition}

Set $\delta=R^{-1}$ throughout this subsection. We work on the convex charts and with the admissible terminal partitions constructed in the preceding section; in particular, angular separation, the near-diagonal regime, and Lemma~\ref{lem:terminal-rectangle-curve} are available.

\begin{lemma}[Geometric multiplicity of the terminal sum map]\label{lem:terminal-sumset-multiplicity}
Let $\mathcal P_R$ be an admissible terminal partition on a convex chart with doubling turning measure, and let $A\ge1$ be fixed. For each ordered pair $I,J\in\mathcal P_R$, define
\[
\mathcal S_{I,J}
=
\gamma(I)+\gamma(J)+B(0,2A\delta).
\]
Then the family $\{\mathcal S_{I,J}\}_{I,J}$ has multiplicity
\[
O_{\Gamma,D,A}(1).
\]
More precisely, the proof shows that if two terminal pairs produce sums at distance $O_A(\delta)$, then the corresponding caps are at combinatorial distance $O_{\Gamma,D,A}(1)$, up to interchange of the two summands.
\end{lemma}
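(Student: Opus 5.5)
The plan is to prove the more precise statement. Suppose $\zeta\in\mathcal S_{I,J}\cap\mathcal S_{I',J'}$. We show that, after possibly swapping $I'$ and $J'$, the caps $I,I'$ and $J,J'$ are at index distance $O_{\Gamma,D,A}(1)$. Multiplicity $O(1)$ then follows at once. The count is not $I$ against all $J$: we fix a reference pair through $\zeta$, and every other pair through $\zeta$ lies in a bounded combinatorial neighbourhood of it, so there are $O(1)$ such pairs.

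To set up coordinates, write $\zeta=\gamma(u)+\gamma(v)+e$ and $\zeta=\gamma(u')+\gamma(v')+e'$, with $u\le v$, $u'\le v'$, $u\in I$, $v\in J$ (up to ordering), and $|e|,|e'|\le2A\delta$. In the graph chart, \eqref{eq:sum-map-center-distance} gives the following. The first coordinates yield $|\bar t-\bar t'|\lesssim A\delta$. The second coordinates yield
\[
|2\phi(\bar t)+q_\phi(\bar t,\tilde t)-2\phi(\bar t')-q_\phi(\bar t',\tilde t')|\lesssim A\delta.
\]
Since $\phi'$ is bounded, this becomes $|q_\phi(\bar t,\tilde t)-q_\phi(\bar t,\tilde t')|\lesssim A\delta$ plus the error $|q_\phi(\bar t,\tilde t')-q_\phi(\bar t',\tilde t')|$. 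We control that error by $|\partial_{\bar t}q_\phi|\le|\phi'(\bar t+\tilde t)-\phi'(\bar t)|+|\phi'(\bar t)-\phi'(\bar t-\tilde t)|\lesssim\partial_{\tilde t}q_\phi$. Some care is needed here: $\delta$-shifts in $\bar t$ are absorbed because $\delta\lesssim|K|$ for every cap $K$, and slow variation (Lemma~\ref{lem:slow-variation}) then makes $\partial_{\tilde t}q_\phi$ comparable at nearby points.

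We then split into two regimes.

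\emph{Near-diagonal case.} Suppose $q_\phi(\bar t,\tilde t)\le C\delta$ for a large structural constant $C$. Then the difference bound also makes $q_\phi(\bar t,\tilde t')$ small. Lemma~\ref{lem:near-diagonal-few-caps} shows that both segments $[u,v]$ and $[u',v']$ cross $O(1)$ caps. Both contain points within $O(\delta)$ of $\bar t$, and the caps near $\bar t$ have length $\gtrsim\delta$. Property (4) of Definition~\ref{def:terminal-admissible} then places all four caps in a bounded index window.

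\emph{Transverse case.} Suppose $q_\phi(\bar t,\tilde t)> C\delta$. The function $q_\phi(\bar t,\cdot)$ is increasing with derivative $\partial_{\tilde t}q_\phi$. Lemma~\ref{lem:q-separation-doubling} gives $q_\phi\asymp\tilde t\,\partial_{\tilde t}q_\phi$, so
\[
|\tilde t-\tilde t'|\lesssim A\delta/\partial_{\tilde t}q_\phi(\bar t,\tilde t)\asymp A\delta\,\tilde t/q_\phi(\bar t,\tilde t).
\]
The main step, and the expected obstacle, is to show that this displacement is $O(|I|+|J|)$, that is, $\delta/\partial_{\tilde t}q_\phi\lesssim\min(|I|,|J|)$-ish. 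Here $\partial_{\tilde t}q_\phi\asymp|\mathcal N(u)-\mathcal N(v)|\gtrsim\alpha_I+\alpha_J$ whenever $I\not\sim J$, by \eqref{eq:angular-separation-general}. Hence $\delta/\partial_{\tilde t}q_\phi\lesssim\delta/\alpha_I\asymp|I|$, and similarly it is $\lesssim|J|$. If instead $I\sim J$, then $q_\phi\lesssim\delta$ by the product condition and Cauchy--Schwarz as in Lemma~\ref{lem:near-diagonal-few-caps}, which is the near-diagonal case for $C$ large.

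It follows that $u'=\bar t'-\tilde t'$ lies within $O(|I|)$ of $u$, and $v'$ lies within $O(|J|)$ of $v$. We must also handle the orientation: the transversality comparison $\partial_{\tilde t}q_\phi(\bar t,s)\gtrsim\partial_{\tilde t}q_\phi(\bar t,\tilde t)$ is needed along the interval between $\tilde t$ and $\tilde t'$, which holds by monotonicity of $A_{\bar t}$ on one side and by doubling (Lemma~\ref{lem:doubling-local-comparison}) on the other. Property (4) of admissibility finally shows that $I'$ lies among $O(1)$ caps around $I$, and $J'$ among $O(1)$ caps around $J$. The swap of summands enters only through the ordering convention $u\le v$.

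Last, we assemble. Multiplicity in a single chart follows, and the finite chart cover with margin handles pairs drawn from different charts, since they are finitely many chart pairs with $O(1)$ overlap each.
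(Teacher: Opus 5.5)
Your argument has the same architecture as the paper's. It uses the same sum-map coordinates and splits according to the size of the transverse defect. The near-diagonal regime goes through Lemma~\ref{lem:near-diagonal-few-caps} and the enlargement property. The transverse regime goes through monotonicity of $\partial_{\tilde t}q_\phi$, Lemma~\ref{lem:q-separation-doubling}, angular separation of nonneighboring caps, and the product condition.

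The one real difference is in the transverse regime. The paper first relabels the pairs so that $\tilde t\ge\tilde t'$. Then $q_\phi(\bar t,\tilde t')$ is defined even at a global endpoint, and monotonicity alone gives $(\tilde t-\tilde t')\,\partial_{\tilde t}q_\phi(\bar t,\tilde t')\lesssim\delta$. The paper spends its doubling comparison on moving the center from $\bar t$ to $\bar t'$, so that it can use the separation of the primed pair $(u',v')$. You use the separation of the unprimed pair directly and spend doubling on the mean-value lower bound instead.

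As written, that step has a gap.
\begin{itemize}
\item \emph{Case $\tilde t'<\tilde t$.} The second part of Lemma~\ref{lem:doubling-local-comparison} gives $\partial_{\tilde t}q_\phi(\bar t,s)\gtrsim_{\Gamma,D}\partial_{\tilde t}q_\phi(\bar t,\tilde t)$ only for $s$ at least a fixed fraction of $\tilde t$. You have not shown that $\tilde t'$ is comparable to $\tilde t$. It does follow from your transverse hypothesis. For $C$ large, $q_\phi(\bar t,\tilde t')\ge q_\phi(\bar t,\tilde t)-O_{\Gamma,A}(\delta)\ge\tfrac12 q_\phi(\bar t,\tilde t)$. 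Also $r\mapsto q_\phi(\bar t,r)/r$ is nondecreasing, being the average of the nondecreasing function $\partial_{\tilde t}q_\phi(\bar t,\cdot)$ over $[0,r]$. Hence $\tilde t'\ge\tilde t/2$, and then doubling applies.
\item \emph{Case $\tilde t'>\tilde t$.} Monotonicity suffices for the lower bound. However, $[\bar t-\tilde t',\bar t+\tilde t']$ may leave the arc by $O(\delta)$ at a global endpoint, so $q_\phi(\bar t,\tilde t')$ need not be defined. The cleanest fix is the paper's: relabel so that $\tilde t\ge\tilde t'$ from the start. The defects $q_\phi(\bar t,\tilde t)$ and $q_\phi(\bar t',\tilde t')$ differ by $O(\delta)$, so the regime split is unaffected, and only the first case remains.
\end{itemize}

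Two smaller remarks.
\begin{itemize}
\item The error from shifting $\bar t$ to $\bar t'$ is $O(\delta)$ simply because $|\partial_{\bar t}q_\phi|\le4\sup|\phi'|$; slow variation is not needed.
\item For $I\sim J$, the bound $q_\phi\lesssim\delta$ comes from $q_\phi\le\tilde t\,\partial_{\tilde t}q_\phi\lesssim|I|\alpha_I$ together with comparability of the boundedly many caps between $u$ and $v$. It does not come from Cauchy--Schwarz, which gives the reverse inequality in Lemma~\ref{lem:near-diagonal-few-caps}.
\end{itemize}
With these repairs your proof is complete.
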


\begin{proof}
Fix $\zeta\in\mathbb R^2$ and suppose that $\zeta\in\mathcal S_{I,J}$. There exist $u\in I$, $v\in J$ such that
\begin{equation}\label{eq:approx-sum-general}
|\zeta-\gamma(u)-\gamma(v)|\lesssim_A\delta.
\end{equation}
Order the points of each pair so that $u\le v$; interchanging the two endpoints only introduces a constant factor.

Pass to graph coordinates $\gamma(t)=(t,\phi(t))$. Since the slope is bounded, parameter length and arc length are comparable, so we continue to write $|K|$ for either. Given two pairs $(u,v)$ and $(u',v')$ satisfying \eqref{eq:approx-sum-general}, define
\[
\bar t=\frac{u+v}{2},\quad \tilde t=\frac{v-u}{2},
\qquad
\bar t'=\frac{u'+v'}2,\quad \tilde t'=\frac{v'-u'}2.
\]
Interchanging the two pairs if necessary, assume from now on that
\[
\tilde t\ge \tilde t'.
\]
Then
\[
[\bar t-\tilde t',\bar t+\tilde t']\subset[\bar t-\tilde t,\bar t+\tilde t]=[u,v],
\]
so $q_\phi(\bar t,\tilde t')$ is defined even when the first pair meets a global endpoint of the chart. Moreover, if the parameter domain of the chart is $[a,b]$, the admissible centers for the fixed radius $\tilde t'$ form the interval $[a+\tilde t',b-\tilde t']$. Both $\bar t$ and $\bar t'$ belong to this interval; by convexity, the entire segment between the two centers is admissible. In particular, every Lipschitz comparison in the $\bar t$ variable with radius $\tilde t'$ remains inside the domain of $q_\phi$.

The first coordinate of the sums gives
\begin{equation}\label{eq:center-close-general}
|\bar t-\bar t'|\lesssim_{\Gamma,A}\delta.
\end{equation}
By \eqref{eq:sum-map-center-distance}, the second coordinate and the bounded slope imply
\[
|q_\phi(\bar t,\tilde t)-q_\phi(\bar t',\tilde t')|\lesssim_{\Gamma,A}\delta.
\]
Moreover, $q_\phi$ is uniformly Lipschitz in the $\bar t$ variable on the chart because $\phi'$ is bounded. Consequently,
\begin{equation}\label{eq:q-close-general}
|q_\phi(\bar t,\tilde t)-q_\phi(\bar t,\tilde t')|\lesssim_{\Gamma,A}\delta.
\end{equation}

Fix a constant $C_{\mathrm{tr}}=C_{\mathrm{tr}}(\Gamma,D,A)$ sufficiently large, larger than the implicit constants in \eqref{eq:center-close-general} and \eqref{eq:q-close-general}, and distinguish two regimes. If
\[
\min\{q_\phi(\bar t,\tilde t),q_\phi(\bar t,\tilde t')\}\le C_{\mathrm{tr}}\delta,
\]
then both defects are $O_{\Gamma,D,A}(\delta)$ by \eqref{eq:q-close-general}; the Lipschitz estimate in $\bar t$, together with the preceding comparison between the second coordinates of the sums, also gives $q_\phi(\bar t',\tilde t')=O_{\Gamma,D,A}(\delta)$. Lemma~\ref{lem:near-diagonal-few-caps} shows that each interval $[u,v]$ and $[u',v']$ crosses only $O_{\Gamma,D,A}(1)$ caps. Let $I_0$ and $I_0'$ be the caps containing $\bar t$ and $\bar t'$, respectively. For every terminal cap $K_0$,
\[
\delta\asymp_{\Gamma,D} |K_0|\alpha_{K_0}\lesssim_{\Gamma,D} |K_0|,
\]
so \eqref{eq:center-close-general} gives $|\bar t-\bar t'|\lesssim_{\Gamma,D,A}|I_0|$. The second assertion of Lemma~\ref{lem:terminal-rectangle-curve}, applied with a sufficiently large fixed factor, implies that $I_0$ and $I_0'$ differ by only $O_{\Gamma,D,A}(1)$ indices. Since the endpoints of each pair lie within $O_{\Gamma,D,A}(1)$ caps of its center cap, all four endpoints belong to the same cluster of $O_{\Gamma,D,A}(1)$ caps. Thus, in the near-diagonal regime there are only $O_{\Gamma,D,A}(1)$ possibilities for $(I,J)$.

Now suppose that both transverse defects are larger than $C_{\mathrm{tr}}\delta$. Since $\tilde t\ge \tilde t'$ was fixed before comparing the defects, convexity of $\phi$ implies that $\partial_{\tilde t}q_\phi(\bar t,r)$ is nondecreasing in $r$, and \eqref{eq:q-close-general} gives
\begin{equation}\label{eq:separation-close-transverse-general}
(\tilde t-\tilde t')\,\partial_{\tilde t}q_\phi(\bar t,\tilde t')
\lesssim_{\Gamma,A}\delta.
\end{equation}
Before comparing the endpoints of the primed pair, we must justify moving the center from $\bar t'$ to $\bar t$. Bounded slope gives
\[
q_\phi(\bar t,\tilde t')
\le \tilde t'\,\partial_{\tilde t}q_\phi(\bar t,\tilde t')
\lesssim_\Gamma \tilde t'.
\]
Since $q_\phi(\bar t,\tilde t')\ge C_{\mathrm{tr}}\delta$, it follows that
\[
\tilde t'\gtrsim_\Gamma C_{\mathrm{tr}}\delta.
\]
Combining this with \eqref{eq:center-close-general} and choosing $C_{\mathrm{tr}}$ sufficiently large yields
\[
|\bar t-\bar t'|\le \frac12\tilde t'.
\]
Since $\tilde t'\le\tilde t$, the interval $[\bar t-\tilde t',\bar t+\tilde t']$ is contained in $[u,v]$, and $[\bar t'-\tilde t',\bar t'+\tilde t']=[u',v']$; both therefore remain inside the chart. Denote
\[
J=\gamma([\bar t-\tilde t',\bar t+\tilde t']),
\qquad
J'=\gamma([\bar t'-\tilde t',\bar t'+\tilde t']).
\]
The bound on the displacement of the centers implies that $J\cap J'$ occupies a structurally fixed fraction of the length of each. Applying the second part of Lemma~\ref{lem:doubling-local-comparison} twice, and using that the graph parametrization has speed bounded above and below, gives
\[
\mu_\kappa(J)\asymp_{\Gamma,D}\mu_\kappa(J').
\]
The same comparison between slope difference and turning mass used in Lemma~\ref{lem:q-separation-doubling} then gives
\[
\partial_{\tilde t}q_\phi(\bar t,\tilde t')
\asymp_{\Gamma,D}
\partial_{\tilde t}q_\phi(\bar t',\tilde t')
\asymp_\Gamma
|\mathcal N(u')-\mathcal N(v')|.
\]

The pair $(u',v')$ cannot lie in neighboring caps: if $I'\sim J'$, then the lengths and angular widths of the caps between the two endpoints are comparable and
\[
\tilde t'\lesssim_{\Gamma,D}|I'|,
\qquad
\partial_{\tilde t}q_\phi(\bar t,\tilde t')
\lesssim_{\Gamma,D}\alpha_{I'},
\]
so, using $q_\phi(\bar t,\tilde t')\le \tilde t'\partial_{\tilde t}q_\phi(\bar t,\tilde t')$,
\[
q_\phi(\bar t,\tilde t')\lesssim_{\Gamma,D}|I'|\alpha_{I'}\asymp_{\Gamma,D}\delta,
\]
a contradiction if $C_{\mathrm{tr}}$ was chosen sufficiently large.

Hence $I'$ and $J'$ are not neighbors. The angular separation \eqref{eq:angular-separation-general} and Lemma~\ref{lem:q-separation-doubling} give
\[
\partial_{\tilde t}q_\phi(\bar t,\tilde t')
\gtrsim_{\Gamma,D}
\alpha_{I'}+\alpha_{J'}.
\]
Inserting this into \eqref{eq:separation-close-transverse-general} and using the product condition,
\[
|\tilde t-\tilde t'|
\lesssim_{\Gamma,D,A}
\frac{\delta}{\alpha_{I'}+\alpha_{J'}}
\lesssim_{\Gamma,D}
\min\{|I'|,|J'|\}.
\]
Moreover,
\[
\delta\lesssim_{\Gamma,D}\min\{|I'|,|J'|\},
\]
since $\alpha_{I'},\alpha_{J'}$ are uniformly bounded by the angular range of the chart. Together with \eqref{eq:center-close-general}, this gives
\[
|u-u'|+|v-v'|
\lesssim_{\Gamma,D,A}
\min\{|I'|,|J'|\}.
\]
The second assertion of Lemma~\ref{lem:terminal-rectangle-curve}, applied with a sufficiently large fixed factor, then implies that $I$ differs from $I'$ by only $O_{\Gamma,D,A}(1)$ indices, and likewise for $J$ and $J'$. Thus the transverse regime also has multiplicity $O_{\Gamma,D,A}(1)$.

The preceding conclusion shows that, for fixed $\zeta$, only $O_{\Gamma,D,A}(1)$ ordered pairs $(I,J)$ can satisfy $\zeta\in\mathcal S_{I,J}$. This is precisely the claimed multiplicity.
\end{proof}

\begin{proposition}[Essential biorthogonality]\label{prop:biortho-general}
Let $\mathcal P_R$ be an admissible terminal partition on a convex chart with doubling turning measure. If
\[
\operatorname{supp}\widehat H_I
\subset
\gamma(I)+B(0,A\delta)
\]
for a fixed factor $A$, then
\[
\left\|\sum_{I\in\mathcal P_R}H_I\right\|_4^4
\lesssim_{\Gamma,D,A}
\sum_{I,J}\|H_IH_J\|_2^2.
\]
\end{proposition}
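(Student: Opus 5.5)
The plan is to reduce the statement to the abstract mechanism recorded in Definition~\ref{def:essential-biorthogonality}, using Lemma~\ref{lem:terminal-sumset-multiplicity} to supply the bounded multiplicity. First expand the fourth power:
\[
\Bigl\|\sum_I H_I\Bigr\|_4^4=\Bigl\|\sum_{I,J}H_IH_J\Bigr\|_2^2 .
\]
By the convolution theorem, $\widehat{H_IH_J}=\widehat H_I*\widehat H_J$. Under the hypothesis $\operatorname{supp}\widehat H_I\subset\gamma(I)+B(0,A\delta)$, this Fourier transform is supported in
\[
\gamma(I)+\gamma(J)+B(0,2A\delta)=\mathcal S_{I,J}.
\]
If the $\widehat H_I$ are only distributions, one first mollifies at a scale much smaller than $\delta$, which changes the supports only by a harmless enlargement, and then passes to the limit. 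Plancherel then gives
\[
\Bigl\|\sum_{I,J}H_IH_J\Bigr\|_2^2=\int\Bigl|\sum_{I,J}\widehat{H_IH_J}(\zeta)\Bigr|^2\,\mathrm d\zeta .
\]

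Fix $\zeta$. By Lemma~\ref{lem:terminal-sumset-multiplicity}, applied with the same factor $A$, the number of ordered pairs $(I,J)$ with $\zeta\in\mathcal S_{I,J}$ is at most $M=O_{\Gamma,D,A}(1)$, uniformly in $\zeta$ and in $R$. Cauchy--Schwarz over these at most $M$ nonzero terms gives, pointwise,
\[
\Bigl|\sum_{I,J}\widehat{H_IH_J}(\zeta)\Bigr|^2\le M\sum_{I,J}\bigl|\widehat{H_IH_J}(\zeta)\bigr|^2 .
\]
Integrating in $\zeta$ and applying Plancherel once more to each term yields
\[
\Bigl\|\sum_I H_I\Bigr\|_4^4\le M\sum_{I,J}\|H_IH_J\|_2^2,
\]
which is the claim. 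The partition is finite for each fixed $R$, as shown in the proof of Lemma~\ref{lem:terminal-partition-general}, so no convergence issues arise in the double sum.

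The only substantive input is the uniform multiplicity bound, and that has already been established in Lemma~\ref{lem:terminal-sumset-multiplicity} through its split into near-diagonal and transverse regimes. The one point to verify is that the $2A\delta$-neighbourhood arising from the sum of two $A\delta$-neighbourhoods matches the sets $\mathcal S_{I,J}$ defined there. It does, since $B(0,A\delta)+B(0,A\delta)=B(0,2A\delta)$. Ordered pairs $(I,J)$ and $(J,I)$ are counted separately, which is consistent with the lemma's statement for ordered pairs. The proof is therefore essentially immediate, and I expect no genuine obstacle.
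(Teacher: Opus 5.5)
Your proposal is correct and follows the paper's proof: the paper also notes that $\widehat{H_IH_J}$ is supported in $\mathcal S_{I,J}$, invokes Lemma~\ref{lem:terminal-sumset-multiplicity} for bounded multiplicity, and concludes by Plancherel and Cauchy--Schwarz. You simply write out the identity $\|\sum_I H_I\|_4^4=\|\sum_{I,J}H_IH_J\|_2^2$ and the pointwise counting step, which the paper leaves implicit.
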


\begin{proof}
With $\mathcal S_{I,J}$ as in Lemma~\ref{lem:terminal-sumset-multiplicity}, we have
\[
\operatorname{supp}\widehat{H_IH_J}
\subset\mathcal S_{I,J},
\]
and Lemma~\ref{lem:terminal-sumset-multiplicity} gives multiplicity $O_{\Gamma,D,A}(1)$ for the frequency supports of the products. Plancherel and Cauchy--Schwarz directly yield the claimed estimate.
\end{proof}

\subsection{General energy and direct theorem}

With the global terminal angular scale defined in \eqref{eq:rho-general}, set
\begin{equation}\label{eq:energy-general}
\mathcal E_R^\Gamma(f)
=
\iint_{\Gamma^2}
\frac{|f(\xi)|^2|f(\eta)|^2}
{|\mathcal N(\xi)-\mathcal N(\eta)|+\max\{\rho_R(\xi),\rho_R(\eta)\}}
\,\mathrm d\sigma(\xi)\mathrm d\sigma(\eta).
\end{equation}

\begin{proposition}[General energy--sum equivalence]\label{prop:energy-sum-general}
Let $U$ be one of the working charts and let $\mathcal P_R$ be a terminal partition of $U$. If $f\in L^2(\Gamma,\mathrm d\sigma)$ is supported in $U$, then
\begin{align*}
\mathcal E_R^\Gamma(f)
\asymp_{\Gamma,D}{}&
\sum_I\frac{\|f_I\|_2^4}{\rho_I}\\
&+\sum_{I\not\sim J}
\iint_{I\times J}
\frac{|f(\xi)|^2|f(\eta)|^2}{|\mathcal N(\xi)-\mathcal N(\eta)|}
\,\mathrm d\sigma(\xi)\mathrm d\sigma(\eta).
\end{align*}
\end{proposition}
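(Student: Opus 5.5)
The plan is to decompose $\mathcal E_R^\Gamma(f)$ over the partition $\mathcal P_R$. Since $f$ is supported in $U$, we have $f=\sum_I f_I$, so
\[
\mathcal E_R^\Gamma(f)=\sum_{I,J}\iint_{I\times J}K_R(\xi,\eta)\,|f(\xi)|^2|f(\eta)|^2\,\mathrm d\sigma\,\mathrm d\sigma,
\]
where $K_R$ denotes the kernel in \eqref{eq:energy-general}. We split the pairs $(I,J)$ into neighboring pairs $I\sim J$ and non-neighboring pairs $I\not\sim J$. On each class we compare the kernel pointwise with the corresponding term on the right-hand side. Summing these pointwise comparisons gives both inequalities at once.

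\emph{Non-neighboring pairs.} Let $I\not\sim J$, $\xi\in I$ and $\eta\in J$. By \eqref{eq:terminal-cap-general}, $\rho_R(\xi)\asymp\alpha_I$ and $\rho_R(\eta)\asymp\alpha_J$. By the angular separation \eqref{eq:angular-separation-general},
\[
|\mathcal N(\xi)-\mathcal N(\eta)|\ge\operatorname{dist}(\mathcal N(I),\mathcal N(J))\gtrsim_{\Gamma,D}\alpha_I+\alpha_J\gtrsim\max\{\rho_R(\xi),\rho_R(\eta)\}.
\]
Hence the denominator of $K_R$ is comparable to $|\mathcal N(\xi)-\mathcal N(\eta)|$. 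Integrating, these pairs reproduce exactly the off-diagonal sum, with two-sided constants depending on $\Gamma,D$.

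\emph{Neighboring pairs.} Let $I\sim J$, so $|j-\ell|\le n_{\mathrm{nb}}$. Consecutive caps are comparable, so iterating this $n_{\mathrm{nb}}=O_{\Gamma,D}(1)$ times gives $\alpha_J\asymp\alpha_I\asymp\rho_I$. The set $\mathcal N(I\cup\cdots\cup J)$ is an arc of length $\lesssim\sum\alpha\lesssim\rho_I$, since the Gauss map is monotone on the convex chart. Therefore $|\mathcal N(\xi)-\mathcal N(\eta)|\lesssim\rho_I$, while $\max\{\rho_R(\xi),\rho_R(\eta)\}\asymp\rho_I$. This gives $K_R\asymp\rho_I^{-1}$ on $I\times J$, so
\[
\sum_{I\sim J}\iint_{I\times J} K_R\,|f|^2|f|^2\asymp\sum_{I\sim J}\frac{\|f_I\|_2^2\|f_J\|_2^2}{\rho_I}.
\]

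It remains to show that this neighbor sum is comparable to $\sum_I\|f_I\|_2^4/\rho_I$.

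\emph{Lower bound.} The diagonal terms $J=I$ appear in the neighbor sum, so the neighbor sum dominates $\sum_I\|f_I\|_2^4/\rho_I$.

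\emph{Upper bound.} Apply $ab\le\tfrac12(a^2+b^2)$ to get
\[
\frac{\|f_I\|_2^2\|f_J\|_2^2}{\rho_I}\le\frac12\left(\frac{\|f_I\|_2^4}{\rho_I}+\frac{\|f_J\|_2^4}{\rho_I}\right).
\]
Since $\rho_I\asymp\rho_J$ for neighbors, the second term is $\lesssim\|f_J\|_2^4/\rho_J$. Each cap has $O_{\Gamma,D}(1)$ neighbors, so summing gives $\lesssim\sum_I\|f_I\|_2^4/\rho_I$.

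The step I expect to need the most care is the neighboring-pair comparison. It requires that comparability of consecutive caps propagates over a bounded number of steps with constants depending only on $\Gamma,D$. It also requires controlling the normal diameter of the union of $O(1)$ adjacent caps by $\rho_I$, which is where monotonicity of $\mathcal N$ on the convex chart, and $\alpha\asymp\mu_\kappa$ on a chart of bounded angular range, are used. Everything else is pointwise kernel comparison followed by summation.
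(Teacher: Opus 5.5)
Your proposal is correct and follows the paper's own proof essentially step by step. Both split $\mathcal E_R^\Gamma(f)$ over pairs of caps, use the angular separation \eqref{eq:angular-separation-general} to absorb the cutoff when $I\not\sim J$, and show the kernel is $\asymp\rho_I^{-1}$ on neighboring pairs. Both then close with $ab\le(a^2+b^2)/2$ and bounded degree for the upper bound, and the diagonal terms $I=J$ for the lower bound. Your justification of $|\mathcal N(\xi)-\mathcal N(\eta)|\lesssim\rho_I$, via monotonicity of the Gauss map and iterated comparability of consecutive caps, simply spells out what the paper calls index adjacency.
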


\begin{proof}
Decompose the integral in \eqref{eq:energy-general} according to pairs of caps. If $I\sim J$, slow variation gives $\rho_R\asymp_{\Gamma,D}\rho_I\asymp_{\Gamma,D}\rho_J$ on $I\cup J$, and index adjacency gives $|\mathcal N(\xi)-\mathcal N(\eta)|\lesssim_{\Gamma,D}\rho_I$ on $I\times J$. Therefore
\[
\iint_{I\times J}
\frac{|f(\xi)|^2|f(\eta)|^2}
{|\mathcal N(\xi)-\mathcal N(\eta)|+\max\{\rho_R(\xi),\rho_R(\eta)\}}
\,\mathrm d\sigma(\xi)\mathrm d\sigma(\eta)
\asymp_{\Gamma,D}
\frac{\|f_I\|_2^2\|f_J\|_2^2}{\rho_I}.
\]
Each cap has $O_{\Gamma,D}(1)$ neighbors and neighboring cap scales are comparable; an application of $ab\le(a^2+b^2)/2$ reduces the sum of these terms to the first sum in the statement, while the terms $I=J$ give the reverse bound.

If $I\not\sim J$, \eqref{eq:angular-separation-general} implies
\[
|\mathcal N(\xi)-\mathcal N(\eta)|
\gtrsim_{\Gamma,D}
\alpha_I+\alpha_J
\asymp_{\Gamma,D}
\rho_I+\rho_J
\]
uniformly on $I\times J$. The cutoff in the denominator is then absorbed by the angular separation, and the kernel is comparable to $|\mathcal N(\xi)-\mathcal N(\eta)|^{-1}$. Summing the two regimes gives the equivalence.
\end{proof}

Fix $x_0\in\mathbb R^2$ and a Schwartz function $\psi$ such that $\psi\ge c>0$ on the unit ball and $\widehat\psi$ is supported in a fixed ball, and set
\[
\psi_R(x)=\psi((x-x_0)/R).
\]

\begin{lemma}[General local $L^2$ estimate]\label{lem:local-L2-general}
For every terminal cap $I$,
\[
\int|\psi_R|^4|E_\Gamma f_I|^2\,\mathrm dx
\lesssim_\Gamma R\|f_I\|_2^2.
\]
\end{lemma}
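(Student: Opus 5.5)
The plan is to use Plancherel: expand the weighted integral as a quadratic form in $f_I$ whose kernel is the Fourier transform of $|\psi_R|^4$, and then control that kernel by the arc-length measure of short arcs. In fact the cap structure is not needed; the bound holds for any $g\in L^2(\Gamma)$ supported in a chart, and in particular for $g=f_I$.

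Set $w=|\psi_R|^4$. Expanding the square gives
\[
\int w\,|E_\Gamma g|^2\,\mathrm dx
=\iint \widehat w(\eta-\xi)\,g(\xi)\overline{g(\eta)}\,\mathrm d\sigma(\xi)\mathrm d\sigma(\eta),
\]
up to sign conventions in the argument of $\widehat w$. We have
\[
\widehat w(\zeta)=R^2 e^{-2\pi i x_0\cdot\zeta}\,\widehat{|\psi|^4}(R\zeta).
\]
The function $|\psi|^4$ is Schwartz, so
\[
|\widehat w(\zeta)|\lesssim_N R^2(1+R|\zeta|)^{-N}.
\]
This uses only rapid decay, not compact support. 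If one insists on compact support, one can replace $|\psi|^4$ by $\psi^2\bar\psi^2$, whose transform is supported in a fixed dilate, but that is unnecessary here.

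Apply Schur's test with the kernel $K(\xi,\eta)=R^2(1+R|\xi-\eta|)^{-N}$ on $\Gamma\times\Gamma$. This bounds the form by
\[
\Bigl(\sup_\xi\int_\Gamma K(\xi,\eta)\,\mathrm d\sigma(\eta)\Bigr)\|g\|_2^2 .
\]
For fixed $\xi$, decompose $\Gamma$ into the dyadic shells $\{\eta:|\eta-\xi|\sim 2^jR^{-1}\}$, plus the ball $|\eta-\xi|\le R^{-1}$. Since $\Gamma$ is a compact regular $C^2$ curve, we have the upper regularity bound
\[
\sigma(\Gamma\cap B(\xi,r))\lesssim_\Gamma r
\]
for all $r>0$. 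This holds because a finite cover by bounded-slope graph charts gives the bound for $r$ below a structural radius, and compactness gives $\sigma(\Gamma)<\infty$ for larger $r$. It follows that
\[
\int K(\xi,\eta)\,\mathrm d\sigma(\eta)\lesssim\sum_{j\ge0}R^2\,2^{-jN}\,2^jR^{-1}\lesssim_\Gamma R .
\]
This yields $\int|\psi_R|^4|E_\Gamma f_I|^2\,\mathrm dx\lesssim_\Gamma R\|f_I\|_2^2$, with a constant independent of $R$, $x_0$ and $D$.

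The only point needing care is the upper regularity $\sigma(\Gamma\cap B(\xi,r))\lesssim_\Gamma r$ uniformly in $\xi$ and $r$. Within one chart, the Euclidean ball meets the graph in a set whose parameter projection has length at most $2r$, and arc length is at most $\sqrt{1+L_\phi^2}$ times parameter length. Summing over the finitely many charts of the fixed cover then gives the bound. A cruder alternative is also available: bound the left side by $\|w\|_{L^1}\sup_x|E_\Gamma g|^2$, which is at most $R^2\|g\|_1^2\le R^2|I|\,\|g\|_2^2$. That loses, since $R|I|\gtrsim 1$, so the Schur argument above is the route to take.
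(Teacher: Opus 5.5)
Your proof is correct and is essentially the paper's argument: expand the square, bound the kernel $R^2\widehat{|\psi|^4}(R(\xi-\eta))$ by Schwartz decay, and apply Schur's test. The only cosmetic difference is how you bound $\sup_\xi\int K\,\mathrm d\sigma\lesssim_\Gamma R$: the paper integrates in the chart parameter using $|\gamma(t)-\gamma(t')|\ge c_\Gamma|t-t'|$, while you use dyadic shells together with the upper regularity $\sigma(\Gamma\cap B(\xi,r))\lesssim_\Gamma r$.

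A minor point: $|\psi|^4$ and $\psi^2\bar\psi^2$ are the same function. So no replacement is needed, and $\widehat{|\psi|^4}$ is already compactly supported. That remark plays no role in your argument.
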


\begin{proof}
Expanding the integral produces the kernel
\[
e^{2\pi i x_0\cdot(\xi-\eta)}
R^2\widehat{|\psi|^4}\bigl(-R(\xi-\eta)\bigr),
\qquad \xi,\eta\in\Gamma.
\]
The phase has modulus one. On a graph chart with bounded slope there exists $c_\Gamma>0$ such that
\[
|\gamma(t)-\gamma(t')|\ge c_\Gamma|t-t'|,
\]
and arc-length measure is comparable to $\mathrm dt$. Schwartz decay of $\widehat{|\psi|^4}$ gives, uniformly in a point of the chart,
\[
\int R^2\bigl|\widehat{|\psi|^4}(R(\gamma(t)-\gamma(t')))\bigr|\,\mathrm dt'
\lesssim_\Gamma R.
\]
Schur's test (cf. Stein~\cite{Stein1993}) gives the bound.
\end{proof}

\begin{theorem}[General direct estimate on balls]\label{thm:direct-general-ball}
Let $\Gamma$ be a compact convex $C^2$ curve with no affine subarcs and whose turning measure is doubling with constant $D$. Then, for every $R\ge1$ and $x_0\in\mathbb R^2$,
\[
\int_{B(x_0,R)}|E_\Gamma f(x)|^4\,\mathrm dx
\lesssim_{\Gamma,D}
\mathcal E_R^\Gamma(f).
\]
\end{theorem}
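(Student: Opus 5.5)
We first localize to a single working chart. Using the finite cover with margin and a subordinate partition of unity, write $f=\sum_U f_U$ with $f_U$ supported in a chart $U$. Since there are $O_\Gamma(1)$ charts, $|E_\Gamma f|^4\lesssim_\Gamma\sum_U|E_\Gamma f_U|^4$. Because the kernel of $\mathcal E_R^\Gamma$ is positive, $\mathcal E_R^\Gamma(f_U)\le\mathcal E_R^\Gamma(f)$ holds pointwise in the integrand whenever $|f_U|\le|f|$. So it suffices to treat $f$ supported in one chart $U$. For $R$ in the bounded range $1\le R<R_{\mathrm{ft}}$ (or $R<R_*$) the partition has $O_{\Gamma,D}(1)$ caps and the estimate follows crudely, so we may assume $R$ is large. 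We then bound the ball integral by $c^{-4}\int|\psi_R|^4|E_\Gamma f|^4$ and write $\psi_R E_\Gamma f=\sum_{I\in\mathcal P_R}H_I$ with $H_I=\psi_R E_\Gamma f_I$, where $\mathcal P_R$ is the terminal partition of Lemma~\ref{lem:terminal-partition-general}. The Fourier support of $H_I$ lies in $\gamma(I)+B(0,A\delta)$ with $\delta=R^{-1}$ and $A$ fixed by $\operatorname{supp}\widehat\psi$. Proposition~\ref{prop:biortho-general} then gives
\[
\int|\psi_R|^4|E_\Gamma f|^4\lesssim_{\Gamma,D}\sum_{I,J}\|H_IH_J\|_2^2 .
\]

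Next we split the double sum into neighboring pairs $I\sim J$ and non-neighboring pairs $I\not\sim J$, and match each part with the corresponding term of Proposition~\ref{prop:energy-sum-general}.

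For $I\sim J$, Cauchy--Schwarz gives $\|H_IH_J\|_2^2\le\|H_I\|_4^2\|H_J\|_4^2$. Each $\|H_I\|_4^4$ is then controlled by interpolating the trivial bound $\|H_I\|_\infty$ against the local $L^2$ bound:
\[
\|E_\Gamma f_I\|_\infty\le|I|^{1/2}\|f_I\|_2,
\qquad
\int|\psi_R|^4|E_\Gamma f_I|^2\lesssim R\|f_I\|_2^2
\]
(Lemma~\ref{lem:local-L2-general}). Combining,
\[
\int|\psi_R|^4|E_\Gamma f_I|^4\lesssim R|I|\,\|f_I\|_2^4\asymp_{\Gamma,D}\rho_I^{-1}\|f_I\|_2^4,
\]
using $R|I|\asymp\rho_I^{-1}$. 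Since $\psi$ is only bounded, we really need $|\psi_R|^4\lesssim|\psi_R|^2$ in the appropriate place; we arrange this by choosing $\psi$ bounded, so that $|\psi_R|^8\lesssim|\psi_R|^4$. Because each cap has $O(1)$ neighbors and neighboring scales are comparable, $ab\le(a^2+b^2)/2$ reduces the neighboring part to the diagonal sum $\sum_I\|f_I\|_2^4/\rho_I$.

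For $I\not\sim J$, we drop $|\psi_R|^4\lesssim1$ and apply the bilinear identity, Proposition~\ref{prop:bilinear-identity}, together with Remark~\ref{rem:det-normal-distance}. This yields exactly the off-diagonal term
\[
\iint_{I\times J}\frac{|f|^2|f|^2}{|\mathcal N(\xi)-\mathcal N(\eta)|}
\]
appearing in Proposition~\ref{prop:energy-sum-general}. Summing the two regimes and invoking that proposition gives the theorem.

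The main obstacle is the injectivity hypothesis of Proposition~\ref{prop:bilinear-identity} on $I\times J$. On a convex graph chart the sum map restricted to $u<v$ is injective: the first coordinate fixes $\bar t$, and by Lemma~\ref{lem:q-separation-doubling} $q_\phi(\bar t,\cdot)$ is strictly increasing because there are no affine subarcs. Non-neighboring caps are disjoint and ordered, so only ordered pairs with $u<v$ arise and injectivity holds. Pairs $(I,J)$ and $(J,I)$ are handled symmetrically.
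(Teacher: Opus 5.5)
Your proposal is correct and follows essentially the same route as the paper. Both arguments localize to one chart with a partition of unity, apply Proposition~\ref{prop:biortho-general} to $H_I=\psi_R E_\Gamma f_I$, bound neighboring pairs with the $L^\infty$ estimate times Lemma~\ref{lem:local-L2-general} and $R|I|\asymp\rho_I^{-1}$, treat non-neighboring pairs with the bilinear identity (injectivity coming from strict convexity on the ordered region), and assemble via Proposition~\ref{prop:energy-sum-general}. The aside about $|\psi_R|^8\lesssim|\psi_R|^4$ is unnecessary, since $\int|\psi_R|^4|E_\Gamma f_I|^4\le\|E_\Gamma f_I\|_\infty^2\int|\psi_R|^4|E_\Gamma f_I|^2$ directly. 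The separate bounded-$R$ reduction is also not needed, because the argument is uniform in $R\ge1$; moreover, $R_{\mathrm{ft}}$ presupposes finite-type data that are not assumed here.
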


\begin{proof}
First suppose that $f$ is supported in one of the working charts, and let $\mathcal P_R$ be its terminal partition. Apply Proposition~\ref{prop:biortho-general} to
\[
H_I=\psi_RE_\Gamma f_I.
\]
Since $\widehat\psi_R$ is supported in a ball of radius $O(R^{-1})$,
\[
\operatorname{supp}\widehat H_I
\subset
\gamma(I)+B(0,A R^{-1})
\]
for a fixed factor $A=A(\psi)$. This is exactly the hypothesis of Proposition~\ref{prop:biortho-general}; Lemma~\ref{lem:terminal-rectangle-curve} also shows that this frequency neighborhood is contained in a fixed dilation of the terminal rectangle. Neighboring pairs are controlled by combining Lemma~\ref{lem:local-L2-general} with
\[
\|E_\Gamma f_J\|_\infty^2
\le \sigma(J)\|f_J\|_2^2
\lesssim_\Gamma |J|\|f_J\|_2^2
\]
and the identity $R|J|\asymp_{\Gamma,D}\rho_J^{-1}$; the $O_{\Gamma,D}(1)$ degree of the adjacency relation allows these terms to be summed. For nonneighboring pairs, order the two subarcs inside the chart and use the exact bilinear identity from Proposition~\ref{prop:bilinear-identity}; convexity makes the sum map injective on the ordered region, and \eqref{eq:angular-separation-general} converts the determinant into normal separation. Proposition~\ref{prop:energy-sum-general} assembles the two regimes and gives the estimate on one chart.

To pass to the whole curve, take a partition of unity $f=\sum_af_a$ subordinate to a finite chart cover with bounded overlap. Then
\[
|E_\Gamma f|^4\lesssim_\Gamma\sum_a|E_\Gamma f_a|^4
\]
and, by positivity of the energy kernel and finite overlap,
\[
\sum_a\mathcal E_R^\Gamma(f_a)\lesssim_\Gamma\mathcal E_R^\Gamma(f).
\]
Since the number of charts is structural, these two estimates assemble the global bound.
\end{proof}

\subsection{Local maximality of the canonical cutoff}

Let $I$ be a terminal cap of length $L$, angular width $\alpha_I$, and mass $\mu_I=\|f_I\|_2^2$. The terminal identity gives
\[
L\alpha_I\asymp_{\Gamma,D} R^{-1}.
\]
Within a local Riesz-type family, the size $\alpha_I$ is, up to structural constants, the largest cutoff compatible with the diagonal cost.

\begin{proposition}[Local maximality of the terminal cutoff]\label{prop:maximal-cutoff-general}
Let
\[
K_{\widehat\rho}(\omega,\omega')
=
\frac1{|\omega-\omega'|+\widehat\rho_I},
\qquad \widehat\rho_I>0,
\]
be a local kernel with constant cutoff on $\mathcal N(I)\times\mathcal N(I)$. Assume there exist constants $c,C>0$, independent of $R$, $I$, and $f_I$ but allowed to depend on $\Gamma$ and $D$, such that, for every terminal cap $I$ and every datum $f_I$ supported in $I$,
\[
\int_{B(0,cR)}|E_\Gamma f_I(x)|^4\,\mathrm dx
\le
C
\iint_{\mathcal N(I)^2}K_{\widehat\rho}(\omega,\omega')\,
\mathrm d\nu_I(\omega)\mathrm d\nu_I(\omega'),
\]
where $\nu_I=\mathcal N_\#(|f_I|^2\,\mathrm d\sigma)$. Then
\[
\boxed{
\widehat\rho_I\lesssim_{\Gamma,D} (RL)^{-1}\asymp_{\Gamma,D}\alpha_I.
}
\]
In particular, the canonical cutoff $\rho_I\asymp_{\Gamma,D}\alpha_I$ is maximal up to structural constants among these cutoffs. The upper bound determines only this maximal size.
\end{proposition}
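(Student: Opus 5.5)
We test the assumed inequality on one explicit datum that is coherent on $I$ and compare the two sides. Take $f_I=\mathbf 1_{\gamma(I)}$, possibly with a linear phase $e^{-2\pi i x_1\cdot\xi}$ to recenter; this changes neither $\nu_I$ nor the right-hand side, and translation invariance of $|E_\Gamma f_I|$ lets us work on $B(0,cR)$. Then $\mu_I=\|f_I\|_2^2=\sigma(I)\asymp_\Gamma L$.

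\emph{Upper bound for the right-hand side.} Since $K_{\widehat\rho}\le\widehat\rho_I^{-1}$, we get $\iint K_{\widehat\rho}\,\mathrm d\nu_I\,\mathrm d\nu_I\le\mu_I^2/\widehat\rho_I\asymp L^2/\widehat\rho_I$. This crude bound is enough, since we only seek an upper bound on $\widehat\rho_I$.

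\emph{Lower bound for the left-hand side.} This is the main step. Lemma~\ref{lem:product-linearization} and the product condition place $\gamma(I)$ inside the rectangle $\Pi_I$, of length $\asymp L$ in the direction $T(t_I)$ and thickness $O_{\Gamma,D}(R^{-1})$ in the direction $\mathcal N(t_I)$. For $x$ in the dual rectangle
\[
P=\{x:|x\cdot T(t_I)|\le c_1L^{-1},\ |x\cdot\mathcal N(t_I)|\le c_1R\},
\]
the phase $x\cdot(\xi-\gamma(t_I))$ is $O(c_1)$ for every $\xi\in\gamma(I)$. Choosing $c_1$ small, no cancellation occurs and $|E_\Gamma f_I(x)|\ge\tfrac12\sigma(I)\gtrsim L$ on $P$. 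Because $L\gtrsim_{\Gamma,D}R^{-1}$ (from $\alpha_I\lesssim1$), we have $L^{-1}\lesssim R$. After shrinking $c_1$ depending on $c$, $\Gamma$ and $D$, we get $P\subset B(0,cR)$, and $|P|\asymp R/L$. Hence
\[
\int_{B(0,cR)}|E_\Gamma f_I|^4\,\mathrm dx\gtrsim L^4\cdot\frac RL=RL^3 .
\]
The care needed here is that all constants depend only on $\Gamma$ and $D$, uniformly in $I$ and $R$. This holds because the rectangle inclusion comes from admissibility constants and $\mathrm d\sigma\asymp\mathrm dt$ on charts.

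\emph{Conclusion.} Combining the two bounds with the hypothesis gives $RL^3\lesssim CL^2/\widehat\rho_I$, so $\widehat\rho_I\lesssim_{\Gamma,D}(RL)^{-1}$. Finally, the terminal identity $L\alpha_I\asymp_{\Gamma,D}R^{-1}$ from Lemma~\ref{lem:terminal-partition-general} gives $(RL)^{-1}\asymp\alpha_I\asymp\rho_I$. This proves the boxed bound and the maximality of the canonical cutoff.
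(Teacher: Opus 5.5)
Your proposal is correct and follows essentially the same route as the paper. You test with the coherent datum $\mathbf 1_I$; the paper normalizes it to mass $\mu_I$, which is immaterial since both sides scale like $\mu_I^2$. You bound the energy crudely by $\mu_I^2/\widehat\rho_I$, and you bound the left side below by $RL\mu_I^2$ on the dual $L^{-1}\times R$ rectangle, which lies inside $B(0,cR)$ because $L^{-1}\lesssim_{\Gamma,D}R$. The recentering modulation you mention is unnecessary, since your rectangle $P$ is already centered at the origin.
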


\begin{proof}
The relevant diagonal contribution has size $RL\mu_I^2$, and coherent data attain this scale. Indeed, parametrizing $I$ by arc length, take
\[
f_I=(\mu_I/L)^{1/2}\mathbf 1_I.
\]
By Lemma~\ref{lem:product-linearization}, after factoring out the constant phase associated with a reference point of the cap, the remaining phase varies by a uniformly small amount on a fixed fraction of the dual rectangle of dimensions $L^{-1}\times R$. On that region,
\[
|E_\Gamma f_I|\gtrsim_{\Gamma,D} (L\mu_I)^{1/2}.
\]
Moreover, $L^{-1}\lesssim_{\Gamma,D} R$, because $L\alpha_I\asymp_{\Gamma,D} R^{-1}$ and the angular range of the chart is uniformly bounded. Choosing the constants of the dual rectangle sufficiently small, it lies inside $B(0,cR)$ and has area comparable to $R/L$. Therefore
\[
\int_{B(0,cR)}|E_\Gamma f_I|^4\,\mathrm dx
\gtrsim_{\Gamma,D}
RL\mu_I^2.
\]

On the other hand, for any datum of mass $\mu_I$,
\[
\iint_{\mathcal N(I)^2}K_{\widehat\rho}(\omega,\omega')\,
\mathrm d\nu_I(\omega)\mathrm d\nu_I(\omega')
\le
\frac{\mu_I^2}{\widehat\rho_I}.
\]
If this energy is to dominate the preceding coherent example uniformly, necessarily
\[
RL\mu_I^2
\lesssim_{\Gamma,D}
\frac{\mu_I^2}{\widehat\rho_I},
\]
and hence
\[
\widehat\rho_I\lesssim_{\Gamma,D}(RL)^{-1}\asymp_{\Gamma,D}\alpha_I.
\]

The maximality is one-sided: decreasing the competing cutoff increases the kernel pointwise. Therefore any upper bound valid with $\widehat\rho_I$ remains valid with a smaller cutoff. Unique selection of the kernel requires an additional principle.
\end{proof}

\subsection{Radius comparison and weights}

The doubling hypothesis already contains a quantitative form of lower growth.

\begin{proposition}[Radius comparison from doubling]\label{prop:radius-comparison-doubling}
Define
\[
\beta_D=\frac{Q_D}{Q_D+1},
\qquad Q_D=\log_2D.
\]
Then, for $R\ge1$, $\lambda\ge1$, and $\xi\in\Gamma$,
\[
\rho_{\lambda R}(\xi)
\gtrsim_{\Gamma,D}
\lambda^{-\beta_D}\rho_R(\xi),
\]
and
\[
\mathcal E_{\lambda R}^\Gamma(f)
\lesssim_{\Gamma,D}
\lambda^{\beta_D}\mathcal E_R^\Gamma(f).
\]
\end{proposition}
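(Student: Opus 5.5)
The plan is to compare the terminal equations at radii $R$ and $\lambda R$ using the lower growth of Lemma~\ref{lem:doubling-lower-growth}. The energy bound then follows pointwise on the kernel.

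First consider the large-$R$ range, $R\ge R_*$, where both terminal equations are exact. Write $r=r_R(\xi)$ and $r'=r_{\lambda R}(\xi)$. Since $R\mapsto r_R(\xi)$ is nonincreasing, $r'\le r$, so Lemma~\ref{lem:doubling-lower-growth} gives
\[
\mu_\kappa(B_\Gamma(\xi,r'))\ge D^{-1}(r'/r)^{Q_D}\mu_\kappa(B_\Gamma(\xi,r)).
\]
Multiplying by $r'$ and using the two terminal identities yields
\[
(\lambda R)^{-1}=r'\mu_\kappa(B_\Gamma(\xi,r'))\ge D^{-1}(r'/r)^{Q_D+1}R^{-1},
\]
that is,
\[
r'\le D^{1/(Q_D+1)}\lambda^{-1/(Q_D+1)}r.
\]
Hence
\[
\rho_{\lambda R}(\xi)=\frac{(\lambda R)^{-1}}{r'}\ge D^{-1/(Q_D+1)}\lambda^{-1+1/(Q_D+1)}\frac{R^{-1}}{r}=D^{-1/(Q_D+1)}\lambda^{-\beta_D}\rho_R(\xi).
\]

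Second, handle the small radii, where $R<R_*$ or $\lambda R<R_*$. With the convention $r_R=r_{R_*}$ for $R<R_*$, the following cases arise.
\begin{itemize}
\item If $\lambda R<R_*$, then $r'=r$ and the claim is trivial up to constants depending on $R_*$.
\item If $R<R_*\le\lambda R$, apply the large-$R$ argument between $R_*$ and $\lambda R$, with ratio $\lambda R/R_*\le\lambda$. Then compare $\rho_R$ with $\rho_{R_*}$: the relation $\rho_R=R^{-1}/r_{R_*}$ differs from $\rho_{R_*}$ by a factor $R_*/R\le R_*$, which is absorbed into $\Gamma$-constants.
\end{itemize}
The only care needed is to keep every ball in the structural range of doubling, which holds because all terminal radii are at most $r_*$.

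Third, the energy comparison. Let $\Delta_R(\xi,\eta)$ denote the kernel denominator in $\mathcal E_R^\Gamma$, i.e. $|\mathcal N(\xi)-\mathcal N(\eta)|+\max\{\rho_R(\xi),\rho_R(\eta)\}$. Applying the first bound to the maximum gives
\[
\max\{\rho_{\lambda R}(\xi),\rho_{\lambda R}(\eta)\}\gtrsim_{\Gamma,D}\lambda^{-\beta_D}\max\{\rho_R(\xi),\rho_R(\eta)\}.
\]
Since $\lambda^{-\beta_D}\le1$, the denominator at radius $\lambda R$ satisfies
\[
\Delta_{\lambda R}(\xi,\eta)\gtrsim_{\Gamma,D}\lambda^{-\beta_D}\Delta_R(\xi,\eta).
\]
Integrating this pointwise kernel bound against $|f(\xi)|^2|f(\eta)|^2\,\mathrm d\sigma\,\mathrm d\sigma$ gives the energy inequality.

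The main obstacle is only the bookkeeping in the bounded range $R<R_*$. The core computation is the one-line exponent balance $(r'/r)^{Q_D+1}\lesssim\lambda^{-1}$.
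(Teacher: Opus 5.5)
Your proposal is correct and follows the paper's proof essentially step for step. It has the same three ingredients: the exponent balance $(r'/r)^{Q_D+1}\le D\lambda^{-1}$ from Lemma~\ref{lem:doubling-lower-growth} in the exact range $R\ge R_*$; the same two-case reduction in the frozen range, where in the case $\lambda R<R_*$ one uses $\rho_{\lambda R}=\lambda^{-1}\rho_R$ together with $\lambda<R_*$; and the pointwise kernel comparison that yields the energy bound.
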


\begin{proof}
First suppose $R\ge R_*(\Gamma)$ and write
\[
\ell=r_R(\xi),
\qquad
\ell'=r_{\lambda R}(\xi).
\]
Monotonicity of $r\mapsto r\mu_\kappa(B_\Gamma(\xi,r))$ gives $\ell'\le\ell$, and by the exact definition of the terminal scale in this range,
\[
\frac{\ell'\mu_\kappa(B_\Gamma(\xi,\ell'))}
{\ell\mu_\kappa(B_\Gamma(\xi,\ell))}
=
\frac1\lambda.
\]
Lemma~\ref{lem:doubling-lower-growth} implies
\[
\frac1\lambda
\ge
D^{-1}\left(\frac{\ell'}\ell\right)^{Q_D+1}.
\]
Therefore
\[
\frac{\ell'}\ell
\lesssim_D
\lambda^{-1/(Q_D+1)}.
\]
Since $\rho_R=R^{-1}/r_R$,
\[
\rho_{\lambda R}(\xi)
\gtrsim_D
\lambda^{-Q_D/(Q_D+1)}\rho_R(\xi).
\]

For $1\le R<R_*$, use the convention $r_R=r_{R_*}$. If $\lambda R<R_*$, then $\rho_{\lambda R}=\lambda^{-1}\rho_R$, and the preceding comparison follows after adjusting a constant depending on $R_*$. If $\lambda R\ge R_*$, first compare $\lambda R$ with $R_*$ and then use
\[
\rho_{R_*}=\frac R{R_*}\rho_R.
\]
This proves the comparison uniformly for all $R\ge1$.

Finally, if
\[
a=|\mathcal N(\xi)-\mathcal N(\eta)|,
\qquad
b_R=\max\{\rho_R(\xi),\rho_R(\eta)\},
\]
then
\[
a+b_{\lambda R}
\gtrsim_{\Gamma,D}
\lambda^{-\beta_D}(a+b_R),
\]
and the energy comparison follows by integrating the corresponding kernels.
\end{proof}

\begin{corollary}[Weighted estimate under doubling]\label{cor:weighted-doubling}
If
\[
\iota>\beta_D=\frac{\log_2D}{1+\log_2D},
\]
then, for every $x_0\in\mathbb R^2$,
\[
\int_{\mathbb R^2}|E_\Gamma f(x)|^4
\left(1+\frac{|x-x_0|}{R}\right)^{-\iota}\,\mathrm dx
\lesssim_{\Gamma,D,\iota}
\mathcal E_R^\Gamma(f).
\]
\end{corollary}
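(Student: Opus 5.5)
The plan is to decompose $\mathbb R^2$ dyadically around $x_0$ and apply the ball estimate at each dyadic radius. Then Proposition~\ref{prop:radius-comparison-doubling} converts the energies at larger radii back to radius $R$.

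Write
\[
\mathbb R^2=B(x_0,R)\cup\bigcup_{j\ge1}\bigl(B(x_0,2^jR)\setminus B(x_0,2^{j-1}R)\bigr).
\]
On the $j$-th annulus the weight satisfies $(1+|x-x_0|/R)^{-\iota}\lesssim 2^{-j\iota}$, and on the ball it is at most $1$. This gives
\[
\int_{\mathbb R^2}|E_\Gamma f|^4\Bigl(1+\frac{|x-x_0|}{R}\Bigr)^{-\iota}\,\mathrm dx
\lesssim_\iota
\sum_{j\ge0}2^{-j\iota}\int_{B(x_0,2^jR)}|E_\Gamma f|^4\,\mathrm dx.
\]
Theorem~\ref{thm:direct-general-ball}, applied at radius $2^jR\ge1$ with the same center $x_0$, bounds each ball integral by $C_{\Gamma,D}\,\mathcal E_{2^jR}^\Gamma(f)$. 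The constant is uniform in $j$, since that theorem holds for every $R\ge1$ with constants depending only on $\Gamma$ and $D$.

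Next apply the energy comparison of Proposition~\ref{prop:radius-comparison-doubling} with $\lambda=2^j$:
\[
\mathcal E_{2^jR}^\Gamma(f)\lesssim_{\Gamma,D}2^{j\beta_D}\,\mathcal E_R^\Gamma(f).
\]
Substituting, the left-hand side is bounded by
\[
C_{\Gamma,D}\Bigl(\sum_{j\ge0}2^{-j(\iota-\beta_D)}\Bigr)\mathcal E_R^\Gamma(f).
\]
The geometric series converges exactly when $\iota>\beta_D$, and its sum depends only on $\iota-\beta_D$. This gives the constant $C_{\Gamma,D,\iota}$.

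The only step needing care is the uniformity of the two invoked results. The ball estimate must hold with a constant independent of the radius. The comparison constant in Proposition~\ref{prop:radius-comparison-doubling} must be independent of $\lambda$, including the bounded regime $R<R_*$, which that proposition already handles through the convention $r_R=r_{R_*}$. Given both, the argument is a summation with no real obstacle. If $\mathcal E_R^\Gamma(f)=\infty$ there is nothing to prove. Otherwise all terms are nonnegative, so the interchange of sum and integral is justified by monotone convergence.
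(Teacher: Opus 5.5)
Your proposal is correct and follows essentially the same route as the paper. Both arguments decompose into dyadic annuli around $x_0$, apply the ball estimate of Theorem~\ref{thm:direct-general-ball} at radius $2^jR$, convert back to radius $R$ via Proposition~\ref{prop:radius-comparison-doubling} with $\lambda=2^j$, and sum the convergent series $\sum_{j\ge0}2^{-j(\iota-\beta_D)}$.
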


\begin{proof}
Decompose $\mathbb R^2$ into annuli of radii $2^jR$. Theorem~\ref{thm:direct-general-ball} and Proposition~\ref{prop:radius-comparison-doubling} reduce the sum to
\[
\sum_{j\ge0}2^{-j\iota}2^{j\beta_D},
\]
which converges under the stated hypothesis.
\end{proof}

The exponent $\beta_D$ uses only the doubling constant. Quantitative finite-type data yield a uniform exponent from the order bound $m_*$. For a fixed finite-type curve, these data may be chosen with $m_*=m_{\max}(\Gamma)$, recovering the geometric exponent determined by the largest order actually present.

\begin{proposition}[Finite-type radius comparison]\label{prop:radius-comparison-general}
Assume that $\Gamma$ has quantitative finite-type data $\mathfrak T$, and let
\[
\beta_* = \frac{m_*+1}{m_*+2}.
\]
Then, for $R\ge1$, $\lambda\ge1$, and $\xi\in\Gamma$,
\[
\rho_{\lambda R}(\xi)
\gtrsim_{\Gamma,\mathfrak T}
\lambda^{-\beta_*}\rho_R(\xi),
\]
and
\[
\mathcal E_{\lambda R}^\Gamma(f)
\lesssim_{\Gamma,\mathfrak T}
\lambda^{\beta_*}\mathcal E_R^\Gamma(f).
\]
\end{proposition}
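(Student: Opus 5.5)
The plan follows the proof of Proposition~\ref{prop:radius-comparison-doubling} verbatim, replacing Lemma~\ref{lem:doubling-lower-growth} with the finite-type lower growth of Lemma~\ref{lem:finite-type-lower-growth}. That lemma gives exponent $m_*+1$ in place of $Q_D$.

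\textbf{Main range.} First treat the range $R\ge R_{\mathrm{ft}}(\Gamma,\mathfrak T)$. There the terminal identity is an exact equality, and by the convention fixed after Lemma~\ref{lem:finite-type-lower-growth} all terminal radii satisfy $r_R(\xi)\le\min\{r_{\mathrm{ft}},r_{\mathrm{lg}}\}$; the same holds for $\lambda R\ge R$. Write $\ell=r_R(\xi)$ and $\ell'=r_{\lambda R}(\xi)$. Monotonicity gives $\ell'\le\ell$, and
\[
\frac{\ell'\mu_\kappa(B_\Gamma(\xi,\ell'))}{\ell\,\mu_\kappa(B_\Gamma(\xi,\ell))}=\frac1\lambda .
\]
Since $\ell'\le\ell\le r_{\mathrm{lg}}$, \eqref{eq:lower-growth} bounds the left side below by a multiple of $(\ell'/\ell)^{m_*+2}$. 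Hence $\ell'/\ell\lesssim_{\mathfrak T}\lambda^{-1/(m_*+2)}$, and
\[
\frac{\rho_{\lambda R}(\xi)}{\rho_R(\xi)}=\frac{\ell}{\lambda\ell'}\gtrsim_{\mathfrak T}\lambda^{-1+1/(m_*+2)}=\lambda^{-\beta_*}.
\]
When $m_*=0$ the interpretation $r'/r$ gives exponent $2$ and $\beta_*=1/2$, which is consistent.

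\textbf{Bounded range.} Next handle $1\le R<R_{\mathrm{ft}}$. This is the only mildly delicate step, since the convention $r_R=r_{R_*}$ applies only below $R_*$. For $R_*\le R<R_{\mathrm{ft}}$ the identity is exact, but radii may exceed $r_{\mathrm{lg}}$. I would use compactness: on the compact range $[R,R_{\mathrm{ft}}]\subset[1,R_{\mathrm{ft}}]$, the terminal radii are bounded above and below by constants depending on $\Gamma,\mathfrak T$ (as in the proof of Lemma~\ref{lem:slow-variation}). So $\rho_R$ and $\rho_{R'}$ are comparable for all $R,R'$ in that range, with constants depending only on $\Gamma$ and $\mathfrak T$.

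There are two cases. If $\lambda R<R_{\mathrm{ft}}$, then $\lambda\le R_{\mathrm{ft}}$ is bounded, and the claim follows after adjusting constants. If $\lambda R\ge R_{\mathrm{ft}}$, write $\lambda R=\lambda' R_{\mathrm{ft}}$ with $\lambda'=\lambda R/R_{\mathrm{ft}}\le\lambda$. The main-range result gives
\[
\rho_{\lambda R}\gtrsim_{\mathfrak T}\lambda'^{-\beta_*}\rho_{R_{\mathrm{ft}}}\ge\lambda^{-\beta_*}\rho_{R_{\mathrm{ft}}},
\]
and the bounded-range comparability gives $\rho_{R_{\mathrm{ft}}}\asymp_{\Gamma,\mathfrak T}\rho_R$. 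These constants are exactly what the subscript $\Gamma,\mathfrak T$ permits.

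\textbf{Energy comparison.} Finally, as in Proposition~\ref{prop:radius-comparison-doubling}, set $a=|\mathcal N(\xi)-\mathcal N(\eta)|$ and $b_R=\max\{\rho_R(\xi),\rho_R(\eta)\}$. The pointwise bound gives $b_{\lambda R}\gtrsim\lambda^{-\beta_*}b_R$. Since $\lambda^{-\beta_*}\le1$,
\[
a+b_{\lambda R}\gtrsim\lambda^{-\beta_*}(a+b_R).
\]
Inverting this and integrating against $|f(\xi)|^2|f(\eta)|^2\,\mathrm d\sigma\,\mathrm d\sigma$ yields $\mathcal E^\Gamma_{\lambda R}(f)\lesssim\lambda^{\beta_*}\mathcal E^\Gamma_R(f)$.
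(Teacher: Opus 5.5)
Your proposal is correct and follows the paper's proof essentially step for step. The main range $R\ge R_{\mathrm{ft}}$ uses the exact terminal identity and Lemma~\ref{lem:finite-type-lower-growth} to get $(\ell'/\ell)^{m_*+2}\lesssim\lambda^{-1}$; the bounded range uses uniform comparability of $\rho_R$ on $[1,R_{\mathrm{ft}}]$, with the two subcases $\lambda R<R_{\mathrm{ft}}$ and $\lambda R\ge R_{\mathrm{ft}}$ (the latter by applying the main case with factor $\lambda R/R_{\mathrm{ft}}\le\lambda$); and the energy bound comes from the same pointwise kernel comparison.
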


\begin{proof}
Let $r_{\mathrm{lg}}$ be the radius from Lemma~\ref{lem:finite-type-lower-growth}, and let $R_{\mathrm{ft}}=R_{\mathrm{ft}}(\Gamma,\mathfrak T)$ be the single threshold fixed in Section~\ref{sec:terminal-geometry}, so that
\[
r_R(\xi)\le\min\{r_{\mathrm{ft}},r_{\mathrm{lg}}\}
\qquad
(R\ge R_{\mathrm{ft}},\ \xi\in\Gamma).
\]
First suppose $R\ge R_{\mathrm{ft}}$ and write
\[
\ell=r_R(\xi),
\qquad
\ell'=r_{\lambda R}(\xi).
\]
The exact definition of the terminal scale gives $\ell'\le\ell\le r_{\mathrm{lg}}$ and
\[
\frac{\ell'\mu_\kappa(B_\Gamma(\xi,\ell'))}
{\ell\mu_\kappa(B_\Gamma(\xi,\ell))}
=\frac1\lambda.
\]
Applying \eqref{eq:lower-growth},
\[
\lambda^{-1}
\gtrsim_{\mathfrak T}
\left(\frac{\ell'}\ell\right)^{m_*+2},
\]
so
\[
\frac{\ell'}\ell
\lesssim_{\mathfrak T}
\lambda^{-1/(m_*+2)}.
\]
Using $\rho_R=R^{-1}/\ell$ gives
\[
\rho_{\lambda R}(\xi)
\gtrsim_{\mathfrak T}
\lambda^{-(m_*+1)/(m_*+2)}\rho_R(\xi)
=
\lambda^{-\beta_*}\rho_R(\xi).
\]

It remains to treat the bounded range $1\le R<R_{\mathrm{ft}}$. In this range there are structural constants $0<c<C<\infty$ such that
\[
c\le \rho_R(\xi)\le C,
\qquad
\xi\in\Gamma,
\]
because $r_R(\xi)\le r_*$ and, for radii defined by the terminal equation, $r_R(\xi)\ge (R_{\mathrm{ft}}\mu_\kappa(\Gamma))^{-1}$; the range $R<R_*(\Gamma)$ is covered by the freezing convention. If $\lambda R<R_{\mathrm{ft}}$, this uniform comparability gives
\[
\rho_{\lambda R}(\xi)\gtrsim_{\Gamma,\mathfrak T}\rho_R(\xi)
\ge
\lambda^{-\beta_*}\rho_R(\xi).
\]
If $\lambda R\ge R_{\mathrm{ft}}$, apply the already proved case to the quotient $(\lambda R)/R_{\mathrm{ft}}$ and use the uniform comparability between $\rho_R$ and $\rho_{R_{\mathrm{ft}}}$; since $R\le R_{\mathrm{ft}}$, we again obtain
\[
\rho_{\lambda R}(\xi)
\gtrsim_{\Gamma,\mathfrak T}
\lambda^{-\beta_*}\rho_R(\xi).
\]
This proves the estimate for all $R\ge1$.

The energy comparison follows from the same kernel argument used at the end of the proof of Proposition~\ref{prop:radius-comparison-doubling}, replacing $\beta_D$ by $\beta_*$ and doubling dependencies by dependence on $\mathfrak T$.
\end{proof}

\begin{corollary}[Weighted estimate with finite-type exponent]\label{cor:weighted-general}
Under the quantitative finite-type data of Proposition~\ref{prop:radius-comparison-general}, if
\[
\iota>\beta_* = \frac{m_*+1}{m_*+2},
\]
then, for every $x_0\in\mathbb R^2$,
\[
\int_{\mathbb R^2}|E_\Gamma f(x)|^4
\left(1+\frac{|x-x_0|}{R}\right)^{-\iota}\,\mathrm dx
\lesssim_{\Gamma,\mathfrak T,\iota}
\mathcal E_R^\Gamma(f).
\]
\end{corollary}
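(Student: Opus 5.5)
The plan mirrors the proof of Corollary~\ref{cor:weighted-doubling}, with Proposition~\ref{prop:radius-comparison-general} replacing Proposition~\ref{prop:radius-comparison-doubling}.

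Fix $x_0$ and split $\mathbb R^2$ into the ball $B(x_0,R)$ and the dyadic annuli $A_j=\{2^{j-1}R\le|x-x_0|<2^jR\}$, $j\ge1$. On $A_j$ the weight satisfies $(1+|x-x_0|/R)^{-\iota}\lesssim 2^{-j\iota}$. Moreover $A_j\subset B(x_0,2^jR)$, and Theorem~\ref{thm:direct-general-ball} holds for every radius $\ge1$. Applying it at radius $2^jR$ gives
\[
\int_{A_j}|E_\Gamma f|^4\,\mathrm dx
\lesssim_{\Gamma,D}
\mathcal E^\Gamma_{2^jR}(f).
\]
The implicit constant depends on $D$. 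For a fixed finite-type curve, a doubling constant controlled by $\mathfrak T$ is supplied by Proposition~\ref{prop:finite-type-doubling} together with the bounded curvature away from the flat points. So this dependence is absorbed into $\Gamma,\mathfrak T$.

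Next apply Proposition~\ref{prop:radius-comparison-general} with $\lambda=2^j$:
\[
\mathcal E^\Gamma_{2^jR}(f)\lesssim_{\Gamma,\mathfrak T}2^{j\beta_*}\mathcal E^\Gamma_R(f).
\]
Summing over $j$ bounds the weighted integral by
\[
\mathcal E^\Gamma_R(f)\sum_{j\ge0}2^{-j(\iota-\beta_*)},
\]
and this geometric series converges exactly when $\iota>\beta_*$. Its value depends only on $\iota-\beta_*$, which gives the constant $C(\Gamma,\mathfrak T,\iota)$.

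The only delicate point is the first step: making sure the ball estimate's constant is uniform in the radius and depends only on the stated data. The energy's $R$-dependence sits entirely in $\rho_R$, which Proposition~\ref{prop:radius-comparison-general} controls uniformly for all $R\ge1$, including the frozen range $R<R_{\mathrm{ft}}$. Everything else is routine bookkeeping.
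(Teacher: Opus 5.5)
Your proposal is correct and is exactly the paper's argument: the annular decomposition from Corollary~\ref{cor:weighted-doubling}, with Proposition~\ref{prop:radius-comparison-general} supplying the factor $2^{j\beta_*}$, summed as a geometric series that converges for $\iota>\beta_*$. Your extra remark absorbing the doubling constant $D$ into $(\Gamma,\mathfrak T)$ is bookkeeping the paper leaves implicit, since there doubling is a standing assumption of the section.
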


\begin{proof}
Repeat the annular decomposition from Corollary~\ref{cor:weighted-doubling}, using Proposition~\ref{prop:radius-comparison-general}.
\end{proof}

\begin{proposition}[Subendpoint necessity of the weighted threshold]\label{prop:weighted-threshold-necessary}
Assume that, on one side of $t_0$, the curvature satisfies
\[
\kappa(t_0+u)\asymp_\Gamma u^m,
\qquad 0\le u\le u_0,
\qquad m\ge0,
\]
so that $m=0$ corresponds to the nondegenerate case and $m\ge1$ to a flat point of order $m$. Write
\[
\beta_m=\frac{m+1}{m+2}.
\]
Fix a structural radius $R_0\ge R_*(\Gamma)$. If $\iota<\beta_m$, there is no uniform constant $C$ such that
\[
\int_{\mathbb R^2}|E_\Gamma f(x)|^4
\left(1+\frac{|x|}{R_0}\right)^{-\iota}\,\mathrm dx
\le C\,\mathcal E_{R_0}^\Gamma(f)
\]
for every $f$ supported near $t_0$.

In particular, for a fixed finite-type curve define
\[
m_{\max}=m_{\max}(\Gamma),
\qquad
\beta_{\max}=\frac{m_{\max}+1}{m_{\max}+2}.
\]
If $m_{\max}>0$, apply the argument at a flat point where this order is attained; if $m_{\max}=0$, apply it at any nondegenerate point. In both cases, every uniform weighted inequality of this form for the fixed curve requires
\[
\iota\ge\beta_{\max}.
\]
\end{proposition}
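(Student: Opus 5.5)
We argue by contradiction with a family of coherent test data concentrated at the flat point, indexed by a large parameter $\lambda\ge1$. For each $\lambda$ let $I_\lambda$ be a terminal cap at scale $\lambda R_0$ issuing from $t_0$ on the side where $\kappa(t_0+u)\asymp u^m$. Thus $I_\lambda=[t_0,t_0+L_\lambda]$ with $L_\lambda\asymp r_{\lambda R_0}(t_0)\asymp(\lambda R_0)^{-1/(m+2)}$; this follows from solving the terminal equation with $\mu_\kappa([t_0,t_0+r])\asymp r^{m+1}$, exactly as in the remark after Proposition~\ref{prop:maximal-tangential-scale}. Set $f_\lambda=L_\lambda^{-1/2}\mathbf 1_{I_\lambda}$, so that $\|f_\lambda\|_2\asymp1$. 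We then compare the two sides of the weighted inequality as $\lambda\to\infty$ with $R_0$ fixed.

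\textbf{Lower bound for the left side.} By Lemma~\ref{lem:product-linearization}, $\gamma(I_\lambda)$ lies in a rectangle of length $O(L_\lambda)$ and normal thickness $O((\lambda R_0)^{-1})$. Hence, as in the proof of Proposition~\ref{prop:maximal-cutoff-general}, after removing a reference phase we get $|E_\Gamma f_\lambda|\gtrsim L_\lambda^{1/2}$ on a dual rectangle $P_\lambda$ centered at $0$, of dimensions $cL_\lambda^{-1}\times c\lambda R_0$. On $P_\lambda$ we have $|x|\lesssim\lambda R_0$ (since $L_\lambda^{-1}\lesssim\lambda R_0$), so the weight is $\gtrsim\lambda^{-\iota}$ there. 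This gives
\[
\int|E_\Gamma f_\lambda|^4\Bigl(1+\tfrac{|x|}{R_0}\Bigr)^{-\iota}\mathrm dx\gtrsim\lambda^{-\iota}L_\lambda^2\cdot\lambda R_0L_\lambda^{-1}=\lambda^{1-\iota}R_0L_\lambda\asymp_{R_0}\lambda^{1-\iota-\frac1{m+2}}=\lambda^{\beta_m-\iota}.
\]

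\textbf{Upper bound for the energy at $R_0$.} Since $R_0$ is fixed and $\sigma(I_\lambda)\to0$, the support of $f_\lambda$ eventually lies in the single $R_0$-terminal cap at $t_0$. On that cap $\rho_{R_0}\asymp\rho_{R_0}(t_0)$ by Lemma~\ref{lem:slow-variation}, so the kernel is $\lesssim\rho_{R_0}(t_0)^{-1}$, and
\[
\mathcal E^\Gamma_{R_0}(f_\lambda)\lesssim\rho_{R_0}(t_0)^{-1}\|f_\lambda\|_2^4\lesssim_{R_0}1.
\]
Comparing the two bounds, the ratio of the left side to the energy grows like $\lambda^{\beta_m-\iota}\to\infty$ when $\iota<\beta_m$, so no uniform constant $C$ can exist. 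The final assertion about $\beta_{\max}$ follows by applying this argument at a point of maximal order: the finite-type data give $\kappa\asymp d_\Gamma^{m_{\max}}$ there, and at a nondegenerate point one uses $m=0$.

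\textbf{Main obstacle.} The delicate step is the coherent lower bound on $P_\lambda$ when $I_\lambda$ is one-sided at the flat point. One must check that the phase $x\cdot(\gamma(t)-\gamma(t_0))$ varies by at most a small constant for $x\in P_\lambda$ and $t\in I_\lambda$. Write $x=aT(t_0)+b\mathcal N(t_0)$. The tangential contribution is $|a|L_\lambda\le c$. The normal contribution is $|b|\cdot O(L_\lambda\alpha(I_\lambda))\le c\lambda R_0\cdot O((\lambda R_0)^{-1})$, which is small by the product condition. Choosing $c$ small then keeps the real part of the integrand $\ge\tfrac12$ on $I_\lambda$, which yields the stated lower bound.
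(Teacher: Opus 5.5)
Your argument is correct and is essentially the paper's proof. Both use a normalized one-sided bump of width $\varepsilon=L_\lambda$ at $t_0$ and a coherent lower bound $|E_\Gamma f|\gtrsim\varepsilon^{1/2}$ on the dual $\varepsilon^{-1}\times\varepsilon^{-(m+2)}$ rectangle, giving weighted growth $\varepsilon^{(m+2)\iota-(m+1)}\asymp\lambda^{\beta_m-\iota}$, against an energy bounded by $1/\inf\rho_{R_0}\lesssim_{R_0}1$. Two cosmetic points: with only one-sided curvature control, define $L_\lambda=(\lambda R_0)^{-1/(m+2)}$ directly rather than identifying it with $r_{\lambda R_0}(t_0)$ (only $\mu_\kappa(I_\lambda)\lesssim L_\lambda^{m+1}$ is used); and your weight bound $\gtrsim\lambda^{-\iota}$ on the centered rectangle needs $\iota\ge0$, which the paper sidesteps by restricting to the part where $|x|\asymp\varepsilon^{-(m+2)}$ (alternatively, $\iota<0$ follows from monotonicity in $\iota$).
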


\begin{proof}
Parametrize one side of $t_0$ by arc length as $u\mapsto\gamma(t_0+u)$, $0\le u\le u_0$, and set $T_0=T(t_0)$, $\mathcal N_0=\mathcal N(t_0)$. If $\vartheta(u)$ is the angle between $T(t_0+u)$ and $T_0$, then, by the vanishing order,
\[
|\vartheta(u)|\lesssim_\Gamma u^{m+1}.
\]
Writing
\[
\gamma(t_0+u)-\gamma(t_0)=a(u)T_0+b(u)\mathcal N_0,
\]
the arc-length parametrization gives $|a(u)|\le u$, whereas
\[
|b(u)|\le\int_0^u|\sin\vartheta(v)|\,\mathrm dv
\lesssim_\Gamma u^{m+2}.
\]
Set $k=m+2$ and, for $0<\varepsilon\ll1$,
\[
f_\varepsilon(\gamma(t_0+u))
=\varepsilon^{-1/2}\mathbf 1_{[0,\varepsilon]}(u).
\]
Then $\|f_\varepsilon\|_2=1$.

Choose sufficiently small constants $c_1,c_2>0$ and define
\[
\mathcal R_\varepsilon
=
\left\{
 x=yT_0+z\mathcal N_0:
 |y|\le c_1\varepsilon^{-1},\quad
 c_2\varepsilon^{-k}\le z\le2c_2\varepsilon^{-k}
\right\}.
\]
For $x\in\mathcal R_\varepsilon$ and $0\le u\le\varepsilon$,
\[
\bigl|x\cdot(\gamma(t_0+u)-\gamma(t_0))\bigr|
\le c_1+C_\Gamma c_2.
\]
Taking $c_1,c_2$ sufficiently small, the residual phase remains in a fixed arc on which its real part is positive. After factoring out the constant phase $e^{2\pi i x\cdot\gamma(t_0)}$, one obtains
\[
|E_\Gamma f_\varepsilon(x)|\gtrsim_\Gamma\varepsilon^{1/2}
\qquad(x\in\mathcal R_\varepsilon).
\]
Moreover,
\[
|\mathcal R_\varepsilon|\asymp_\Gamma\varepsilon^{-(k+1)},
\qquad
|x|\asymp_\Gamma\varepsilon^{-k}
\quad(x\in\mathcal R_\varepsilon),
\]
so
\[
\int_{\mathbb R^2}|E_\Gamma f_\varepsilon(x)|^4
\left(1+\frac{|x|}{R_0}\right)^{-\iota}\,\mathrm dx
\gtrsim_{\Gamma,R_0,\iota}
\varepsilon^{k\iota-(k-1)}.
\]

On the other hand, at the fixed radius $R_0$ the terminal angular scale has a positive minimum. Indeed,
\[
\rho_{R_0}(t)=\frac{R_0^{-1}}{r_{R_0}(t)}
\ge\frac{R_0^{-1}}{r_*}.
\]
Since $\|f_\varepsilon\|_2=1$,
\[
\mathcal E_{R_0}^\Gamma(f_\varepsilon)
\le\frac1{\inf_t\rho_{R_0}(t)}
\lesssim_{\Gamma,R_0}1.
\]
If $\iota<(k-1)/k$, then $k\iota-(k-1)<0$, and the left-hand side diverges as $\varepsilon\downarrow0$, whereas the energy remains uniformly bounded. This rules out the uniform inequality.
\end{proof}

For a fixed finite-type curve, choose quantitative data with $m_*=m_{\max}(\Gamma)$. Then Corollary~\ref{cor:weighted-general} gives sufficiency for
\[
\iota>\beta_{\max}
=
\frac{m_{\max}(\Gamma)+1}{m_{\max}(\Gamma)+2},
\]
whereas Proposition~\ref{prop:weighted-threshold-necessary} gives failure for $\iota<\beta_{\max}$. Thus, for a fixed finite-type curve, the only undecided case is the endpoint $\iota=\beta_{\max}$.

For a family described by uniform data whose order bound is $m_*$, Corollary~\ref{cor:weighted-general} gives uniform sufficiency for
\[
\iota>\beta_* = \frac{m_*+1}{m_*+2}.
\]
This value is necessary for the class only if some curve in the family attains order $m_*$; a redundant bound $m_*>m_{\max}(\Gamma)$ for a particular curve creates no additional obstruction.

\section{The monomial model}
\label{sec:monomial}

We specialize the preceding theory to the model
\[
\phi_k(t)=t^k,
\qquad
\gamma_k(t)=(t,\phi_k(t)),
\qquad
0\le t\le1,
\qquad
k\ge3\ \text{real},
\]
and write $\Gamma_k=\gamma_k([0,1])$. Even when $k\notin\mathbb N$, the parametrization is $C^2$ on $[0,1]$, regular, convex, and contains no affine subarcs. Its turning measure is doubling, so Theorem~\ref{thm:direct-general-ball} applies throughout the full range of real $k\ge3$. A further advantage of the model is that both the terminal scale and the angular cutoff admit explicit formulas.

\subsection{Normals and transverse geometry}

The unit normal is
\begin{equation}\label{eq:normal-monomial}
\mathcal N_k(t)=\frac{(-kt^{k-1},1)}{\sqrt{1+k^2t^{2k-2}}}.
\end{equation}

The Gauss-map parametrization allows us to compare angular distance quantitatively with the variable $t^{k-1}$.

\begin{lemma}[Normal--parameter comparison]\label{lem:normal-param-monomial}
For $u,v\in[0,1]$,
\[
\frac{2k}{\pi(1+k^2)}|u^{k-1}-v^{k-1}|
\le
|\mathcal N_k(u)-\mathcal N_k(v)|
\le
k|u^{k-1}-v^{k-1}|.
\]
\end{lemma}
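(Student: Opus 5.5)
The plan is to reduce both inequalities to one-variable estimates for the tangent angle. Set $s(t)=k\,t^{k-1}$; for $t\in[0,1]$ this lies in $[0,k]$. By \eqref{eq:normal-monomial},
\[
\mathcal N_k(t)=\frac{(-s(t),1)}{\sqrt{1+s(t)^2}}=\bigl(\cos\theta(t),\sin\theta(t)\bigr),
\qquad
\theta(t)=\frac\pi2+\arctan s(t).
\]
So $\mathcal N_k(t)$ is the unit vector at angle $\theta(t)$, and $\theta$ takes values in $[\pi/2,\pi/2+\arctan k]$. Writing $x=|\theta(u)-\theta(v)|=|\arctan s(u)-\arctan s(v)|$, the chord formula already used in Remark~\ref{rem:det-normal-distance} gives
\[
|\mathcal N_k(u)-\mathcal N_k(v)|=2\sin(x/2).
\]
Moreover $0\le x\le\arctan k<\pi/2$. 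Both bounds then follow by estimating $2\sin(x/2)$ against $x$, and $x$ against $|s(u)-s(v)|=k|u^{k-1}-v^{k-1}|$.

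For the upper bound, I will use $2\sin(x/2)\le x$ and the fact that $\arctan$ is $1$-Lipschitz. Together these give
\[
|\mathcal N_k(u)-\mathcal N_k(v)|\le|s(u)-s(v)|=k|u^{k-1}-v^{k-1}|.
\]

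For the lower bound, I will use concavity of $\sin$ on $[0,\pi/2]$, which gives $\sin y\ge 2y/\pi$ there. Since $x/2\le\pi/4$, applying this with $y=x/2$ yields $2\sin(x/2)\ge 2x/\pi$. Next, by the mean value theorem, $\arctan'(s)=(1+s^2)^{-1}\ge(1+k^2)^{-1}$ on $[0,k]$. Hence
\[
x\ge\frac{|s(u)-s(v)|}{1+k^2}=\frac{k|u^{k-1}-v^{k-1}|}{1+k^2}.
\]
Combining the two inequalities gives exactly the constant $2k/(\pi(1+k^2))$ claimed.

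There is no real obstacle here. The only point needing care is the angular range: we must check that $x\le\pi/2$, so that the concavity bound applies to $x/2$ and no wraparound occurs on the circle. This holds because $s\ge0$ forces both angles into an arc of length $\arctan k<\pi/2$.
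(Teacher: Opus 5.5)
Your proposal is correct and is essentially the paper's proof. You write the normal as a unit vector at angle $\arctan(kt^{k-1})$, up to a fixed rotation, and then apply the mean value theorem for $\arctan$ on $[0,k]$ together with $\frac{2}{\pi}x\le 2\sin(x/2)\le x$ on the range $x\le\arctan k<\pi/2$; the paper combines exactly these two comparisons and obtains the same constants.
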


\begin{proof}
Write
\[
\mathcal N_k(t)=(-\sin\vartheta(t),\cos\vartheta(t)),
\qquad
\vartheta(t)=\arctan(kt^{k-1}).
\]
If $a,b\in[0,k]$, the mean value theorem gives
\[
\frac{|a-b|}{1+k^2}
\le
|\arctan a-\arctan b|
\le
|a-b|.
\]
Moreover, for $|\vartheta_1-\vartheta_2|\le\pi/2$,
\[
\frac2\pi|\vartheta_1-\vartheta_2|
\le
|e^{i\vartheta_1}-e^{i\vartheta_2}|
\le
|\vartheta_1-\vartheta_2|.
\]
Since $\vartheta([0,1])\subset[0,\arctan k]\subset[0,\pi/2)$, it is enough to combine both estimates with
\[
a=ku^{k-1},
\qquad
b=kv^{k-1}.
\]
\end{proof}

For the transverse defect, set
\[
q_{\phi_k}(\bar t,\tilde t)
=(\bar t+\tilde t)^k+(\bar t-\tilde t)^k-2\bar t^k.
\]

\begin{lemma}[Monomial transverse geometry]\label{lem:q-monomial}
If $0\le \tilde t\le \bar t\le1$, then
\[
q_{\phi_k}(\bar t,\tilde t)\asymp_k \bar t^{k-2}\tilde t^2,
\qquad
\partial_{\tilde t}q_{\phi_k}(\bar t,\tilde t)\asymp_k \bar t^{k-2}\tilde t,
\]
and therefore
\[
q_{\phi_k}(\bar t,\tilde t)
\asymp_k
\tilde t\,\partial_{\tilde t}q_{\phi_k}(\bar t,\tilde t).
\]
\end{lemma}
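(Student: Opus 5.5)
Following the proof of Lemma~\ref{lem:q-nondegenerate}, I will first bound the derivative and then integrate. Since $\phi_k''(u)=k(k-1)u^{k-2}$,
\[
\partial_{\tilde t}q_{\phi_k}(\bar t,\tilde t)
=
\int_{\bar t-\tilde t}^{\bar t+\tilde t}k(k-1)u^{k-2}\,\mathrm du .
\]
The hypothesis $0\le\tilde t\le\bar t$ puts the interval of integration inside $[0,2\bar t]$, so the upper bound is immediate:
\[
u^{k-2}\le(2\bar t)^{k-2},
\qquad\text{hence}\qquad
\partial_{\tilde t}q_{\phi_k}\lesssim_k\bar t^{k-2}\tilde t .
\]

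For the lower bound I use the right half $[\bar t,\bar t+\tilde t]$ of the interval. There $u\ge\bar t$, so that half alone contributes at least $k(k-1)\bar t^{k-2}\tilde t$. This argument works uniformly, including the endpoint case $\tilde t=\bar t$, where the left endpoint is $0$. It needs $k\ge2$, which holds since $k\ge3$. One caveat is that $\bar t+\tilde t$ may exceed $1$. This does not matter, because the formula for $\phi_k$ extends to $[0,2]$ and the definition of $q$ is purely algebraic; the paper's conventions in any case ask that $\bar t\pm\tilde t$ lie in the domain.

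Integrating in $\tilde t$ from $0$, and using $q_{\phi_k}(\bar t,0)=0$, gives the bound for $q$:
\[
q_{\phi_k}(\bar t,\tilde t)=\int_0^{\tilde t}\partial_{\tilde t}q_{\phi_k}(\bar t,s)\,\mathrm ds\asymp_k\bar t^{k-2}\int_0^{\tilde t}s\,\mathrm ds\asymp_k\bar t^{k-2}\tilde t^{2}.
\]
This step uses that the derivative estimate holds for every $s\in[0,\tilde t]$, which is true because $s\le\tilde t\le\bar t$ keeps the hypothesis in force along the whole path of integration. Dividing the two comparabilities then gives the final assertion $q\asymp_k\tilde t\,\partial_{\tilde t}q$.

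There is no substantial obstacle. The only point requiring care is the lower bound when $\tilde t$ is close to $\bar t$, and taking the right half-interval handles it uniformly without any case split. As a consistency check, the final relation also follows from Lemma~\ref{lem:q-separation-doubling}, since $\mu_\kappa$ is doubling for this model. The direct computation is preferable, however, because it gives explicit $k$-dependence.
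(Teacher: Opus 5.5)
Your proof is correct and is essentially the paper's argument. The paper bounds $q_{\phi_k}$ directly through the Taylor remainder $\int_{-\tilde t}^{\tilde t}(\tilde t-|u|)\,k(k-1)(\bar t+u)^{k-2}\,\mathrm du$ and bounds $\partial_{\tilde t}q_{\phi_k}$ by the mean value theorem; integrating your formula for $\partial_{\tilde t}q_{\phi_k}$ in $\tilde t$ reproduces exactly that Taylor kernel by Fubini, and your right-half-interval argument makes the derivative lower bound fully explicit where the paper only cites the mean value theorem.
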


\begin{proof}
For $\tilde t=0$ the assertions are immediate. If $\tilde t>0$, Taylor's formula with integral remainder gives
\[
q_{\phi_k}(\bar t,\tilde t)
=
\int_{-\tilde t}^{\tilde t}
(\tilde t-|u|)k(k-1)(\bar t+u)^{k-2}\,\mathrm du.
\]
Since $\tilde t\le\bar t$, on a fixed fraction of the integration interval one has $\bar t+u\asymp\bar t$, which gives the lower bound; the upper bound follows from $\bar t+u\le2\bar t$. Moreover,
\[
\partial_{\tilde t}q_{\phi_k}(\bar t,\tilde t)
=
k\bigl((\bar t+\tilde t)^{k-1}-(\bar t-\tilde t)^{k-1}\bigr),
\]
and the mean value theorem gives the second comparability. The last follows by combining the first two.
\end{proof}

\subsection{Terminal scales and canonical partition}

Define
\begin{equation}\label{eq:ell-monomial}
\ell_R(t)
=
R^{-1/2}(t+R^{-1/k})^{-(k-2)/2},
\end{equation}
\begin{equation}\label{eq:tau-monomial}
\tau_R(t)
=
R^{-1/2}(t+R^{-1/k})^{(k-2)/2}.
\end{equation}
Then
\begin{equation}\label{eq:ell-tau-product}
\ell_R(t)\tau_R(t)=R^{-1}.
\end{equation}
In particular,
\[
\ell_R(t)\asymp_k
\begin{cases}
R^{-1/k},&t\lesssim R^{-1/k},\\
R^{-1/2}t^{-(k-2)/2},&t\gtrsim R^{-1/k},
\end{cases}
\]
and
\[
\tau_R(t)\asymp_k
\begin{cases}
R^{-(k-1)/k},&t\lesssim R^{-1/k},\\
R^{-1/2}t^{(k-2)/2},&t\gtrsim R^{-1/k}.
\end{cases}
\]

Up to constants depending on $k$, these are exactly the intrinsic scales from Section~\ref{sec:terminal-geometry} specialized to the monomial model. Indeed, the tangent angle is $\vartheta(t)=\arctan(kt^{k-1})$, so
\[
\mathrm d\mu_\kappa(t)
=
\vartheta'(t)\,\mathrm dt
=
\frac{k(k-1)t^{k-2}}{1+k^2t^{2k-2}}\,\mathrm dt
\asymp_k
t^{k-2}\,\mathrm dt.
\]
Since $|\gamma_k'(t)|\asymp_k1$, for radii in the structural range, with the natural endpoint truncation,
\[
\mu_\kappa(B_{\Gamma_k}(\gamma_k(t),r))
\asymp_k
r(t+r)^{k-2}.
\]
In particular, $\mu_\kappa$ is doubling with constant depending only on $k$. If $a=R^{-1/k}$, then
\[
\frac{\ell_R(t)}{t+a}
=
\left(\frac{a}{t+a}\right)^{k/2}
\le1,
\]
and moreover $t+\ell_R(t)\asymp_k t+a$. Consequently,
\[
\ell_R(t)^2(t+\ell_R(t))^{k-2}\asymp_k R^{-1}.
\]
Since $r\,\mu_\kappa(B_{\Gamma_k}(\gamma_k(t),r))\asymp_k r^2(t+r)^{k-2}$, substituting $r=\ell_R(t)$ yields a quantity comparable to $R^{-1}$. Multiplying $\ell_R(t)$ by a fixed small constant reduces this product by a uniform quadratic factor, while multiplying it by a fixed large constant increases it by a uniform quadratic factor. The intrinsic definition and monotonicity of $r\mapsto r\mu_\kappa(B_{\Gamma_k}(\gamma_k(t),r))$ then give, including the bounded range of $R$ through the convention from Section~\ref{sec:terminal-geometry},
\begin{equation}\label{eq:monomial-intrinsic-scales}
r_R(\gamma_k(t))\asymp_k\ell_R(t),
\qquad
\rho_R(\gamma_k(t))\asymp_k\tau_R(t).
\end{equation}
For the integer exponents considered by Schippa, $\ell_R$ agrees in order with the maximal linearization scale of his canonical monomial covering~\cite{Schippa2024}; here the same comparison follows directly for every real $k\ge3$.

Discretize the tangential scale by
\[
\Xi(t)
=
\int_0^t\frac{\mathrm du}{\ell_R(u)}
=
\frac2k\left[\left(1+tR^{1/k}\right)^{k/2}-1\right].
\]
Since $\Xi(1)>1$, write
\[
\Xi(1)=n+\varepsilon,
\qquad
n\in\mathbb N,
\qquad
0\le\varepsilon<1,
\]
and define $t_j$ by $\Xi(t_j)=j$ for $0\le j\le n$. If $\varepsilon=0$, take
\[
\theta_j=[t_j,t_{j+1}],
\qquad
0\le j\le n-1.
\]
If $0<\varepsilon<1$, take these intervals for $0\le j\le n-2$ and merge the final remainder with the preceding cap:
\[
\theta_{n-1}=[t_{n-1},1].
\]
Thus, for every canonical cap $\theta=[a,b]$,
\begin{equation}\label{eq:Xi-cap-length}
1\le \Xi(b)-\Xi(a)\le2.
\end{equation}
In particular, no final cap is arbitrarily smaller than the local terminal scale.

Let $\Gamma_\theta=\gamma_k(\theta)$ denote the corresponding subarc. We identify a function on $\Gamma_k$ with its pullback by $\gamma_k$ and define explicitly
\[
f_\theta:=f\,\mathbf 1_\theta,
\]
equivalently, $f\,\mathbf 1_{\Gamma_\theta}$ on the curve. In this section $|\theta|$ denotes parameter length; since $|\gamma_k'|\asymp_k1$,
\[
\sigma(\Gamma_\theta)\asymp_k|\theta|.
\]

\begin{lemma}[Cap size and angular diameter]\label{lem:cap-gauss-diameter}
For a canonical cap $\theta$ and any representative $t_\theta\in\theta$,
\[
|\theta|\asymp_k\ell_R(t_\theta),
\qquad
\operatorname{diam}\mathcal N_k(\theta)\asymp_k\tau_R(t_\theta).
\]
Moreover, consecutive caps have comparable lengths and angular widths.
\end{lemma}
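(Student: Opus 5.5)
The plan is to work directly with the explicit function $\Xi$ and the weight $1/\ell_R$, using only elementary properties of $\ell_R$ and $\tau_R$ together with Lemma~\ref{lem:normal-param-monomial}.

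\emph{Step 1: slow variation of $\ell_R$.} Write $a=R^{-1/k}$. We show that $\ell_R$ is comparable at any two points $s,t$ with $|s-t|\le C\ell_R(t)$. Since $\ell_R(t)\le t+a$, such points satisfy $s+a\asymp_{k,C}t+a$, and $\ell_R$ is a fixed power of $t+a$. The same holds for $\tau_R$.

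\emph{Step 2: cap length.} Let $\theta=[\alpha,\beta]$ be a canonical cap. By \eqref{eq:Xi-cap-length},
\[
\int_\alpha^\beta\frac{\mathrm du}{\ell_R(u)}\in[1,2].
\]
The main obstacle is that slow variation in Step 1 is only available on windows of size $O(\ell_R)$, and we do not yet know that $|\theta|$ is that small. To get around this, we use monotonicity: $\ell_R$ is nonincreasing, so $1/\ell_R$ is nondecreasing.

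For the upper bound, this gives
\[
|\theta|\le\ell_R(\alpha)\int_\alpha^\beta\frac{\mathrm du}{\ell_R(u)}\le2\ell_R(\alpha).
\]
Step 1 then gives $\ell_R(u)\asymp_k\ell_R(\alpha)$ for all $u\in\theta$. Returning to the integral, $1\le|\theta|/\min_\theta\ell_R$ yields $|\theta|\gtrsim_k\ell_R(\alpha)$. Hence $|\theta|\asymp_k\ell_R(t_\theta)$ for every $t_\theta\in\theta$.

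\emph{Step 3: angular diameter.} By Lemma~\ref{lem:normal-param-monomial} and monotonicity of $\mathcal N_k$,
\[
\operatorname{diam}\mathcal N_k(\theta)\asymp_k\beta^{k-1}-\alpha^{k-1}=(k-1)\,|\theta|\,\xi^{k-2}
\]
for some $\xi\in\theta$. We split into two cases.

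If $\alpha\ge a$, then $\xi\asymp t+a$ on $\theta$, since $|\theta|\lesssim t+a$. This gives
\[
\operatorname{diam}\mathcal N_k(\theta)\asymp_k\ell_R\,(t+a)^{k-2}=R^{-1/2}(t+a)^{(k-2)/2}=\tau_R.
\]

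If $\alpha<a$, then $\ell_R\asymp a$ and $\theta\subset[0,Ca]$. The lower bound comes from
\[
\beta^{k-1}-\alpha^{k-1}\ge|\theta|^{k-1}\gtrsim a^{k-1}\asymp\tau_R,
\]
using superadditivity of $x^{k-1}$. The upper bound comes from $\beta^{k-1}\lesssim a^{k-1}$.

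\emph{Step 4: consecutive caps.} Consecutive caps share an endpoint. Applying Step 2 and Step 3 at that common endpoint, with it taken as the representative of each cap, shows that consecutive caps have comparable lengths and comparable angular widths.
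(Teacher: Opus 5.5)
Your proof is correct. For the angular diameter you follow the paper's route. You split into the cases $\alpha\ge R^{-1/k}$ and $\alpha<R^{-1/k}$, use the mean value theorem in the first, and use a flat-region estimate in the second. Your superadditivity bound $\beta^{k-1}-\alpha^{k-1}\ge|\theta|^{k-1}$ makes explicit a step the paper compresses into the rescaling $t=t_{\mathrm{flat}}y$ with $y_b-y_a\gtrsim_k1$.

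The real difference is in the length estimate. The paper inverts $\Xi$ explicitly, writing $\ell_R$ and $\tau_R$ as powers of $1+\tfrac k2\Xi(t)$. Bounded $\Xi$-length of a cap then gives bounded variation of both scales on it at once. The same observation handles consecutive caps, on a window of $\Xi$-length at most $4$. You use only that $\ell_R$ is nonincreasing. This gives the a priori bound $|\theta|\le2\ell_R(\alpha)$, which places the whole cap in a window where rightward slow variation of $\ell_R$ applies. Your route avoids the inversion formula and would work for any nonincreasing scale function that is slowly varying on windows of its own size. The paper's computation is tied to this model but settles cap lengths and consecutive-cap comparability in one stroke. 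Your treatment of consecutive caps, taking the shared endpoint as a common representative, is cleaner than the paper's.

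One small imprecision: Step~1 asserts two-sided slow variation. The lower bound $s+a\gtrsim t+a$ for $s<t$ does not follow from $\ell_R(t)\le t+a$ alone. It needs $\ell_R(t)/(t+a)=(a/(t+a))^{k/2}$ together with $s\ge0$. You only ever apply Step~1 with $s\ge\alpha$, where the upper bound suffices, so the argument is unaffected.
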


\begin{proof}
Explicit inversion of the adapted coordinate gives
\[
1+tR^{1/k}
=
\left(1+\frac{k}{2}\Xi(t)\right)^{2/k}.
\]
Therefore
\[
\ell_R(t)
=
R^{-1/k}
\left(1+\frac{k}{2}\Xi(t)\right)^{-(k-2)/k},
\]
and
\[
\tau_R(t)
=
R^{-(k-1)/k}
\left(1+\frac{k}{2}\Xi(t)\right)^{(k-2)/k}.
\]
On an interval of bounded length in the $\Xi$ variable, both functions vary only by a factor depending on $k$. Using \eqref{eq:Xi-cap-length} and $\mathrm dt=\ell_R(t)\,\mathrm d\Xi$, we obtain
\[
|\theta|
=
\int_{\Xi(a)}^{\Xi(b)}\ell_R(t(\xi))\,\mathrm d\xi
\asymp_k
\ell_R(t_\theta).
\]

By Lemma~\ref{lem:normal-param-monomial},
\[
|\mathcal N_k(u)-\mathcal N_k(v)|
\asymp_k
|u^{k-1}-v^{k-1}|.
\]
Let $t_{\mathrm{flat}}=R^{-1/k}$ and write $\theta=[a,b]$. If $a\ge t_{\mathrm{flat}}$, then $|\theta|\asymp_k\ell_R(a)\lesssim_k a$, so all points of $\theta$ are comparable to $t_\theta$. The mean value theorem gives
\[
\operatorname{diam}\mathcal N_k(\theta)
\asymp_k
t_\theta^{k-2}|\theta|
\asymp_k
R^{-1/2}t_\theta^{(k-2)/2}
\asymp_k
\tau_R(t_\theta).
\]
If $a<t_{\mathrm{flat}}$, then $\Xi(a)\le\Xi(t_{\mathrm{flat}})=O_k(1)$ and \eqref{eq:Xi-cap-length} implies $\Xi(b)=O_k(1)$. After rescaling $t=t_{\mathrm{flat}}y$, the explicit formula for $\Xi$ shows that $0\le y_a<1$, $y_b=O_k(1)$, and, since $\Xi(b)-\Xi(a)\ge1$, also $y_b-y_a\gtrsim_k1$. Therefore
\[
b^{k-1}-a^{k-1}\asymp_k t_{\mathrm{flat}}^{k-1},
\]
and hence
\[
\operatorname{diam}\mathcal N_k(\theta)
\asymp_k
t_{\mathrm{flat}}^{k-1}
=
R^{-(k-1)/k}
\asymp_k
\tau_R(t_\theta).
\]
Finally, two consecutive caps form an interval of length at most $4$ in the $\Xi$ coordinate; the preceding explicit formulas then give the stated comparability.
\end{proof}

Number the caps in order and fix $n_{\mathrm{nb}}=n_{\mathrm{nb}}(k)$ sufficiently large. Write $\theta\sim\theta'$ if their indices differ by at most $n_{\mathrm{nb}}$. By monotonicity of $\mathcal N_k$ and comparability of consecutive caps, we can choose $n_{\mathrm{nb}}$ so that, if $\theta\not\sim\theta'$,
\[
\operatorname{dist}(\mathcal N_k(\theta),\mathcal N_k(\theta'))
\gtrsim_k
\tau_\theta+\tau_{\theta'},
\qquad
\tau_\theta\asymp_k\tau_R(t_\theta).
\]
The resulting partition is an admissible terminal partition in the sense of Definition~\ref{def:terminal-admissible}. The first three properties follow from \eqref{eq:ell-tau-product}, Lemma~\ref{lem:cap-gauss-diameter}, and the preceding separation. For stability under enlargements, if $B\ge1$ is fixed and $u$ lies at distance $O_B(|\theta|)$ from $\theta$, then
\[
u+R^{-1/k}\asymp_{k,B}t_\theta+R^{-1/k}.
\]
Indeed, if the displacement is at most $(t_\theta+R^{-1/k})/2$, the assertion is immediate; in the complementary case, the formula for $\ell_R$ and $|\theta|\asymp_k\ell_R(t_\theta)$ force $t_\theta\lesssim_{k,B}R^{-1/k}$. The explicit formulas for $\ell_R$ and $\tau_R$ then give bounded variation on the enlargement, which by \eqref{eq:Xi-cap-length} meets only $O_{k,B}(1)$ comparable caps.

\subsection{Adapted energy and direct estimate}

Riesz energies and their truncations are standard objects in potential theory and geometric measure theory; see, for instance, Mattila~\cite{Mattila1995}. In the monomial model define
\begin{equation}\label{eq:energy-adapted-monomial}
\mathcal E_{R,k}(f)
=
\iint_{[0,1]^2}
\frac{|f(u)|^2|f(v)|^2}
{|\mathcal N_k(u)-\mathcal N_k(v)|+\tau_R((u+v)/2)}
\,\mathrm d\sigma(u)\mathrm d\sigma(v).
\end{equation}

\begin{lemma}[Midpoint comparison]\label{lem:tau-midpoint-max}
For $u,v\in[0,1]$,
\[
\tau_R\!\left(\frac{u+v}{2}\right)
\asymp_k
\max\{\tau_R(u),\tau_R(v)\}.
\]
\end{lemma}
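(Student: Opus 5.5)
The plan is to reduce everything to the elementary comparison
\[
\frac{u+v}{2}+a\asymp \max\{u,v\}+a,\qquad a=R^{-1/k},
\]
valid for $u,v\in[0,1]$, and then use the explicit formula \eqref{eq:tau-monomial}. Write
\[
\tau_R(t)=R^{-1/2}(t+a)^{(k-2)/2}.
\]
Since $k\ge3$, the exponent $(k-2)/2$ is positive, so $\tau_R$ is nondecreasing on $[0,1]$. Hence
\[
\max\{\tau_R(u),\tau_R(v)\}=\tau_R(\max\{u,v\}).
\]
The statement therefore amounts to comparing $\tau_R$ at the midpoint with $\tau_R$ at the larger endpoint.

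For the upper bound, note that $(u+v)/2\le\max\{u,v\}$. Monotonicity then gives
\[
\tau_R\!\left(\tfrac{u+v}{2}\right)\le\max\{\tau_R(u),\tau_R(v)\}
\]
with constant $1$.

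For the lower bound, use $u,v\ge0$, which gives $(u+v)/2\ge\tfrac12\max\{u,v\}$. Consequently
\[
\tfrac{u+v}{2}+a\ge\tfrac12\bigl(\max\{u,v\}+a\bigr).
\]
Raising both sides to the power $(k-2)/2$ yields
\[
\tau_R\!\left(\tfrac{u+v}{2}\right)\ge 2^{-(k-2)/2}\,\tau_R(\max\{u,v\}).
\]
This gives the comparability with constants depending only on $k$, uniformly in $R\ge1$.

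There is no real obstacle. The only point to watch is that nonnegativity of the parameters is essential: the midpoint could be much smaller than the maximum if negative values were allowed, but on $[0,1]$ this cannot happen. The additive shift $a$ only helps. I would also remark that the statement is exact for the explicit $\tau_R$. Combined with \eqref{eq:monomial-intrinsic-scales}, it identifies the midpoint cutoff in $\mathcal E_{R,k}$ with $\max\{\rho_R(\xi),\rho_R(\eta)\}$, which is how it will be used to match $\mathcal E_{R,k}$ with $\mathcal E_R^{\Gamma_k}$.
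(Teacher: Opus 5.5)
Your proposal is correct and matches the paper's proof. The paper likewise assumes $u\le v$, sandwiches $\frac{u+v}{2}+R^{-1/k}$ between $\frac12\bigl(v+R^{-1/k}\bigr)$ and $v+R^{-1/k}$, and raises to the positive power $(k-2)/2$, which gives constants $2^{-(k-2)/2}$ and $1$ exactly as in your argument.
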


\begin{proof}
Assume $u\le v$. Then
\[
\frac12\bigl(v+R^{-1/k}\bigr)
\le
\frac{u+v}{2}+R^{-1/k}
\le
v+R^{-1/k}.
\]
Raising to the power $(k-2)/2$ and multiplying by $R^{-1/2}$ gives the comparability.
\end{proof}

By \eqref{eq:monomial-intrinsic-scales} and the preceding lemma, the general energy from \eqref{eq:energy-general} satisfies
\[
\mathcal E_R^{\Gamma_k}(f)
\asymp_k
\mathcal E_{R,k}(f).
\]
This identification gives an intrinsic explanation for the midpoint-dependent cutoff in \eqref{eq:energy-adapted-monomial}.

\begin{proposition}[Energy--sum equivalence]\label{prop:energy-sum-monomial}
For the canonical partition above,
\begin{align}\label{eq:energy-sum-monomial}
\mathcal E_{R,k}(f)
\asymp_k{}&
\sum_\theta\frac{\|f_\theta\|_2^4}{\tau_\theta}\\
&+
\sum_{\theta\not\sim\theta'}
\iint_{\theta\times\theta'}
\frac{|f(u)|^2|f(v)|^2}{|\mathcal N_k(u)-\mathcal N_k(v)|}
\,\mathrm d\sigma(u)\mathrm d\sigma(v).
\nonumber
\end{align}
\end{proposition}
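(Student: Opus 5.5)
The plan is to reduce the proposition to the general energy--sum equivalence, Proposition~\ref{prop:energy-sum-general}, or equivalently to repeat its two-regime argument with the explicit monomial quantities. First, the kernel of $\mathcal E_{R,k}$ is pointwise comparable to the general kernel. By Lemma~\ref{lem:tau-midpoint-max}, $\tau_R((u+v)/2)\asymp_k\max\{\tau_R(u),\tau_R(v)\}$. By \eqref{eq:monomial-intrinsic-scales}, $\tau_R(t)\asymp_k\rho_R(\gamma_k(t))$. Hence the two denominators are comparable up to constants depending on $k$, and $\mathcal E_{R,k}(f)\asymp_k\mathcal E_R^{\Gamma_k}(f)$, as already noted after Lemma~\ref{lem:tau-midpoint-max}.

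Second, I would not quote Proposition~\ref{prop:energy-sum-general} blindly, since it refers to the partition of Lemma~\ref{lem:terminal-partition-general}. Instead I will redo its proof for the canonical partition, using only the admissibility properties established above. I decompose $[0,1]^2$ into $\theta\times\theta'$ and treat two cases.

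\emph{Neighboring pairs} $\theta\sim\theta'$. There are at most $2n_{\mathrm{nb}}+1$ caps between $\theta$ and $\theta'$, and consecutive caps are comparable by Lemma~\ref{lem:cap-gauss-diameter}. Therefore $\tau_R\asymp_k\tau_\theta\asymp_k\tau_{\theta'}$ on $\theta\cup\theta'$. By monotonicity of $\mathcal N_k$, $|\mathcal N_k(u)-\mathcal N_k(v)|$ is at most the sum of the angular diameters of the intermediate caps, so it is $\lesssim_k\tau_\theta$. The kernel is then $\asymp_k\tau_\theta^{-1}$ on $\theta\times\theta'$, and the contribution is $\asymp_k\|f_\theta\|_2^2\|f_{\theta'}\|_2^2/\tau_\theta$. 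Summing over neighbors, the inequality $ab\le(a^2+b^2)/2$ gives the upper bound by $\sum_\theta\|f_\theta\|_2^4/\tau_\theta$, since each cap has $O_k(1)$ neighbors and $\tau_\theta\asymp\tau_{\theta'}$. The diagonal terms $\theta=\theta'$ alone give the reverse inequality, because all terms are nonnegative.

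\emph{Nonneighboring pairs} $\theta\not\sim\theta'$. The separation property gives $|\mathcal N_k(u)-\mathcal N_k(v)|\gtrsim_k\tau_\theta+\tau_{\theta'}$ on $\theta\times\theta'$. Also $\max\{\tau_R(u),\tau_R(v)\}\asymp_k\tau_\theta+\tau_{\theta'}$ by Lemma~\ref{lem:cap-gauss-diameter}, so the cutoff is absorbed and the kernel is comparable to $|\mathcal N_k(u)-\mathcal N_k(v)|^{-1}$.

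Adding the two regimes yields \eqref{eq:energy-sum-monomial}. The only delicate point is the uniformity of these comparisons near the flat point $t\lesssim R^{-1/k}$ and at the merged final cap. Both are covered by \eqref{eq:Xi-cap-length} and the explicit formulas in Lemma~\ref{lem:cap-gauss-diameter}, so I expect no real obstacle beyond carefully tracking that all constants depend only on $k$.
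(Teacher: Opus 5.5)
Your proposal is correct and takes essentially the same route as the paper. The paper likewise splits into neighboring pairs, where Lemma~\ref{lem:tau-midpoint-max} and comparability of boundedly many consecutive caps make the kernel $\asymp_k\tau_\theta^{-1}$ and $2ab\le a^2+b^2$ reduces the sum to the diagonal terms, and nonneighboring pairs, where angular separation absorbs the cutoff. Your preliminary identification $\mathcal E_{R,k}\asymp_k\mathcal E_R^{\Gamma_k}$ is not needed for the argument, but it does no harm.
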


\begin{proof}
If $\theta\sim\theta'$, comparability of a bounded number of consecutive caps and Lemma~\ref{lem:tau-midpoint-max} give
\[
\tau_R\!\left(\frac{u+v}{2}\right)
\asymp_k
\tau_\theta
\asymp_k
\tau_{\theta'},
\qquad
|\mathcal N_k(u)-\mathcal N_k(v)|\lesssim_k\tau_\theta
\]
for $u\in\theta$, $v\in\theta'$. The neighboring contribution is therefore comparable to
\[
\sum_{\theta\sim\theta'}
\frac{\|f_\theta\|_2^2\|f_{\theta'}\|_2^2}{\tau_\theta},
\]
which, using bounded degree, comparable neighboring scales, and $2ab\le a^2+b^2$, is comparable to
\[
\sum_\theta\frac{\|f_\theta\|_2^4}{\tau_\theta}.
\]
If $\theta\not\sim\theta'$, angular separation gives
\[
|\mathcal N_k(u)-\mathcal N_k(v)|
\gtrsim_k
\tau_\theta+\tau_{\theta'},
\]
and Lemma~\ref{lem:tau-midpoint-max} shows that the cutoff is absorbed by normal separation. Summing the two regimes yields \eqref{eq:energy-sum-monomial}.
\end{proof}

\begin{theorem}[Monomial direct estimate on balls]\label{thm:ball-direct-monomial}
For real $k\ge3$, $R\ge1$, and every $x_0\in\mathbb R^2$,
\[
\int_{B(x_0,R)}|E_{\gamma_k}f(x)|^4\,\mathrm dx
\lesssim_k
\mathcal E_{R,k}(f).
\]
\end{theorem}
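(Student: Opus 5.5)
The plan is to obtain Theorem~\ref{thm:ball-direct-monomial} as a specialization of Theorem~\ref{thm:direct-general-ball} to $\Gamma_k=\gamma_k([0,1])$, followed by the energy identification $\mathcal E_R^{\Gamma_k}(f)\asymp_k\mathcal E_{R,k}(f)$. First I check the hypotheses of the general theorem for every real $k\ge3$. The parametrization is $C^2$ on $[0,1]$ and regular. The curve is convex, since $\phi_k''\ge0$, and it has no affine subarcs, since $\phi_k''>0$ on $(0,1]$. The turning measure satisfies $\mathrm d\mu_\kappa\asymp_k t^{k-2}\,\mathrm dt$, so $\mu_\kappa(B_{\Gamma_k}(\gamma_k(t),r))\asymp_k r(t+r)^{k-2}$ with endpoint truncation. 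This makes $\mu_\kappa$ doubling with $D=D(k)$.

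The angular range of $\Gamma_k$ is $\arctan k<\pi/2$, but it need not be below the $\theta_0$ fixed for the working charts. I therefore fix a finite chart cover depending only on $k$. All structural quantities ($r_*$, $R_*$, slope bounds, number of charts) then depend only on $k$, and Theorem~\ref{thm:direct-general-ball} gives
\[
\int_{B(x_0,R)}|E_{\gamma_k}f|^4\,\mathrm dx\lesssim_k\mathcal E_R^{\Gamma_k}(f)
\]
for all $R\ge1$. The arc-length measure is the one in $E_{\gamma_k}$, so no normalization issue arises.

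Next I compare the energies. By \eqref{eq:monomial-intrinsic-scales}, $\rho_R(\gamma_k(t))\asymp_k\tau_R(t)$ uniformly in $t\in[0,1]$ and $R\ge1$. The bounded range $R<R_*$ is covered by the freezing convention, on which both sides are comparable to constants depending on $k$. Lemma~\ref{lem:tau-midpoint-max} replaces $\max\{\tau_R(u),\tau_R(v)\}$ by $\tau_R((u+v)/2)$. Hence the two kernels are pointwise comparable, since $a+b\asymp a+b'$ whenever $b\asymp b'$. Integrating against $|f(u)|^2|f(v)|^2\,\mathrm d\sigma\,\mathrm d\sigma$ gives $\mathcal E_R^{\Gamma_k}(f)\asymp_k\mathcal E_{R,k}(f)$, which concludes the proof.

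The main obstacle is confirming the uniformity of \eqref{eq:monomial-intrinsic-scales}: the comparison must hold near the flat endpoint $t=0$, where the intrinsic ball is truncated, and for all $R$ including $R<R_*$. I would handle this as in the text. Since $r\mapsto r\mu_\kappa(B(\cdot,r))\asymp_k r^2(t+r)^{k-2}$ is monotone and scales at least quadratically under dilation of $r$, the root $r_R$ is pinned to $\ell_R(t)$ up to $k$-dependent factors. For $R<R_*(k)$, both $\ell_R$ and the frozen $r_R$ are bounded above and below by constants depending only on $k$.

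Alternatively, one can avoid the chart bookkeeping by running the chart-level argument directly on the canonical partition $\{\theta\}$, which is admissible. That argument applies Proposition~\ref{prop:biortho-general} to $H_\theta=\psi_RE_{\gamma_k}f_\theta$, bounds neighboring pairs via Lemma~\ref{lem:local-L2-general} and $\|E f_\theta\|_\infty^2\lesssim|\theta|\|f_\theta\|_2^2$ with $R|\theta|\asymp\tau_\theta^{-1}$, and bounds nonneighboring pairs by Proposition~\ref{prop:bilinear-identity} and Remark~\ref{rem:det-normal-distance}. The pieces are then assembled with Proposition~\ref{prop:energy-sum-monomial}.
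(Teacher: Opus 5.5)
Your proposal is correct and follows the paper's proof. You verify that $\Gamma_k$ satisfies the hypotheses of Theorem~\ref{thm:direct-general-ball}, with chart data and doubling constant depending only on $k$, and then pass from $\mathcal E_R^{\Gamma_k}$ to $\mathcal E_{R,k}$ via \eqref{eq:monomial-intrinsic-scales} and Lemma~\ref{lem:tau-midpoint-max}. Your extra bookkeeping (the $k$-dependent chart cover and the frozen range $R<R_*$) makes explicit what the paper compresses into ``the preceding verifications''.
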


\begin{proof}
By the preceding verifications, $\Gamma_k$ satisfies the hypotheses of Theorem~\ref{thm:direct-general-ball}; hence
\[
\int_{B(x_0,R)}|E_{\gamma_k}f|^4
\lesssim_k
\mathcal E_R^{\Gamma_k}(f).
\]
The comparison between $\mathcal E_R^{\Gamma_k}$ and $\mathcal E_{R,k}$ completes the proof.
\end{proof}

The explicit partition defined through $\Xi$ is retained as a concrete description of the terminal partition and as a calibration of the general mechanism.

\subsection{Radius comparison and weights}

\begin{proposition}[Energy comparison]\label{prop:radius-comparison-monomial}
For $\lambda\ge1$,
\[
\mathcal E_{\lambda R,k}(f)
\lesssim_k
\lambda^{(k-1)/k}\mathcal E_{R,k}(f).
\]
\end{proposition}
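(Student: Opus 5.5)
The plan is to prove a pointwise lower bound for the cutoff and then integrate, following the kernel argument at the end of the proof of Proposition~\ref{prop:radius-comparison-doubling}. The explicit formula \eqref{eq:tau-monomial} lets us read off the exponent $(k-1)/k$ directly, with no appeal to lower growth. Concretely, we will show that for every $t\in[0,1]$, $R\ge1$ and $\lambda\ge1$,
\[
\tau_{\lambda R}(t)\ge\lambda^{-(k-1)/k}\tau_R(t).
\]
Write $a=R^{-1/k}$ and $a'=(\lambda R)^{-1/k}=\lambda^{-1/k}a\le a$. Then
\[
\frac{\tau_{\lambda R}(t)}{\tau_R(t)}
=
\lambda^{-1/2}\left(\frac{t+a'}{t+a}\right)^{(k-2)/2}.
\]
Since $a'\le a$ and $\lambda^{-1/k}\le1$, we have $t+a'\ge\lambda^{-1/k}(t+a)$. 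Hence the ratio is at least
\[
\lambda^{-1/2}\lambda^{-(k-2)/(2k)}=\lambda^{-(k-1)/k}.
\]
This is an exact inequality with constant $1$, and it holds for all $t$, so no case distinction between the flat region and the nondegenerate region is needed.

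Next we apply this bound with $t=(u+v)/2$. Set $a_0=|\mathcal N_k(u)-\mathcal N_k(v)|$. Because $\lambda^{-(k-1)/k}\le1$,
\[
a_0+\tau_{\lambda R}\!\left(\tfrac{u+v}{2}\right)
\ge
\lambda^{-(k-1)/k}\left(a_0+\tau_R\!\left(\tfrac{u+v}{2}\right)\right).
\]
So the kernel of $\mathcal E_{\lambda R,k}$ is pointwise at most $\lambda^{(k-1)/k}$ times the kernel of $\mathcal E_{R,k}$. Integrating against the nonnegative measure $|f(u)|^2|f(v)|^2\,\mathrm d\sigma(u)\mathrm d\sigma(v)$ then gives the claim, in fact with implicit constant $1$.

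We do not expect a real obstacle here. The only point to check is that the worst case is the flat regime $t\lesssim(\lambda R)^{-1/k}$, where $\tau$ behaves like $R^{-(k-1)/k}$. In that regime the exponent $(k-1)/k$ is attained, which is consistent with the necessity statement in Proposition~\ref{prop:weighted-threshold-necessary} for $m=k-2$. Away from the flat point the ratio is only $\lambda^{-1/2}$, which is better.
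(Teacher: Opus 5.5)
Your proposal is correct and follows the paper's proof essentially verbatim: the pointwise bound $\tau_{\lambda R}(t)\ge\lambda^{-(k-1)/k}\tau_R(t)$ from $t+\lambda^{-1/k}R^{-1/k}\ge\lambda^{-1/k}(t+R^{-1/k})$, followed by the denominator comparison (using $\lambda^{-(k-1)/k}\le1$ on the normal-distance term) and integration of the kernels. The only slip is in the justification of that inequality: the relevant fact is $t\ge\lambda^{-1/k}t$ (because $t\ge0$ and $\lambda^{-1/k}\le1$) together with $a'=\lambda^{-1/k}a$ exactly, not $a'\le a$.
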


\begin{proof}
Set $a=R^{-1/k}$. By \eqref{eq:tau-monomial},
\[
\tau_{\lambda R}(t)
=
\lambda^{-1/2}R^{-1/2}
\bigl(t+\lambda^{-1/k}a\bigr)^{(k-2)/2}.
\]
Since $\lambda\ge1$,
\[
t+\lambda^{-1/k}a
\ge
\lambda^{-1/k}(t+a),
\]
and therefore
\[
\tau_{\lambda R}(t)
\ge
\lambda^{-1/2}\lambda^{-(k-2)/(2k)}\tau_R(t)
=
\lambda^{-(k-1)/k}\tau_R(t).
\]
Moreover,
\[
|\mathcal N_k(u)-\mathcal N_k(v)|
\ge
\lambda^{-(k-1)/k}|\mathcal N_k(u)-\mathcal N_k(v)|.
\]
Thus the denominator of the energy at radius $\lambda R$ is at least $\lambda^{-(k-1)/k}$ times the denominator at radius $R$. Integrating the corresponding kernels gives the conclusion.
\end{proof}

\begin{corollary}[Weighted direct estimate]\label{cor:weighted-direct-monomial}
If
\[
\iota>\frac{k-1}{k},
\]
then, for every $x_0\in\mathbb R^2$,
\[
\int_{\mathbb R^2}|E_{\gamma_k}f(x)|^4
\left(1+\frac{|x-x_0|}{R}\right)^{-\iota}\,\mathrm dx
\lesssim_{k,\iota}
\mathcal E_{R,k}(f).
\]
\end{corollary}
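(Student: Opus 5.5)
We follow the annular argument of Corollary~\ref{cor:weighted-doubling}, replacing the abstract doubling exponent by the explicit monomial exponent from Proposition~\ref{prop:radius-comparison-monomial}. Split the weight dyadically: write $\mathbb R^2=B(x_0,R)\cup\bigcup_{j\ge1}A_j$ with $A_j=B(x_0,2^jR)\setminus B(x_0,2^{j-1}R)$. On $B(x_0,R)$ the weight is at most $1$. On $A_j$ it is bounded by $(1+2^{j-1})^{-\iota}\lesssim_\iota 2^{-j\iota}$. The left-hand side is therefore dominated by
\[
\sum_{j\ge0}2^{-j\iota}\int_{B(x_0,2^jR)}|E_{\gamma_k}f(x)|^4\,\mathrm dx .
\]

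For each $j$ we apply Theorem~\ref{thm:ball-direct-monomial} at radius $2^jR\ge1$. This gives
\[
\int_{B(x_0,2^jR)}|E_{\gamma_k}f|^4\lesssim_k\mathcal E_{2^jR,k}(f),
\]
with a constant independent of $j$. Proposition~\ref{prop:radius-comparison-monomial} with $\lambda=2^j$ then yields
\[
\mathcal E_{2^jR,k}(f)\lesssim_k 2^{j(k-1)/k}\mathcal E_{R,k}(f).
\]
The whole expression is thus bounded by
\[
\mathcal E_{R,k}(f)\sum_{j\ge0}2^{-j(\iota-(k-1)/k)},
\]
a geometric series that converges exactly when $\iota>(k-1)/k$. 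Its sum is a constant depending only on $k$ and $\iota$.

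There is no real obstacle here. The only points to check are these:
\begin{enumerate}
\item The constants in the ball estimate and in the radius comparison are uniform in $R$ and $\lambda$. Both are stated that way.
\item The weight is comparable to $2^{-j\iota}$ on each annulus. This is elementary.
\end{enumerate}
All the substantive work sits in Theorem~\ref{thm:ball-direct-monomial} (through the general biorthogonality) and in the explicit lower bound $\tau_{\lambda R}\ge\lambda^{-(k-1)/k}\tau_R$, both already established. The sharpness of the threshold is not part of this statement; it is supplied separately by Proposition~\ref{prop:weighted-threshold-necessary} with $m=k-2$.
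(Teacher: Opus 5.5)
Your proposal is correct and follows essentially the same route as the paper: a dyadic annular decomposition, the ball estimate of Theorem~\ref{thm:ball-direct-monomial} applied at each radius $2^{j}R$, the radius comparison of Proposition~\ref{prop:radius-comparison-monomial} with $\lambda=2^{j}$, and summation of the geometric series $\sum_j 2^{-j(\iota-(k-1)/k)}$. The only difference is indexing: the paper uses $B(x_0,2R)$ and annuli $B(x_0,2^{j+1}R)\setminus B(x_0,2^{j}R)$, while you use $B(x_0,R)$ and $B(x_0,2^{j}R)\setminus B(x_0,2^{j-1}R)$, which changes nothing.
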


\begin{proof}
Write
\[
\mathbb R^2
=
B(x_0,2R)
\cup
\bigcup_{j\ge1}
\bigl(B(x_0,2^{j+1}R)\setminus B(x_0,2^jR)\bigr).
\]
On the annulus with index $j$, the weight is $O_{\iota}(2^{-j\iota})$. Applying Theorem~\ref{thm:ball-direct-monomial} at radius $2^{j+1}R$ and then Proposition~\ref{prop:radius-comparison-monomial}, each term is bounded, up to a constant depending on $k$ and $\iota$, by
\[
2^{-j\iota}2^{j(k-1)/k}\mathcal E_{R,k}(f).
\]
The geometric series converges exactly under the hypothesis $\iota>(k-1)/k$.
\end{proof}

Proposition~\ref{prop:weighted-threshold-necessary}, applied with $m=k-2$ and using the comparison between the general and monomial energies at fixed radius, shows that the uniform inequality fails for $\iota<(k-1)/k$. The endpoint $\iota=(k-1)/k$ remains open.

\section{Multiscale structure and the Frostman regime}
\label{sec:multiscale-frostman}

The direct theory for the monomial model reduces the extension problem to the adapted energy from Section~\ref{sec:monomial}. In this section we exploit the explicit description of the terminal cutoff to analyze that energy more precisely. We first obtain a multiscale representation on the normal arc and distinguish accumulated active correlation from concentration extracted at a single scale. We then impose an angular Frostman condition and derive the growth diagram in $R$, together with sharpness at the energy level. Sharpness of the restriction inequality does not follow from these constructions and is treated separately.

\subsection{Multiscale representation and detection of the energy}

\subsubsection{Terminal angular scale on the normal arc}

Let
\[
\Omega_k=\mathcal N_k([0,1]).
\]
Since $\mathcal N_k$ is injective, define
\begin{equation}\label{eq:rho-monomial-normal}
\rho_R(\mathcal N_k(t))=\tau_R(t).
\end{equation}

For a finite positive measure $\nu$ on $\Omega_k$, set
\begin{equation}\label{eq:intrinsic-I}
\mathcal I_R(\nu)=
\iint_{\Omega_k^2}
\frac{\mathrm d\nu(\omega)\mathrm d\nu(\omega')}
{|\omega-\omega'|+\max\{\rho_R(\omega),\rho_R(\omega')\}}.
\end{equation}
If $\nu=\nu_f=(\mathcal N_k)_\#(|f|^2\,\mathrm d\sigma)$, Lemma~\ref{lem:tau-midpoint-max} and \eqref{eq:energy-adapted-monomial} give
\[
\mathcal E_{R,k}(f)
\asymp_k
\mathcal I_R(\nu_f).
\]
Thus, once the terminal angular scale $\rho_R$ has been incorporated, the energy can be expressed entirely on the normal arc.

\subsubsection{Layer-cake identity}

Define
\[
\Delta_R(\omega,\omega')
=|\omega-\omega'|+\max\{\rho_R(\omega),\rho_R(\omega')\}
\]
and
\[
\widetilde P_R(r;\nu)
=(\nu\times\nu)\bigl\{(\omega,\omega'):\Delta_R(\omega,\omega')\le r\bigr\}.
\]
If $\nu(\Omega_k)=0$, all normalized statements below are trivial; from now on assume $\nu(\Omega_k)>0$. For $\nu=\nu_f$, in particular,
\[
\nu_f(\Omega_k)=\|f\|_2^2.
\]

\begin{proposition}[Exact layer-cake identity]\label{prop:layercake}
For every finite positive measure $\nu$ on $\Omega_k$,
\begin{equation}\label{eq:layercake}
\mathcal I_R(\nu)
=
\int_0^\infty \widetilde P_R(r;\nu)\,\frac{\mathrm dr}{r^2}.
\end{equation}
In particular,
\[
\frac{\mathcal I_R(\nu)}{\nu(\Omega_k)^2}
=
\int_0^\infty
\frac{\widetilde P_R(r;\nu)}{\nu(\Omega_k)^2r}\,\frac{\mathrm dr}{r}.
\]
\end{proposition}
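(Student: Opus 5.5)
The plan is to prove \eqref{eq:layercake} pointwise in the pair $(\omega,\omega')$ and then integrate, using Tonelli's theorem. The starting point is the elementary identity, valid for every $\Delta>0$,
\[
\frac1\Delta=\int_\Delta^\infty\frac{\mathrm dr}{r^2}=\int_0^\infty\mathbf 1_{\{\Delta\le r\}}\,\frac{\mathrm dr}{r^2}.
\]
We apply it with $\Delta=\Delta_R(\omega,\omega')$. This quantity is strictly positive everywhere on $\Omega_k^2$, because $\rho_R(\mathcal N_k(t))=\tau_R(t)\ge R^{-1/2}R^{-(k-2)/(2k)}=R^{-(k-1)/k}>0$. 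So the integrand of $\mathcal I_R(\nu)$ is finite and bounded by $R^{(k-1)/k}$. In particular $\mathcal I_R(\nu)\le R^{(k-1)/k}\nu(\Omega_k)^2<\infty$, although finiteness is not needed for the identity itself.

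Next we substitute this representation into \eqref{eq:intrinsic-I}. The function
\[
(\omega,\omega',r)\mapsto\mathbf 1_{\{\Delta_R(\omega,\omega')\le r\}}\,r^{-2}
\]
is nonnegative and jointly Borel measurable on $\Omega_k^2\times(0,\infty)$. Measurability holds because $\Delta_R$ is continuous: $\tau_R$ is continuous on $[0,1]$ and $\mathcal N_k$ is a homeomorphism onto $\Omega_k$, so $\rho_R$ is continuous on $\Omega_k$, and the maximum and the distance are continuous. Tonelli's theorem for the finite measure $\nu\times\nu$ and Lebesgue measure on $(0,\infty)$ then lets us exchange the order of integration. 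The inner integral over $\Omega_k^2$ is exactly $(\nu\times\nu)\{\Delta_R\le r\}=\widetilde P_R(r;\nu)$, which gives \eqref{eq:layercake}.

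The normalized form follows by dividing by $\nu(\Omega_k)^2>0$ and writing $\mathrm dr/r^2=r^{-1}\cdot\mathrm dr/r$. As a remark, $\widetilde P_R(r;\nu)=0$ for $r<R^{-(k-1)/k}$, so the integral effectively starts at that scale. The only point requiring any care is the measurability and continuity of $\rho_R$ needed to justify Tonelli; there is no real obstacle.
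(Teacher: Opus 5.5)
Your proposal is correct and follows the paper's proof: apply $1/a=\int_a^\infty r^{-2}\,\mathrm dr$ with $a=\Delta_R(\omega,\omega')$ and exchange the integrals by Tonelli's theorem. Your checks that $\Delta_R\ge R^{-(k-1)/k}>0$ and that $\Delta_R$ is continuous, hence jointly measurable, supply details the paper leaves implicit.
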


\begin{proof}
For $a>0$,
\[
\frac1a=\int_a^\infty\frac{\mathrm dr}{r^2}.
\]
Apply this identity with $a=\Delta_R(\omega,\omega')$ and use Tonelli's theorem.
\end{proof}

The proposition is abstract: once $\rho_R$ is fixed, it uses no further property of the monomial. The specific geometry reappears when we describe which points of the normal arc are active at a given scale.

\subsubsection{Active profile}

Define the active set
\[
\Omega_R(r)=\{\omega\in\Omega_k:\rho_R(\omega)\le r\}
\]
and the mass of active pairs at scale $r$ by
\begin{align*}
P_R(r;\nu)=\iint &\mathbf 1_{\{|\omega-\omega'|\le r\}}
\mathbf 1_{\Omega_R(r)}(\omega)
\mathbf 1_{\Omega_R(r)}(\omega')\\
&\mathrm d\nu(\omega)\mathrm d\nu(\omega').
\end{align*}
The normalized active correlation is
\[
\mathfrak q_R(r;\nu)=\frac{P_R(r;\nu)}{\nu(\Omega_k)^2r}.
\]

\begin{lemma}[Active comparison]\label{lem:P-tildeP}
For $r>0$,
\[
P_R(r/2;\nu)\le\widetilde P_R(r;\nu)\le P_R(r;\nu).
\]
\end{lemma}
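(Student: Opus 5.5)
The plan is to prove both inequalities by comparing the sets involved pointwise. Integrating the indicator of a smaller set against $\nu\times\nu$ gives a smaller number, so it suffices to show
\[
\{\text{active pairs at scale } r/2\}\subset\{\Delta_R\le r\}\subset\{\text{active pairs at scale } r\}.
\]
Here a pair $(\omega,\omega')$ is active at scale $s$ if $|\omega-\omega'|\le s$ and both $\omega,\omega'\in\Omega_R(s)$. Only the definitions of $\Delta_R$, $\Omega_R$, $P_R$ and $\widetilde P_R$ are needed; no property of the monomial geometry enters.

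For the upper bound, suppose $\Delta_R(\omega,\omega')\le r$. Both summands of $\Delta_R$ are nonnegative, so $|\omega-\omega'|\le r$ and $\max\{\rho_R(\omega),\rho_R(\omega')\}\le r$. The second inequality gives $\rho_R(\omega)\le r$ and $\rho_R(\omega')\le r$, that is, $\omega,\omega'\in\Omega_R(r)$. Hence the integrand of $P_R(r;\nu)$ equals $1$ at $(\omega,\omega')$. This proves the pointwise inclusion, and integrating against $\nu\times\nu$ gives $\widetilde P_R(r;\nu)\le P_R(r;\nu)$.

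For the lower bound, suppose the pair is active at scale $r/2$. Then $|\omega-\omega'|\le r/2$, and $\rho_R(\omega),\rho_R(\omega')\le r/2$, so the maximum is at most $r/2$. Adding the two bounds gives $\Delta_R(\omega,\omega')\le r$. Integrating again yields $P_R(r/2;\nu)\le\widetilde P_R(r;\nu)$.

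No step is a genuine obstacle. The one point to check is measurability of the sets involved, so that the integrals make sense. $\rho_R$ is continuous on $\Omega_k$, since $\tau_R$ and the inverse of the injective continuous map $\mathcal N_k$ on the compact interval $[0,1]$ are continuous. Therefore $\Omega_R(r)$ and $\{\Delta_R\le r\}$ are closed sets, and all the indicators are Borel measurable.
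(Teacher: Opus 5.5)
Your proposal is correct and follows the same route as the paper: both inequalities come from the pointwise inclusions $\{\text{active at scale } r/2\}\subset\{\Delta_R\le r\}\subset\{\text{active at scale } r\}$, integrated against $\nu\times\nu$. Your added measurability remark, via continuity of $\rho_R=\tau_R\circ\mathcal N_k^{-1}$ on $\Omega_k$, is valid and harmless.
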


\begin{proof}
If $\Delta_R(\omega,\omega')\le r$, then both terminal values are $\le r$ and $|\omega-\omega'|\le r$, which gives the upper bound. Conversely, if both terminal values are $\le r/2$ and $|\omega-\omega'|\le r/2$, then $\Delta_R(\omega,\omega')\le r$.
\end{proof}

Let $r_j=2^{-j}$ and choose $j_R^\ast$ so that
\[
2^{-j_R^\ast}\asymp R^{-(k-1)/k},
\]
the minimal terminal angular scale. Below this scale there are no active pairs. Above a fixed scale, the layer-cake integral contributes $O(\nu(\Omega_k)^2)$; this is the macroscopic term that appears as $1$ after normalization. In the intermediate range, \Cref{prop:layercake,lem:P-tildeP} and a dyadic discretization give
\begin{equation}\label{eq:dyadic-energy-profile}
\frac{\mathcal E_{R,k}(f)}{\|f\|_2^4}
\asymp_k
1+\sum_{j=0}^{j_R^\ast}\mathfrak q_R(r_j;\nu_f).
\end{equation}
In particular, apart from the macroscopic term, the normalized energy is equivalent, with constants depending on $k$, to the accumulation of active angular correlation over the relevant scales.

\subsubsection{Multiscale detection of active correlation}

Set $\nu=\nu_f$ for the remainder of this subsection.

\begin{corollary}[Multiscale detection of the energy]\label{cor:multiscale-detection}
If
\[
\mathcal E_{R,k}(f)\ge\Lambda\|f\|_2^4,
\]
then
\[
\sum_{j=0}^{j_R^\ast}\mathfrak q_R(r_j;\nu_f)
\gtrsim_k \Lambda-C_k.
\]
In particular, for $\Lambda$ larger than a structural constant,
\[
\sum_{j=0}^{j_R^\ast}\mathfrak q_R(r_j;\nu_f)\gtrsim_k\Lambda.
\]
\end{corollary}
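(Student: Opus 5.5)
The corollary follows directly from the dyadic profile equivalence \eqref{eq:dyadic-energy-profile}, and the plan is simply to read off its upper half. That equivalence provides a constant $C_k'$ such that
\[
\frac{\mathcal E_{R,k}(f)}{\|f\|_2^4}
\le
C_k'\Bigl(1+\sum_{j=0}^{j_R^\ast}\mathfrak q_R(r_j;\nu_f)\Bigr).
\]
Inserting the hypothesis $\mathcal E_{R,k}(f)\ge\Lambda\|f\|_2^4$ (the case $f=0$ being vacuous, since then the hypothesis forces $\Lambda\le0$ and the claim is trivial) gives $\sum_j\mathfrak q_R(r_j;\nu_f)\ge \Lambda/C_k'-1$. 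This is the first assertion with implicit constant $1/C_k'$ and $C_k=C_k'$.

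Since \eqref{eq:dyadic-energy-profile} is only sketched in the text, I would make its upper bound explicit, as that is the only direction used. First, use the layer-cake identity of Proposition~\ref{prop:layercake}, together with $\mathcal E_{R,k}(f)\asymp_k\mathcal I_R(\nu_f)$. Next, split the $r$-integral into three ranges: $r<c\,R^{-(k-1)/k}$, $r$ in the intermediate range, and $r\ge1$. Three facts handle these pieces.
\begin{itemize}
\item Since $\rho_R\ge\min\tau_R\gtrsim_k R^{-(k-1)/k}$, the set $\{\Delta_R\le r\}$ is empty below that scale. Hence $\widetilde P_R=0$ there once the constant $c$ is chosen small.
\item For $r\ge1$ we have $\widetilde P_R\le\nu(\Omega_k)^2$, so this range contributes at most $\nu(\Omega_k)^2$.
\item On each dyadic block $[r_{j+1},r_j]$, Lemma~\ref{lem:P-tildeP} and the monotonicity of $P_R(\cdot;\nu)$ give $\widetilde P_R(r)\le P_R(r_j)$. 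Therefore the block contributes at most $P_R(r_j)\int_{r_{j+1}}^{r_j}r^{-2}\,\mathrm dr\le P_R(r_j)/r_j$.
\end{itemize}
Summing over $0\le j\le j_R^\ast+O_k(1)$ and dividing by $\nu_f(\Omega_k)^2=\|f\|_2^4$ yields the upper bound. The finitely many extra indices beyond $j_R^\ast$ are harmless: at those scales the active set is either empty or has $\rho_R\asymp r$, so they can be absorbed after adjusting $j_R^\ast$ by an $O_k(1)$ shift, which is permitted since $j_R^\ast$ is defined only up to $\asymp$.

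For the second assertion, take $\Lambda\ge 2C_k$. Then $\Lambda-C_k\ge\Lambda/2$, and the first assertion gives $\sum_j\mathfrak q_R(r_j;\nu_f)\gtrsim_k\Lambda$.

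The only point requiring care is the bookkeeping at the lower endpoint $j_R^\ast$, namely confirming that the cutoff index in \eqref{eq:dyadic-energy-profile} captures all scales on which $\widetilde P_R$ can be nonzero. I would handle it as above by choosing $j_R^\ast$ with $2^{-j_R^\ast}\le c_k\min\tau_R$.
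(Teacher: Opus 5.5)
Your proof is correct and follows the paper's argument: the corollary is read off from the upper half of \eqref{eq:dyadic-energy-profile}, with $C_k$ coming from the macroscopic term. Your explicit check of that upper half, via the layer-cake identity, Lemma~\ref{lem:P-tildeP}, and monotonicity of $P_R(\cdot;\nu)$ on dyadic blocks, is a sound expansion of what the paper only sketches. One small slip: for $f=0$ the hypothesis $0\ge\Lambda\cdot 0$ holds for every $\Lambda$ (it does not force $\Lambda\le 0$), and $\mathfrak q_R$ is then undefined, so that case is excluded by the paper's standing convention $\nu(\Omega_k)>0$ rather than being trivially true.
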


\begin{proof}
This is an immediate consequence of \eqref{eq:dyadic-energy-profile}. The constant $C_k$ corresponds to the macroscopic contribution represented by the term $1$ in that equivalence.
\end{proof}

The primary interpretation is
\[
\text{large normalized energy}
\Longleftrightarrow
\text{accumulated multiscale active angular correlation}.
\]
This equivalence quantifies how the energy is distributed among active angular correlations without classifying near-extremizers: the sum may be dominated by a single scale or spread across many scales. The next subsection extracts one scale and one angular ball with quantitative concentration. The quantity $n_{\mathrm{eff}}$ defined at the end of the section measures how many scales are effective for the particular measure.

\subsubsection{Maximal concentration and selection of one scale}

Define the maximal active concentration directly by
\[
\mathfrak C_R(r;\nu)
=
\sup_{\omega\in\Omega_k}
\frac{\nu(B(\omega,r)\cap\Omega_R(r))}{\nu(\Omega_k)r}.
\]
We also have
\[
P_R(r;\nu)
=
\int_{\Omega_R(r)}
\nu(B(\omega,r)\cap\Omega_R(r))\,\mathrm d\nu(\omega),
\]
and hence
\begin{equation}\label{eq:q-less-concentration}
\mathfrak q_R(r;\nu)
=
\frac1{\nu(\Omega_k)}
\int_{\Omega_R(r)}
\frac{\nu(B(\omega,r)\cap\Omega_R(r))}{\nu(\Omega_k)r}\,\mathrm d\nu(\omega)
\le\mathfrak C_R(r;\nu).
\end{equation}
There is no uniform linear inequality in the reverse direction: a set of small mass may have large maximal density without contributing proportionally to the average correlation.

\begin{lemma}[Quadratic relation between correlation and concentration]\label{lem:quadratic-concentration}
For every $r>0$,
\[
\mathfrak q_R(2r;\nu)
\ge
\frac r2\,\mathfrak C_R(r;\nu)^2.
\]
\end{lemma}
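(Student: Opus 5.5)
The plan is to pick a near-extremal center for $\mathfrak C_R(r;\nu)$ and show that every pair of active points in a single ball of radius $r$ is counted in $P_R(2r;\nu)$. Fix $r>0$ and $\varepsilon>0$, and choose $\omega_0\in\Omega_k$ with
\[
\frac{\nu(B(\omega_0,r)\cap\Omega_R(r))}{\nu(\Omega_k)r}\ge(1-\varepsilon)\,\mathfrak C_R(r;\nu).
\]
Set $E=B(\omega_0,r)\cap\Omega_R(r)$. Two observations are needed. First, the active sets are monotone, $\Omega_R(r)\subset\Omega_R(2r)$. Second, any $\omega,\omega'\in E$ satisfy $|\omega-\omega'|\le 2r$ by the triangle inequality. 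Hence $E\times E$ is contained in the set whose indicator defines $P_R(2r;\nu)$, so
\[
P_R(2r;\nu)\ge(\nu\times\nu)(E\times E)=\nu(E)^2.
\]

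Next we normalize. Dividing by $\nu(\Omega_k)^2\cdot 2r$ gives
\[
\mathfrak q_R(2r;\nu)\ge\frac{\nu(E)^2}{2r\,\nu(\Omega_k)^2}
=\frac{r}{2}\left(\frac{\nu(E)}{\nu(\Omega_k)r}\right)^2
\ge\frac r2(1-\varepsilon)^2\,\mathfrak C_R(r;\nu)^2 .
\]
Letting $\varepsilon\downarrow0$ yields the claim.

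The only point needing care is whether the supremum is finite. If $\nu(\Omega_k)>0$, then $\mathfrak C_R(r;\nu)\le 1/r<\infty$, so the near-extremal choice of $\omega_0$ is legitimate. There is also the question of whether balls are open or closed. The definition of $P_R$ uses the condition $|\omega-\omega'|\le r$, and the diameter bound $2r$ holds in either case, so this causes no issue. There is no real obstacle: the argument amounts to the observation that a single heavy ball produces a square of correlated pairs at twice the radius.
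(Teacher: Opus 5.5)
Your proposal is correct and follows essentially the same argument as the paper: take a near-extremal center $\omega_0$ for $\mathfrak C_R(r;\nu)$, note that every pair in $B(\omega_0,r)\cap\Omega_R(r)$ is at distance at most $2r$ with both terminal values at most $r\le 2r$, deduce $P_R(2r;\nu)\ge\nu(B(\omega_0,r)\cap\Omega_R(r))^2$, and normalize by $2r\,\nu(\Omega_k)^2$. Your explicit $\varepsilon$-approximation and the finiteness check $\mathfrak C_R(r;\nu)\le 1/r$ simply spell out what the paper leaves implicit in ``nearly extremal''.
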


\begin{proof}
Let $\omega_0$ be nearly extremal for $\mathfrak C_R(r;\nu)$. Every pair of points in
\[
B(\omega_0,r)\cap\Omega_R(r)
\]
has angular distance $\le2r$ and terminal values $\le r\le2r$. Therefore
\[
P_R(2r;\nu)\ge \nu(B(\omega_0,r)\cap\Omega_R(r))^2.
\]
Dividing by $2r\nu(\Omega_k)^2$ and taking the supremum gives the assertion.
\end{proof}

\begin{corollary}[Selection of one scale and angular concentration]\label{cor:one-scale-concentration}
If
\[
\mathcal E_{R,k}(f)\ge\Lambda\|f\|_2^4,
\]
then there exist a scale $r$ comparable to one of the dyadic scales between the minimal terminal scale and $1$, and a center $\omega_0\in\Omega_k$, such that
\[
\nu_f(B(\omega_0,Cr))
\gtrsim_k
\frac{\Lambda-C_k}{1+\log R}\,r\|f\|_2^2.
\]
The conclusion is nontrivial when $\Lambda$ dominates the structural constant $C_k$.
\end{corollary}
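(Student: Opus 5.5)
The argument is a pigeonhole over dyadic scales, using Corollary~\ref{cor:multiscale-detection} to bring in the energy lower bound and \eqref{eq:q-less-concentration} to pass from correlation to concentration. We may assume $\Lambda$ exceeds a large structural constant; otherwise the right-hand side is nonpositive (or trivially bounded) and there is nothing to prove.

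First, Corollary~\ref{cor:multiscale-detection} gives
\[
\sum_{j=0}^{j_R^\ast}\mathfrak q_R(r_j;\nu_f)\gtrsim_k\Lambda-C_k .
\]
Next, count the scales. Since $2^{-j_R^\ast}\asymp R^{-(k-1)/k}$, we have $j_R^\ast+1\lesssim_k 1+\log R$. Pigeonholing therefore yields an index $j$ with $0\le j\le j_R^\ast$ and
\[
\mathfrak q_R(r_j;\nu_f)\gtrsim_k\frac{\Lambda-C_k}{1+\log R}.
\]
This is the only place the factor $1+\log R$ enters, and it is exactly the loss from universal scale selection mentioned in the Introduction.

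Now convert correlation into concentration at this scale. By \eqref{eq:q-less-concentration}, $\mathfrak q_R(r_j;\nu_f)\le\mathfrak C_R(r_j;\nu_f)$. By the definition of $\mathfrak C_R$ as a supremum, there is a center $\omega_0\in\Omega_k$ with
\[
\frac{\nu_f\bigl(B(\omega_0,r_j)\cap\Omega_R(r_j)\bigr)}{\nu_f(\Omega_k)\,r_j}\ge\tfrac12\,\mathfrak C_R(r_j;\nu_f).
\]
Drop the intersection with $\Omega_R(r_j)$ by monotonicity, and use $\nu_f(\Omega_k)=\|f\|_2^2$. This gives
\[
\nu_f\bigl(B(\omega_0,r_j)\bigr)\gtrsim_k\frac{\Lambda-C_k}{1+\log R}\,r_j\|f\|_2^2 ,
\]
so we may take $r=r_j$ and $C=1$. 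The flexibility in $C$ and in "$r$ comparable to a dyadic scale" is only needed if one prefers to go through the layer-cake form with $\widetilde P_R$ and Lemma~\ref{lem:P-tildeP}, which shifts scales by a factor of $2$.

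The main point needing care is the justification of the discretization \eqref{eq:dyadic-energy-profile}, which is inherited from Corollary~\ref{cor:multiscale-detection}. In particular, two things must hold: the scales below $2^{-j_R^\ast}$ must contribute nothing, which is true because $\rho_R\ge\min\tau_R\asymp_k R^{-(k-1)/k}$ makes $\Omega_R(r)$ empty there; and the scales above $1$ must contribute only $O(\nu(\Omega_k)^2)$, since $\Omega_k$ has bounded diameter. Once these are granted, the remainder is the pigeonhole step and the supremum extraction described above.
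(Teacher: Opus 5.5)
Your proposal is correct and follows the paper's proof essentially step for step. You pigeonhole over the $O_k(1+\log R)$ dyadic scales using Corollary~\ref{cor:multiscale-detection}, pass from $\mathfrak q_R$ to $\mathfrak C_R$ via \eqref{eq:q-less-concentration}, and pick a nearly extremal center; taking $r=r_j$ and $C=1$ is a harmless sharpening of the paper's ``absorb the comparable radius into $C$'', and your preliminary reduction to large $\Lambda$ is unnecessary because the argument already works for every $\Lambda>C_k$.
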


\begin{proof}
There are $O_k(1+\log R)$ relevant dyadic scales. Corollary~\ref{cor:multiscale-detection} and the pigeonhole principle yield a scale $r_j$ with
\[
\mathfrak q_R(r_j;\nu_f)
\gtrsim_k
\frac{\Lambda-C_k}{1+\log R}.
\]
By \eqref{eq:q-less-concentration}, $\mathfrak C_R(r_j;\nu_f)$ satisfies the same lower bound. Choose a nearly extremal center and absorb the passage from $r_j$ to a comparable radius into the constant $C$.
\end{proof}

The loss $1+\log R$ is the universal loss associated with selecting a single scale. It cannot be removed in general: on a nondegenerate subarc, a measure whose angular density is comparable to length satisfies $P_R(r)\asymp \nu(\Omega_k)^2r$ at every active scale $R^{-1/2}\lesssim r\lesssim1$, so $\mathfrak q_R(r)\asymp1$ along $\asymp\log R$ dyadic scales.

\subsubsection{Effective number of scales}

For fixed $R$, if the values $\mathfrak q_R(r_j;\nu)$, $0\le j\le j_R^\ast$, are not all zero, define
\[
n_{\mathrm{eff}}(\nu)
=
\frac{\sum_{j=0}^{j_R^\ast}\mathfrak q_R(r_j;\nu)}
{\max_{0\le j\le j_R^\ast}\mathfrak q_R(r_j;\nu)}.
\]
Then
\[
1\le n_{\mathrm{eff}}(\nu)\le j_R^\ast+1\lesssim_k1+\log R.
\]
Since
\[
\max_{0\le j\le j_R^\ast}\mathfrak q_R(r_j;\nu)
=
\frac{\sum_{j=0}^{j_R^\ast}\mathfrak q_R(r_j;\nu)}{n_{\mathrm{eff}}(\nu)},
\]
the same argument as in Corollary~\ref{cor:one-scale-concentration} replaces the universal loss by the effective number of scales of the particular measure: if
\[
\mathcal E_{R,k}(f)\ge\Lambda\|f\|_2^4,
\]
then, whenever the conclusion is nontrivial, there exist $r$ and $\omega_0$ such that
\[
\nu_f(B(\omega_0,Cr))
\gtrsim_k
\frac{\Lambda-C_k}{n_{\mathrm{eff}}(\nu_f)}\,r\|f\|_2^2.
\]

The two extremes have different interpretations. For a measure approximately uniform on a nondegenerate subarc, $\mathfrak q_R(r_j;\nu)\asymp1$ along $\asymp\log R$ scales, and therefore
\[
n_{\mathrm{eff}}(\nu)\asymp\log R.
\]
By contrast, if the mass is concentrated on a single terminal cap, the sequence $\mathfrak q_R(r_j;\nu)$ is geometrically dominated by the terminal scale and
\[
n_{\mathrm{eff}}(\nu)\asymp1.
\]
Thus $n_{\mathrm{eff}}$ distinguishes energy spread across many scales from energy essentially concentrated at one scale.

\subsection{Angular Frostman regime}\label{sec:frostman}

The Frostman analysis is first carried out entirely at the energy level: a Frostman condition on an angular measure is used to estimate the intrinsic energy $\mathcal I_R$. The final corollary then combines that estimate with the monomial direct theorem to obtain a restriction bound. Keeping these two steps separate is essential for the sharpness statement.

Let $\nu$ be a finite measure on $\Omega_k$. If $\nu(\Omega_k)=0$, the statements are trivial; henceforth assume $\nu(\Omega_k)>0$. We use the Frostman condition in its standard sense of upper growth on balls; for the classical framework of Frostman measures and Riesz energies see Mattila~\cite{Mattila1995}. Assume in addition that
\begin{equation}\label{eq:frostman-assumption}
\nu(B(\omega,r))\le F_s\,\nu(\Omega_k)\,r^s,
\qquad 0<r\le1,
\qquad 0<s\le1.
\end{equation}
We estimate \eqref{eq:intrinsic-I} from this condition. Since $\Omega_k$ is a fixed arc that can be covered by $O_k(1)$ balls of radius $1$, \eqref{eq:frostman-assumption} implies $F_s\gtrsim_k1$. In particular, contributions with a single power of $F_s$ can be absorbed into a global bound with factor $F_s^2$.

\subsubsection{Geometry of the active set}

Recall that $\rho_R(\mathcal N_k(t))=\tau_R(t)$. Define the endpoints of the active scale by
\[
\rho_0:=\rho_R(\mathcal N_k(0))=R^{-(k-1)/k}
\]
and
\[
\rho_1:=\rho_R(\mathcal N_k(1))
=R^{-1/2}(1+R^{-1/k})^{(k-2)/2}
\asymp_k R^{-1/2}.
\]
The function $t\mapsto\rho_R(\mathcal N_k(t))$ is increasing. Hence, if $r<\rho_0$, the active set $\Omega_R(r)$ is empty; if $r\ge\rho_1$, it equals all of $\Omega_k$; and for $\rho_0\le r\le\rho_1$ there is a unique $t_R(r)\in[0,1]$ such that
\[
\Omega_R(r)=\mathcal N_k([0,t_R(r)]).
\]
The explicit formula for $\tau_R$ gives exactly
\[
\frac r{\rho_0}
=
\bigl(1+t_R(r)R^{1/k}\bigr)^{(k-2)/2},
\]
and therefore
\[
t_R(r)
=
R^{-1/k}
\left[
\left(\frac r{\rho_0}\right)^{2/(k-2)}-1
\right].
\]

Let
\[
L_R(r):=\operatorname{diam}\Omega_R(r)
\]
be the Euclidean diameter of the active set on the normal arc. By the normal--parameter comparison for the monomial model,
\[
L_R(r)\asymp_k t_R(r)^{k-1}.
\]
Consequently,
\begin{equation}\label{eq:active-angular-length}
L_R(r)
\asymp_k
\rho_0
\left[
\left(\frac r{\rho_0}\right)^{2/(k-2)}-1
\right]^{k-1},
\qquad
\rho_0\le r\le\rho_1.
\end{equation}
In particular,
\begin{equation}\label{eq:active-angular-upper}
L_R(r)
\lesssim_k
\rho_0\left(\frac r{\rho_0}\right)^{a_k},
\qquad
a_k=\frac{2(k-1)}{k-2}.
\end{equation}
Formula \eqref{eq:active-angular-length}, not merely its upper bound, is what detects the critical threshold.

\subsubsection{Bound for active pairs}

By definition of the active profile,
\[
P_R(r;\nu)
=
\int_{\Omega_R(r)}
\nu\bigl(B(\omega,r)\cap\Omega_R(r)\bigr)
\,\mathrm d\nu(\omega).
\]
The Frostman hypothesis gives, for every center,
\[
\nu(B(\omega,r))\le F_s\,\nu(\Omega_k)\,r^s.
\]
Moreover, since $\Omega_R(r)$ is contained in an angular ball of radius comparable to $L_R(r)$,
\[
\nu(\Omega_R(r))
\le \nu(\Omega_k)\min\{1,F_sL_R(r)^s\}.
\]
Therefore
\begin{equation}\label{eq:frostman-P-bound}
P_R(r;\nu)
\lesssim
F_s\,\nu(\Omega_k)^2r^s\min\{1,F_sL_R(r)^s\}.
\end{equation}
This form retains finer information on the dependence on $F_s$. For the diagram of powers of $R$, it suffices in the flat regime to replace the minimum by $F_sL_R(r)^s$.

\subsubsection{The critical threshold}

By Lemma~\ref{lem:P-tildeP} and the layer-cake identity, the contribution of the flat regime is controlled, up to constants, by
\[
\int_{\rho_0}^{\rho_1}
P_R(r;\nu)\,\frac{\mathrm dr}{r^2}.
\]
Using \eqref{eq:frostman-P-bound} and \eqref{eq:active-angular-length}, and setting
\[
x=\frac r{\rho_0},
\qquad
X_R=\frac{\rho_1}{\rho_0}\asymp_k R^{(k-2)/(2k)},
\]
we obtain the bound
\begin{equation}\label{eq:flat-integral}
F_s^2\nu(\Omega_k)^2\rho_0^{2s-1}
\int_1^{X_R}
x^{s-2}
\left(x^{2/(k-2)}-1\right)^{s(k-1)}\,\mathrm dx.
\end{equation}
Near $x=1$, the integrand is integrable. For $x\gg1$, it has order
\[
x^{s(1+a_k)-2}.
\]
The transition between convergence, logarithmic growth, and domination by the upper endpoint occurs when $s(1+a_k)=1$. Thus the critical exponent is
\begin{equation}\label{eq:critical-s}
s_c(k)=\frac1{1+a_k}=\frac{k-2}{3k-4}.
\end{equation}

\begin{theorem}[Frostman diagram]\label{thm:frostman-phase}
Let $k\ge3$ and fix $0<s\le1$. Under \eqref{eq:frostman-assumption},
\[
\frac{\mathcal I_R(\nu)}{\nu(\Omega_k)^2}
\lesssim_{k,s}
F_s^2
\begin{cases}
R^{\frac{k-1}{k}(1-2s)},&0<s<s_c(k),\\[1mm]
R^{\frac{k-1}{3k-4}}(1+\log R),&s=s_c(k),\\[1mm]
R^{(1-s)/2},&s_c(k)<s<1,\\[1mm]
1+\log R,&s=1.
\end{cases}
\]
For $s$ near $1$, a uniform form of the nondegenerate contribution, including the macroscopic part, is
\[
1+\frac{R^{(1-s)/2}-1}{1-s},
\]
with limit $1+\frac12\log R$ as $s\to1$.
\end{theorem}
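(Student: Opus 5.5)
The plan is to start from the exact layer-cake identity (Proposition~\ref{prop:layercake}) and split the $r$-integral into three ranges: $0<r<\rho_0$, $\rho_0\le r\le\rho_1$, and $r>\rho_1$.

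\emph{Range $r<\rho_0$.} Here $\widetilde P_R(r;\nu)=0$, because $\Delta_R\ge\rho_0$ everywhere, so this range contributes nothing.

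\emph{Range $r\ge\rho_1$, further split at $r=1$.} For $r\ge1$ use the trivial bound $\widetilde P_R\le\nu(\Omega_k)^2$. This gives the macroscopic term $\nu(\Omega_k)^2$, which is absorbed since $F_s\gtrsim_k1$. For $\rho_1\le r\le1$ the active set is all of $\Omega_k$. Lemma~\ref{lem:P-tildeP} and the Frostman hypothesis then give $\widetilde P_R(r)\le P_R(r)\le F_s\nu(\Omega_k)^2r^s$. Hence this piece is at most
\[
F_s\nu(\Omega_k)^2\int_{\rho_1}^1r^{s-2}\,\mathrm dr,
\]
which is $\asymp F_s\nu(\Omega_k)^2\rho_1^{s-1}/(1-s)\asymp R^{(1-s)/2}$ for $s<1$, and $\asymp\log(1/\rho_1)\asymp\log R$ for $s=1$. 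Computing the integral exactly, $(\rho_1^{s-1}-1)/(1-s)$, gives the stated uniform form $1+(R^{(1-s)/2}-1)/(1-s)$ and its limit $1+\tfrac12\log R$.

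\emph{Flat range $\rho_0\le r\le\rho_1$.} Use $\widetilde P_R(r)\le P_R(r)$ together with \eqref{eq:frostman-P-bound}, replacing the minimum by $F_sL_R(r)^s$. Then insert \eqref{eq:active-angular-length} and substitute $x=r/\rho_0$, which yields the bound \eqref{eq:flat-integral}. It remains to estimate
\[
J(s)=\int_1^{X_R}x^{s-2}\bigl(x^{2/(k-2)}-1\bigr)^{s(k-1)}\,\mathrm dx .
\]
On $[1,2]$ the integrand is bounded, since the bracket vanishes at $x=1$ and the exponent is positive; this gives $O(1)$. On $[2,X_R]$ the integrand is $\asymp x^{s(1+a_k)-2}$. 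So $J(s)\lesssim_{k,s}1$ if $s(1+a_k)<1$, $J(s)\lesssim1+\log X_R\lesssim1+\log R$ at $s=s_c$, and $J(s)\lesssim X_R^{s(1+a_k)-1}$ above criticality.

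Multiplying by $\rho_0^{2s-1}=R^{\frac{k-1}{k}(1-2s)}$ gives the following.
\begin{itemize}
\item Subcritical case: $R^{\frac{k-1}{k}(1-2s)}$.
\item Critical case: this requires checking the exponent. With $s_c=\frac{k-2}{3k-4}$ we have $1-2s_c=\frac{k}{3k-4}$, so $\frac{k-1}{k}(1-2s_c)=\frac{k-1}{3k-4}$. This gives $R^{\frac{k-1}{3k-4}}(1+\log R)$.
\item Supercritical case: $\rho_0^{2s-1}X_R^{s(1+a_k)-1}$, which equals $\rho_0^{2s-1}(\rho_1/\rho_0)^{s(1+a_k)-1}$. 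Using $\rho_0^{a_k}\rho_1^{-a_k}=(\rho_0/\rho_1)^{a_k}$, the power of $R$ collapses to the bound $\rho_1^{s(1+a_k)-1}\rho_0^{s(1-a_k)}$. Equivalently, one can bound $L_R(r)\lesssim1$ once $t_R(r)\asymp1$, but that loses accuracy.
\end{itemize}

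The supercritical bookkeeping is the step I expect to be delicate. The cleaner route is to note that $L_R(\rho_1)\asymp1$. In that case the flat integral is dominated by its upper endpoint, where its integrand in $r$ is $\asymp F_s^2\nu(\Omega_k)^2\rho_1^{s}L_R(\rho_1)^s/\rho_1^{2}\cdot\rho_1$. This gives a contribution $\asymp F_s^2\nu(\Omega_k)^2\rho_1^{s-1}=F_s^2\nu(\Omega_k)^2R^{(1-s)/2}$, matching the nondegenerate piece. I would verify this by writing $J(s)\asymp X_R^{s(1+a_k)-1}$ and checking directly that $\rho_0^{2s-1}X_R^{s(1+a_k)-1}\asymp\rho_0^{s}\rho_1^{-1}(\rho_1/\rho_0)^{s(1+a_k)}\rho_0^{\,s}\cdots$, which amounts to confirming $\rho_0(\rho_1/\rho_0)^{a_k}\asymp1$, i.e.\ $L_R(\rho_1)\asymp1$. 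Indeed $R^{-(k-1)/k}\cdot R^{\frac{k-2}{2k}\cdot\frac{2(k-1)}{k-2}}=1$.

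\emph{Conclusion.} Summing the three ranges and absorbing single powers of $F_s$ into $F_s^2$ yields all four cases. The two sub-cases meet as follows. For $s<s_c$, the flat term dominates the nondegenerate $R^{(1-s)/2}$, because the flat exponent is larger in that range, which follows from the same identity at $s=s_c$. At $s=1$ the flat contribution is $O(1)\cdot R^{-\ldots}$ or $R^0$, and the total is $1+\log R$.
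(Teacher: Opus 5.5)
Your proposal is correct and essentially reproduces the paper's proof. It uses the same three-range split of the layer-cake integral (macroscopic, nondegenerate $\rho_1\le r\lesssim1$, and flat $\rho_0\le r\le\rho_1$) and the bound \eqref{eq:frostman-P-bound} with the minimum replaced by $F_sL_R(r)^s$ to reach \eqref{eq:flat-integral}. It also uses the same exponent bookkeeping, in which the identity $\rho_0^{2s-1}X_R^{s(1+a_k)-1}=\rho_1^{s-1}\bigl(\rho_0X_R^{a_k}\bigr)^s$ together with $\rho_0X_R^{a_k}\asymp_k1$ (equivalently $L_R(\rho_1)\asymp_k1$) collapses the supercritical and $s=1$ flat contributions to $R^{(1-s)/2}$, exactly as in the paper; only your written chain of equalities in the supercritical paragraph is garbled, while the check you actually carry out is right.
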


\begin{proof}
Split the layer-cake representation into three ranges. For $r\gtrsim1$, the contribution is $O(\nu(\Omega_k)^2)$. In the nondegenerate regime $\rho_1\le r\lesssim1$, the whole arc is active and
\[
P_R(r)\lesssim F_s\,\nu(\Omega_k)^2r^s.
\]
Since $F_s\gtrsim_k1$, this contribution is absorbed by a bound with factor $F_s^2$ and is of order
\[
\nu(\Omega_k)^2R^{(1-s)/2}
\]
if $s<1$, while for $s=1$ it is $\nu(\Omega_k)^2(1+\log R)$ after adding the macroscopic contribution.

In the flat regime $\rho_0\le r\le\rho_1$, the bound is \eqref{eq:flat-integral}. If $s<s_c$, the integral in $x$ is uniformly bounded and
\[
\rho_0^{2s-1}
=R^{\frac{k-1}{k}(1-2s)}.
\]
In this range of $s$, this power dominates the nondegenerate contribution. If $s=s_c$, the integral grows like $\log X_R\asymp_k\log R$; moreover,
\[
\frac{k-1}{k}(1-2s_c)
=
\frac{k-1}{3k-4}
=
\frac{1-s_c}{2},
\]
so the two regimes have the same power of $R$. If $s_c<s<1$, the integral is dominated by $X_R$ and simplifying the exponents again yields $R^{(1-s)/2}$. If $s=1$, the same flat bound satisfies
\[
F_1^2\nu(\Omega_k)^2\rho_0
\int_1^{X_R}
x^{-1}
\left(x^{2/(k-2)}-1\right)^{k-1}\,\mathrm dx
\lesssim_k
F_1^2\nu(\Omega_k)^2\rho_0X_R^{a_k}
\lesssim_k
F_1^2\nu(\Omega_k)^2,
\]
because $a_k=2(k-1)/(k-2)$, $\rho_0=R^{-(k-1)/k}$, and $X_R\asymp_k R^{(k-2)/(2k)}$. Thus, at the endpoint $s=1$, the flat contribution is bounded and is absorbed by the nondegenerate term of order $F_1^2\nu(\Omega_k)^2(1+\log R)$.

Finally,
\[
\int_{R^{-1/2}}^1r^{s-2}\,\mathrm dr
=
\frac{R^{(1-s)/2}-1}{1-s},
\]
and adding the macroscopic contribution gives the uniform formulation near $s=1$.
\end{proof}

\begin{remark}[Calibrations of the Frostman diagram]
The preceding formulas admit four useful limiting checks. For $k=2$, outside the degenerate regime of the theorem, the terminal angular scale is constant $R^{-1/2}$ and formally $s_c(2)=0$, in agreement with the absence of a flat regime. As $s\downarrow0$,
\[
\frac{k-1}{k}(1-2s)\longrightarrow\frac{k-1}{k},
\]
which agrees with the maximal growth imposed by the minimal cutoff $\rho_0^{-1}=R^{(k-1)/k}$. At the threshold $s=s_c(k)$,
\[
\frac{k-1}{k}(1-2s_c)
=
\frac{k-1}{3k-4}
=
\frac{1-s_c}{2},
\]
and the two powers match. Finally,
\[
\lim_{s\to1^-}
\frac{R^{(1-s)/2}-1}{1-s}
=
\frac12\log R,
\]
so, after adding the macroscopic contribution of order $1$, the uniform formulation near $s=1$ is compatible with growth $1+\log R$.
\end{remark}

\subsubsection{Sharpness of the energy bound at the spatial scale}

Before constructing the extremal examples, we record how angular densities can be realized by data on the curve. Let $\mathrm d\omega$ denote arc-length measure on
\[
\Omega_k=\mathcal N_k([0,1])\subset\mathbb S^1.
\]
If $h\ge0$ belongs to $L^1(\Omega_k,\mathrm d\omega)$, define $f$ by
\[
|f(t)|^2=h(\mathcal N_k(t))\,\kappa_k(t),
\]
where $\kappa_k$ is the curvature of $\gamma_k$. Since the Gauss map is injective and
\[
\mathrm d\omega=\kappa_k(t)\,\mathrm d\sigma(t),
\]
change of variables gives
\[
\nu_f=h\,\mathrm d\omega,
\qquad
\|f\|_2^2=\int_{\Omega_k}h(\omega)\,\mathrm d\omega.
\]
We use $s$-Ahlfors regular in the standard sense: on the relevant structural scales, the mass of a ball centered on the support is comparable to $r^s$; see Mattila~\cite{Mattila1995}. The singular Ahlfors models used below are first regarded as abstract angular measures. To realize them by data on the curve, fix a positive kernel $K_\varepsilon(\omega,\eta)$ adapted to the arc, normalized by $\int K_\varepsilon(\omega,\eta)\,\mathrm d\omega=1$, supported when $|\omega-\eta|\lesssim\varepsilon$, and bounded by $K_\varepsilon\lesssim\varepsilon^{-1}$. Define
\[
\mathrm d\nu_\varepsilon(\omega)
=
\left(\int K_\varepsilon(\omega,\eta)\,\mathrm d\nu(\eta)\right)\mathrm d\omega,
\qquad \varepsilon\ll\rho_0.
\]
This preserves total mass, including models anchored at an endpoint of the arc. If $\nu$ satisfies an upper $s$-dimensional Frostman bound with fixed structural constant, then also
\[
\nu_\varepsilon(B(\omega,r))\lesssim_s \nu(\Omega_k)r^s,
\qquad 0<r\le1,
\]
with a constant uniform in $\varepsilon$. Indeed, for $r\ge\varepsilon$ the convolution is supported in a ball of radius $O(r)$, while for $r<\varepsilon$ the smoothed density satisfies $h_\varepsilon\lesssim_s \nu(\Omega_k)\varepsilon^{s-1}$ and hence
\[
\nu_\varepsilon(B(\omega,r))
\lesssim_s \nu(\Omega_k)\varepsilon^{s-1}r
\le \nu(\Omega_k) r^s.
\]
For $0<s<1$, this regularization does not preserve a uniform lower Ahlfors bound down to scale zero: below $\varepsilon$, the mass of a ball is at most of linear order in its radius. It does preserve, with structural constants, the Ahlfors lower bounds at scales $r\gtrsim\varepsilon$: if the approximation to the identity moves mass by angular distance at most $\varepsilon$, then for $r\ge2\varepsilon$ all mass originally contained in $B(\omega,r-\varepsilon)$ remains inside $B(\omega,r)$. Since all scales used in the lower bounds of the next proposition satisfy $r\gtrsim\rho_0$ and we choose $\varepsilon\ll\rho_0$, the smoothed measures produce the same orders in $R$ for $\mathcal I_R$. Finally, the smoothed angular density is realized as $\nu_f$ by the preceding construction.

\begin{proposition}[Sharpness of the energy powers]\label{prop:frostman-sharpness}
The powers of $R$ in the energy bound of Theorem~\ref{thm:frostman-phase}, and the logarithmic growth at $s=s_c(k)$, are asymptotically sharp as $R\to\infty$ within the class of $s$-Ahlfors regular angular measures with fixed structural constants. This sharpness concerns the energy $\mathcal I_R(\nu)$; Corollary~\ref{cor:frostman-restriction} provides the corresponding upper bound for extension.
\end{proposition}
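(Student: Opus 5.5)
To prove sharpness I would construct, for each regime, an explicit $s$-Ahlfors regular angular measure $\nu$ with fixed structural constants, and bound $\mathcal I_R(\nu)$ from below by the layer-cake identity of Proposition~\ref{prop:layercake}. Lemma~\ref{lem:P-tildeP} gives $\widetilde P_R(r;\nu)\ge P_R(r/2;\nu)$, so
\[
\mathcal I_R(\nu)\gtrsim\int P_R(r;\nu)\,\frac{\mathrm dr}{r^2}.
\]
It therefore suffices to produce lower bounds for $P_R(r;\nu)$ in the flat range $\rho_0\le r\le\rho_1$ and in the nondegenerate range $\rho_1\le r\le1$. At the end I would pass to data $f$ through the smoothing $\nu_\varepsilon$ with $\varepsilon\ll\rho_0$, as described before the statement. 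Only scales $r\gtrsim\rho_0$ are used, so the lower Ahlfors bounds survive.

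The key lower bound is the following. If $\nu$ is $s$-Ahlfors regular on a set $K$ near the base point $\mathcal N_k(0)$, and $\omega\in K\cap\Omega_R(r)$, then
\[
\nu(B(\omega,r)\cap\Omega_R(r))\gtrsim\nu(\Omega_k)\,r^s
\]
as long as $r\lesssim L_R(r)$, i.e.\ the ball is not cut off by the active set. Hence
\[
P_R(r)\gtrsim\nu(\Omega_k)\,r^s\,\nu(\Omega_R(r))\gtrsim\nu(\Omega_k)^2\,r^s\min\{1,L_R(r)\}^s.
\]
The active set $\Omega_R(r)=\mathcal N_k([0,t_R(r)])$ is an arc anchored at the endpoint $\mathcal N_k(0)$. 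So I would take $K$ to be a self-similar $s$-dimensional Cantor set (or, for $s=1$, arc length) placed with its left endpoint at $\mathcal N_k(0)$. This makes $\nu(\Omega_R(r))\asymp L_R(r)^s$ for $L_R(r)\le1$.

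Substituting the exact formula \eqref{eq:active-angular-length}, and not merely the upper bound \eqref{eq:active-angular-upper}, the flat contribution becomes bounded below by \eqref{eq:flat-integral} with $F_s\asymp1$, up to restricting to the region where $r\lesssim L_R(r)$. Since $a_k>1$, this region is $x\ge x_0(k)$, and it accounts for all but a bounded piece of the range. Evaluating the integral:
\begin{itemize}
\item For $s<s_c$, restricting to $x\in[x_0,2x_0]$ gives $\gtrsim\rho_0^{2s-1}=R^{\frac{k-1}{k}(1-2s)}$.
\item For $s=s_c$, the integrand is $\asymp x^{-1}$ on $[x_0,X_R]$, which gives the factor $\log X_R\asymp\log R$ times $R^{\frac{k-1}{3k-4}}$.
\item For $s_c<s<1$, the nondegenerate range alone gives
\[
\int_{\rho_1}^1 r^{s-2}\,\mathrm dr\asymp R^{(1-s)/2}.
\]
\item For $s=1$, the same integral gives $\log R$.
\end{itemize}

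The main obstacle I expect is the step $\nu(B(\omega,r)\cap\Omega_R(r))\gtrsim r^s$. When $r$ is comparable to $L_R(r)$, the ball may stick out of the active arc. I would handle this by restricting to $x\ge x_0$ with $x_0$ large, so that $L_R(r)\ge Cr$, and by choosing $\omega$ in the inner half of $K\cap\Omega_R(r)$. Self-similarity of $K$ at the scale $L_R(r)$ guarantees this inner half carries a fixed fraction of the mass. One must also check that $L_R(r)$ crosses $1$ only near $r\asymp\rho_1$, so that the Ahlfors bounds are used only on scales $\le1$.
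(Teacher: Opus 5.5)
Your proposal is correct and follows essentially the same route as the paper: the layer-cake identity with Lemma~\ref{lem:P-tildeP}, an Ahlfors-regular measure anchored at $\mathcal N_k(0)$, centers restricted to the inner part of the active arc in the range where $L_R(r)\gg r$ to get $P_R(r;\nu)\gtrsim\nu(\Omega_k)^2r^sL_R(r)^s$, evaluation through the exact formula \eqref{eq:active-angular-length}, the nondegenerate range $\rho_1\le r\lesssim1$ for $s_c<s\le1$, and smoothing at scale $\varepsilon\ll\rho_0$. The differences are cosmetic. You use the single anchored measure in every regime, which is valid because $\Omega_R(r)=\Omega_k$ for $r\ge\rho_1$, whereas the paper places the measure on a nondegenerate subarc when $s>s_c$. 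You also treat the ball-not-cut-off issue explicitly in the subcritical case, where the paper only sketches it.
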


\begin{proof}[Model constructions]
Consider the four regimes separately.

If $s<s_c$, take an $s$-Ahlfors regular measure anchored at the normal of the flat point. At a scale $r=x\rho_0$ with fixed $x>1$, the active set has angular diameter comparable to $\rho_0$. The active mass is then $\asymp \nu(\Omega_k)\rho_0^s$, and the dyadic profile produces a contribution of order
\[
\nu(\Omega_k)^2\rho_0^{2s-1}
=
\nu(\Omega_k)^2R^{\frac{k-1}{k}(1-2s)}.
\]

In the critical case, again take an $s$-Ahlfors regular measure, with uniform upper and lower bounds, supported on a fixed initial subarc and anchored at the normal of the flat point. For constants $C\gg1$ and sufficiently small $c>0$, and
\[
C\rho_0\le r\le c\rho_1,
\]
one has $L_R(r)\gtrsim r$. Let $\omega_{\mathrm{flat}}$ be the normal of the flat point. Restrict the center in the first integration to
\[
B(\omega_{\mathrm{flat}},L_R(r)/4)\cap\operatorname{supp}\nu.
\]
After increasing $C$, every angular ball of radius $r/4$ centered in this set remains inside $\Omega_R(r)$. The lower Ahlfors regularity gives mass $\gtrsim \nu(\Omega_k)L_R(r)^s$ to the set of centers and mass $\gtrsim \nu(\Omega_k)r^s$ to each of these balls. Therefore
\[
P_R(r;\nu)
\gtrsim_{k,s}
\nu(\Omega_k)^2r^sL_R(r)^s.
\]
In the preceding range, \eqref{eq:active-angular-length} gives
\[
L_R(r)
\asymp_k
\rho_0\left(\frac r{\rho_0}\right)^{a_k},
\]
up to structural constants after increasing $C$. Hence
\[
\mathfrak q_R(r;\nu)
\gtrsim_{k,s}
\rho_0^{s(1-a_k)}r^{s(1+a_k)-1}.
\]
When $s=s_c(k)$, one has $s(1+a_k)=1$ and $s(1-a_k)=2s-1$, so each dyadic scale in this range contributes
\[
\mathfrak q_R(r;\nu)
\gtrsim_{k,s}
\rho_0^{2s-1}
=
R^{\frac{k-1}{3k-4}}.
\]
There are $\asymp_k\log R$ dyadic scales between $C\rho_0$ and $c\rho_1$, and \eqref{eq:dyadic-energy-profile} produces the logarithmic factor.

If $s_c<s<1$, place an $s$-Ahlfors regular measure on a nondegenerate subarc, where the cutoff is $\asymp R^{-1/2}$. The truncated Riesz energy has order
\[
\nu(\Omega_k)^2R^{(1-s)/2}.
\]
For $s=1$, an angular density comparable to length measure produces energy of order $\nu(\Omega_k)^2(1+\log R)$ uniformly for $R\ge1$, and hence of order $\nu(\Omega_k)^2\log R$ as $R\to\infty$.

The preceding lower bounds are first obtained with the abstract Ahlfors models. After smoothing at scale $\varepsilon\ll\rho_0$, the lower bounds used in the proof remain valid at every relevant scale $r\gtrsim\rho_0$, although for $s<1$ there is no longer a uniform lower Ahlfors bound when $r\ll\varepsilon$. The upper Frostman bound remains uniform and the smoothed measure is of the form $\nu_f$. Therefore the same orders in $R$ are attained within the class of measures realizable by data on the curve, proving the asserted energy sharpness.
\end{proof}

\begin{remark}\label{rem:Fs-dependence-open}
Estimate \eqref{eq:frostman-P-bound} retains a finer dependence than the final form of the theorem. The uniform bound obtained in the different regimes carries the factor $F_s^2$; determining the optimal dependence on $F_s$ remains open.
\end{remark}

\subsubsection{Consequence for restriction}

\begin{corollary}[Restriction under a Frostman condition]\label{cor:frostman-restriction}
Let $\iota>(k-1)/k$. If $\nu_f$ satisfies \eqref{eq:frostman-assumption}, then, for every $x_0\in\mathbb R^2$,
\[
\int_{\mathbb R^2}|E_{\gamma_k}f(x)|^4
\left(1+\frac{|x-x_0|}{R}\right)^{-\iota}\,\mathrm dx
\lesssim_{k,s,\iota}
F_s^2\|f\|_2^4\,\Phi_{k,s}(R),
\]
where
\[
\Phi_{k,s}(R)=
\begin{cases}
R^{\frac{k-1}{k}(1-2s)},&0<s<s_c(k),\\
R^{\frac{k-1}{3k-4}}(1+\log R),&s=s_c(k),\\
R^{(1-s)/2},&s_c(k)<s<1,\\
1+\log R,&s=1.
\end{cases}
\]
\end{corollary}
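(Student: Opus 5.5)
The corollary follows by composing the weighted monomial direct theorem with the Frostman energy diagram; no new analysis is needed. We first invoke Corollary~\ref{cor:weighted-direct-monomial}, valid exactly for $\iota>(k-1)/k$, to bound the weighted $L^4$ norm of $E_{\gamma_k}f$ by $C_{k,\iota}\,\mathcal E_{R,k}(f)$, uniformly in $x_0$. Next we pass from the monomial energy to the intrinsic energy on the normal arc. Lemma~\ref{lem:tau-midpoint-max}, together with the definition $\rho_R(\mathcal N_k(t))=\tau_R(t)$, gives
\[
\mathcal E_{R,k}(f)\asymp_k\mathcal I_R(\nu_f).
\]
The reason is that the midpoint cutoff $\tau_R((u+v)/2)$ is comparable to $\max\{\tau_R(u),\tau_R(v)\}$, so the two kernels are pointwise comparable. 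Moreover, $\nu_f=(\mathcal N_k)_\#(|f|^2\,\mathrm d\sigma)$ is exactly the pushforward along which $\mathcal I_R$ is integrated, by injectivity of $\mathcal N_k$.

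We then apply Theorem~\ref{thm:frostman-phase} to the finite measure $\nu=\nu_f$, which satisfies \eqref{eq:frostman-assumption} by hypothesis. This yields
\[
\mathcal I_R(\nu_f)\lesssim_{k,s}F_s^2\,\nu_f(\Omega_k)^2\,\Phi_{k,s}(R),
\]
where $\Phi_{k,s}$ coincides case by case with the four regimes of the theorem, including the logarithmic factor at $s=s_c(k)$ and at $s=1$. Since $\nu_f(\Omega_k)=\|f\|_2^2$, the right-hand side becomes $F_s^2\|f\|_2^4\,\Phi_{k,s}(R)$. Chaining the three inequalities gives the claim with implicit constant depending on $k$, $s$, and $\iota$.

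There is no genuine obstacle here. The only points needing care are bookkeeping. First, the implicit constants must not depend on $R$ or $x_0$; this holds because the weighted direct estimate and the energy comparison are uniform in both. Second, the case $\nu_f(\Omega_k)=0$, i.e.\ $f=0$ a.e., is trivial. Third, the regime $s=1$ uses the separate $F_1^2(1+\log R)$ bound from the theorem rather than the uniform formula near $s=1$. We emphasize, as the paper does, that this proves only the upper bound; the sharpness in Proposition~\ref{prop:frostman-sharpness} concerns $\mathcal I_R$ and does not transfer to the restriction inequality.
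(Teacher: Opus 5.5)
Your proposal is correct and is exactly the paper's proof: chain Corollary~\ref{cor:weighted-direct-monomial} (valid for $\iota>(k-1)/k$), the comparison $\mathcal E_{R,k}(f)\asymp_k\mathcal I_R(\nu_f)$ coming from Lemma~\ref{lem:tau-midpoint-max}, and Theorem~\ref{thm:frostman-phase}, with $\nu_f(\Omega_k)=\|f\|_2^2$. One minor precision: the change of variables for the pushforward holds for any measurable map. Injectivity of $\mathcal N_k$ is needed only so that $\rho_R$ is well defined on $\Omega_k$ through $\rho_R(\mathcal N_k(t))=\tau_R(t)$.
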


\begin{proof}
Combine Corollary~\ref{cor:weighted-direct-monomial}, the equivalence between $\mathcal E_{R,k}(f)$ and $\mathcal I_R(\nu_f)$, and Theorem~\ref{thm:frostman-phase}.
\end{proof}

Thus the energy bound from Theorem~\ref{thm:frostman-phase} yields the stated restriction estimate through the direct theorem. Proposition~\ref{prop:frostman-sharpness} establishes sharpness at the energy level; sharpness of the resulting restriction inequality is a separate question.

\printbibliography

\end{document}